\numberwithin{equation}{section}
\theoremstyle{plain}
\newtheorem{theorem}{Theorem}[section]
\theoremstyle{remark}
\newtheorem{remark}{Remark}[section]
\newtheorem{lemma}{Lemma}
\newtheorem{corollary}{Corollary}
\newcommand{\Var}{\mathrm{Var}}
\newcommand{\E}{\mathrm{E}}
\begin{document}

\begin{frontmatter}
\title{Efficient Bayesian estimation and uncertainty quantification in ordinary differential equation models}
\runtitle{Efficient Bayesian estimation in ODE models}

\begin{aug}
\author{\fnms{Prithwish} \snm{Bhaumik}\thanksref{a1,e1}\ead[label=e1,mark]{prithwish.bhaumik@utexas.edu}}
\author{\fnms{Subhashis} \snm{Ghosal}\thanksref{a2,e2}\ead[label=e2,mark]{sghosal@ncsu.edu}}
\address[a1]{Department of Statistics and Data Sciences, The University of Texas at Austin, Austin, TX 78712, USA.
\printead{e1}}

\address[a2]{Department of Statistics, North Carolina State University, Raleigh, NC 27695, USA.
\printead{e2}}


\runauthor{P. Bhaumik and S. Ghosal}

\affiliation{North Carolina State University}

\end{aug}

\begin{abstract}
Often the regression function is specified by a system of ordinary differential equations (ODEs) involving some unknown parameters. Typically analytical solution of the ODEs is not available, and hence likelihood evaluation at many parameter values by numerical solution of equations may be computationally prohibitive.  \citet{bhaumik2014bayesiant} considered a Bayesian two-step approach by embedding the model in a larger nonparametric regression model, where a prior is put through a random series based on B-spline basis functions. A posterior on the parameter is induced from the regression function by minimizing an integrated weighted squared distance between the derivative of the regression function and the derivative suggested by the ODEs. Although this approach is computationally fast, the Bayes estimator is not asymptotically efficient. In this paper we suggest a modification of the two-step method by directly considering the distance between the function in the nonparametric model and that obtained from a four stage Runge-Kutta (RK$4$) method. We also study the asymptotic behavior of the  posterior distribution of $\bm\theta$ based on an approximate likelihood obtained from an RK$4$ numerical solution of the ODEs. We establish a Bernstein-von Mises theorem for both methods which assures that Bayesian uncertainty quantification matches with the frequentist one and the Bayes estimator is asymptotically efficient.
\end{abstract}

\begin{keyword}[class=MSC]
\kwd{62J02, 62G08, 62G20, 62F12, 62F15}
\end{keyword}

\begin{keyword}
\kwd{Ordinary differential equation}
\kwd{Runge-Kutta method}
\kwd{approximate likelihood}
\kwd{Bayesian inference}
\kwd{spline smoothing}
\kwd{Bernstein-von Mises theorem}
\end{keyword}

\end{frontmatter}

\section{Introduction}
Differential equations are encountered in various branches of science such as in genetics \citep{chen1999modeling}, viral dynamics of infectious diseases
[\citet{anderson1992infectious}, \citet{nowak2000virus}], pharmacokinetics and pharmacodynamics (PKPD) \citep{gabrielsson2000pharmacokinetic}. In many cases these equations do not lead to any explicit solution. A popular example is the Lotka-Volterra equations, also known as predator-prey equations. The rates of change of the prey and predator populations are given by the equations
\begin{eqnarray}
\frac{df_{1\bm\theta}(t)}{dt}&=&\theta_1f_{1\bm\theta}(t)-\theta_2f_{1\bm\theta}(t)f_{2\bm\theta}(t),\nonumber\\
\frac{df_{2\bm\theta}(t)}{dt}&=&-\theta_3f_{2\bm\theta}(t)+\theta_4f_{1\bm\theta}(t)f_{2\bm\theta}(t),\,\,t\in[0,1],\nonumber
\end{eqnarray}
where $\bm\theta=(\theta_1,\theta_2,\theta_3,\theta_4)^T$ and $f_{1\bm\theta}(t)$ and $f_{2\bm\theta}(t)$ denote the prey and predator populations at time $t$ respectively. These models can be put in a regression model $\bm Y=\bm f_{\bm\theta}(t)+\bm\varepsilon,\,\bm\theta\in\Theta\subseteq\mathbb{R}^p$, where $\bm f_{\bm\theta}(\cdot)$ satisfies the ODE
\begin{eqnarray}
\frac{d\bm f_{\bm\theta}(t)}{dt}=\bm F(t,\bm f_{\bm\theta}(t),\bm\theta),\,t\in [0,1];\label{intro}
\end{eqnarray}
here $\bm F$ is a known appropriately smooth vector valued function and $\bm\theta$ is a parameter vector controlling the regression function.\\
\indent The nonlinear least squares (NLS) is the usual way to estimate the unknown parameters provided that the analytical solution of the ODE is available, which is not the case in most real applications. The 4-stage Runge-Kutta algorithm (RK4)  [\citet[page 134]{hairer1993solving} and \citet[page 53]{mattheij2002ordinary}] can solve \eqref{intro} numerically. The parameters can be estimated by applying NLS in the next step. \citet{xue2010sieve} studied the asymptotic properties of the estimator. They used differential evolution method \citep{storn1997differential}, scatter search method and sequential quadratic programming \citep{rodriguez2006novel} method for the NLS part and established the strong consistency, $\sqrt{n}$-consistency and asymptotic normality of the estimator. The estimator turns out to be asymptotically efficient, but this approach is computationally intensive.\\
\indent In the generalized profiling procedure \citep{ramsay2007parameter}, a linear combination of basis functions is used to obtain an approximate solution. A penalized optimization is used to estimate the coefficients of the basis functions. The estimated $\bm\theta$ is defined as the maximizer of a data dependent fitting criterion involving the estimated coefficients. The statistical properties of the estimator obtained from this approach were explored in the works of \citet{qi2010asymptotic}. This method is also asymptotically efficient, but has a high computational cost.\\
\indent \citet{varah1982spline} used a two-step procedure where the state variables are approximated by cubic spline in the first step. In the second step, the parameters are estimated by minimizing the sum of squares of difference between the non-parametrically estimated derivative and the derivatives suggested by the ODEs at the design points. Thus the ODE model is embedded in the nonparametric regression model. This method is very fast and independent of the initial or boundary conditions. \citet{brunel2008parameter} did a modification by replacing the sum of squares by a weighted integral and obtained the asymptotic normality of the estimator. \citet{gugushvili2012n} followed the approach of \citet{brunel2008parameter}, but used kernel smoothing instead of spline and established $\sqrt{n}$-consistency of the estimator. \citet{wu2012numerical} used penalized smoothing spline in the first step and numerical derivatives of the nonparametrically estimated functions. \citet{brunel2014parametric} used nonparametric smoothing and a set of orthogonality conditions to estimate the parameters. But the major drawback of the two-step estimation methods is that these are not asymptotically efficient.\\
\indent ODE models in Bayesian framework was considered in the works of \citet{gelman1996physiological}, \citet{rogers2007bayesian} and \citet{girolami2008bayesian}. They obtained an approximate likelihood by solving the ODEs numerically. Using the prior assigned on $\bm\theta$, MCMC technique was used to generate samples from the posterior. This method also has high computational complexity. \citet{campbell2012smooth} proposed the smooth functional tempering approach which utilizes the generalized profiling approach \citep{ramsay2007parameter} and the parallel tempering algorithm. \citet{jaeger2009functional} also used the generalized profiling in Bayesian framework. \citet{bhaumik2014bayesiant} considered the Bayesian analog of the two-step method suggested by \citet{brunel2008parameter}, putting prior on the coefficients of the B-spline basis functions and induced a posterior on $\Theta$. They established a Bernstein-von Mises theorem for the posterior distribution of $\bm\theta$ with $n^{-1/2}$ contraction rate.\\
\indent In this paper we propose two separate approaches. We use Gaussian distribution as the working model for error, although the true distribution may be different. The first approach involves assigning a direct prior on $\bm\theta$ and then constructing the posterior of $\bm\theta$ using an approximate likelihood function constructed using the approximate solution $\bm f_{\bm\theta,r}(\cdot)$ obtained from RK$4$. Here $r$ is the number of grid points used. When $r$ is sufficiently large, the approximate likelihood is expected to behave like the actual likelihood. We call this method Runge-Kutta sieve Bayesian (RKSB) method. In the second approach we define $\bm\theta$ as $\arg\min_{\bm\eta\in\bm\Theta}\int_0^1\|\bm\beta^T\bm N(\cdot)-\bm f_{\bm\eta,r}(\cdot)\|^2w(t)dt$ for an appropriate weight function $w(\cdot)$ on $[0,1]$, where the posterior distribution of $\bm\beta$ is obtained in the nonparametric spline model and $\bm N(\cdot)$ is the B-spline basis vector. We call this approach Runge-Kutta two-step Bayesian (RKTB) method. Thus, this approach is similar in spirit to \citet{bhaumik2014bayesiant}. Similar to \citet{bhaumik2014bayesiant}, prior is assigned on the coefficients of the B-spline basis and the posterior of $\bm\theta$ is induced from the posterior of the coefficients. But the main difference lies in the way of extending the definition of parameter. Instead of using deviation from the ODE, we consider the distance between function in the nonparametric model and RK$4$ approximation of the model. \citet{sujit2010statistical} considered Euler's approximation to construct the approximate likelihood and then drew posterior samples. In the same paper they also provided a non-Bayesian method by estimating $\bm\theta$ by minimizing the sum of squares of the difference between the spline fitting and the Euler approximation at the grid points. However they did not explore the theoretical aspects of those methods. We shall show both RKSB and RKTB lead to Bernstein-von Mises Theorem with dispersion matrix inverse of Fisher information and hence both the proposed Bayesian methods are asymptotically efficient. This was not the case for the two step-Bayesian approach \citep{bhaumik2014bayesiant}. Bernstein-von Mises Theorem implies that credible intervals have asymptotically correct frequentist coverage. The computational cost of the two-step Bayesian method \citep{bhaumik2014bayesiant} is the least, RKTB is more computationally involved and RKSB is the most computationally expensive.\\
\indent The paper is organized as follows. Section 2 contains the description of the notations and some preliminaries of Runge-Kutta method. The model assumptions and prior specifications are given in Section $3$. The main results are given in Section $4$. In Section 5 we carry out a simulation study. Proofs of the main results are given in Section 6. Section 7 contains the proofs of the technical lemmas. The appendix is provided in Section 8.
\section{Notations and preliminaries}
We describe a set of notations to be used in this paper. Boldfaced letters are used to denote vectors and matrices. The identity matrix of order $p$ is denoted by $\bm I_p$. We use the symbols $\mathrm{maxeig}(\bm A)$ and $\mathrm{mineig}(\bm A)$ to denote the maximum and minimum eigenvalues of the matrix $\bm A$ respectively.
The $L_2$ norm of a vector $\bm x\in\mathbb{R}^p$ is given by $\|\bm x\|=\left(\sum_{i=1}^px_i^2\right)^{1/2}$. The notation $f^{(r)}(\cdot)$ stands for the $r^{th}$ order derivative of a function $f(\cdot)$, that is, $f^{(r)}(t)=\frac{d^r}{dt^r}f(t)$. For the function $\bm\theta\mapsto f_{\bm\theta}(x)$, the notation $\dot{f}_{\bm\theta}(x)$ implies $\frac{\partial}{\partial\bm\theta}f_{\bm\theta}(x)$. Similarly, we denote $\ddot{f}_{\bm\theta}(x)=\frac{\partial^2}{\partial\bm\theta^2}f_{\bm\theta}(x)$. A vector valued function is represented by the boldfaced symbol $\bm f(\cdot)$. We use the notation $f(\bm x)$ to denote the vector $(f(x_1),\ldots,f(x_p))^T$ for a real-valued function $f:[0,1]\rightarrow\mathbb{R}$ and a vector $\bm x\in\mathbb{R}^p$. Let us define $\|\bm f\|_g=(\int_0^1\|\bm f(t)\|^2g(t)dt)^{1/2}$ for $\bm f:[0,1]\mapsto\mathbb{R}^p$ and $g:[0,1]\mapsto[0,\infty)$. The weighted inner product with the corresponding weight function $g(\cdot)$ is denoted by $\langle\cdot,\cdot\rangle_g$. For numerical sequences $a_n$ and $b_n$, both $a_n=o(b_n)$ and $a_n\ll b_n$ mean $a_n/b_n\rightarrow0$ as $n\rightarrow\infty$. Similarly we define $a_n\gg b_n$. The notation $a_n=O(b_n)$ is used to indicate that $a_n/b_n$ is bounded. The notation $a_n\asymp b_n$ stands for both $a_n=O(b_n)$ and $b_n=O(a_n)$, while $a_n\lesssim b_n$ means $a_n=O(b_n)$. The symbol $o_P(1)$ stands for a sequence of random vectors converging in $P$-probability to zero, whereas $O_P(1)$ stands for a stochastically bounded sequence of random vectors. Given a sample $\{X_i:\,i=1,\ldots,n\}$ and a measurable function $\psi(\cdot)$, we define $\mathbb{P}_n\psi=n^{-1}\sum_{i=1}^n\psi(X_i)$. The symbols $\E(\cdot)$ and $\Var(\cdot)$ stand for the mean and variance respectively of a random variable, or the mean vector and the variance-covariance matrix of a random vector. We use the notation $\mathbb{G}_n\psi$ to denote $\sqrt{n}\left(\mathbb{P}_n\psi-\E\psi\right)$. For a measure $P$, the notation $P^{(n)}$ implies the joint measure of a random sample $X_1,\ldots,X_n$ coming from the distribution $P$. Similarly we define $p$ and $p^{(n)}$ for the corresponding densities. The total variation distance between the probability measures $P$ and $Q$ defined on $\mathbb{R}^p$ is given by $\|P-Q\|_{TV}=\sup_{B\in\mathscr{R}^p}|P(B)-Q(B)|$, $\mathscr{R}^p$ being the Borel $\sigma$-field on $\mathbb R^p$. Given an open set $E$, the symbol $C^m(E)$ stands for the class of functions defined on $E$ having first $m$ continuous partial derivatives with respect to its arguments. For a set $A$, the notation $\mathrm{l}\!\!\!1\{A\}$ stands for the indicator function for belonging to $A$. The symbol $:=$ means equality by definition. For two real numbers $a$ and $b$, we use the notation $a\wedge b$ to denote the minimum of $a$ and $b$. Similarly we denote $a\vee b$ to be the maximum of $a$ and $b$.\\
\indent Given $r$ equispaced grid points $a_1=0,a_2,\ldots,a_r$ with common difference $h$ and an initial condition $\bm f_{\bm\theta}(0)=\bm y_0$, Euler's method \citep[page 9]{henrici1962discrete} computes the approximate solution as $\bm f_{\bm\theta,r}(a_{k+1})=\bm f_{\bm\theta,r}(a_{k})+h\bm F(a_k,\bm f_{\bm\theta,r}(a_{k}),\bm\theta )$ for $k=1,2,\ldots,r-1$. The RK$4$ method \citep[page 68]{henrici1962discrete} is an improvement over Euler's method. Let us denote
\begin{eqnarray}
\bm k_1&=&\bm F(a_k,\bm f_{\bm\theta,r}(a_{k}),\bm\theta ),\nonumber\\
\bm k_2&=&\bm F(a_k+h/2,\bm f_{\bm\theta,r}(a_{k})+h/2\bm k_1,\bm\theta ),\nonumber\\
\bm k_3&=&\bm F(a_k+h/2,\bm f_{\bm\theta,r}(a_{k})+h/2\bm k_2,\bm\theta ),\nonumber\\
\bm k_4&=&\bm F(a_k+h,\bm f_{\bm\theta,r}(a_{k})+h\bm k_3,\bm\theta ).\nonumber
\end{eqnarray}
Then we obtain $\bm f_{\bm\theta,r}(a_{k+1})$ from $\bm f_{\bm\theta,r}(a_{k})$ as $\bm f_{\bm\theta,r}(a_{k+1})=\bm f_{\bm\theta,r}(a_{k})+h/6(\bm k_1+2\bm k_2+2\bm k_3+\bm k_4)$. By the proof of Theorem 3.3 of \citet[page 124]{henrici1962discrete}, we have
\begin{eqnarray}
\sup_{t\in[0,1]}\|\bm f_{\bm\theta}(t)-\bm f_{\bm\theta,r}(t)\|=O(r^{-4}),\,
\sup_{t\in[0,1]}\left\|\dot{\bm f}_{\bm\theta}(t)-\dot{\bm f}_{\bm\theta,r}(t)\right\|=O(r^{-4}).\label{rk4}
\end{eqnarray}

\section{Model assumptions and prior specifications}
Now we formally describe the model. For the sake of simplicity we assume the response to be one dimensional. The extension to the multidimensional case is straight forward. The proposed model is given by
\begin{eqnarray}
Y_{i}=f_{\bm\theta}(X_i)+\varepsilon_{i},\,i=1,\ldots,n,\label{prop}
\end{eqnarray}
where $\bm\theta\subseteq\Theta$, which is a compact subset of $\mathbb{R}^p$. The function $ f_{\bm\theta}(\cdot)$ satisfies the ODE given by
\begin{eqnarray}
\frac{df_{\bm\theta}(t)}{dt}=F(t,f_{\bm\theta}(t),\bm\theta),\,t\in[0,1].\label{diff}
\end{eqnarray}
Let for a fixed $\bm\theta$, $F\in C^{m-1}((0,1),\mathbb{R})$ for some integer $m\geq 1$. Then, by successive differentiation we have $f_{\bm\theta}\in C^m((0,1))$. By the implied uniform continuity, the function and its several derivatives can be uniquely extended to continuous functions on $[0,1]$. We also assume that $\bm\theta\mapsto f_{\bm\theta}(x)$ is continuous in $\bm\theta$.
The true regression function $f_0(\cdot)$ does not necessarily lie in $\{f_{\bm\theta}:\bm\theta\in\Theta\}$. We assume that $f_0\in C^m([0,1])$.
Let $\varepsilon_{i}$ are identically and independently distributed with mean zero and finite moment generating function for $i=1,\ldots,n$. Let the common variance be $\sigma^2_0$. We use $N(0,\sigma^2)$ as the working model for the error, which may be different from the true distribution. We treat $\sigma^2$ as an unknown parameter and assign an inverse gamma prior on $\sigma^2$ with shape and scale parameters $a$ and $b$ respectively. Additionally it is assumed that $X_i\stackrel{iid}\sim G$ with density $g$. The approximate solution to \eqref{intro} is given by $f_{\theta,r}$, where $r=r_n$ is the number of grid points, which is chosen so that
\begin{eqnarray}
r_n\gg n^{1/8}.\label{grid}
\end{eqnarray}
Let us denote $\bm Y=(Y_1,\ldots,Y_n)^T$ and $\bm X=(X_1,\ldots,X_n)^T$. The true joint distribution of $(X_i,\varepsilon_i)$ is denoted by $P_0$. Now we describe the two different approaches of inference on $\bm\theta$ used in this paper.
\subsection{Runge-Kutta Sieve Bayesian Method (RKSB)}
For RKSB we denote $\bm\gamma=(\bm\theta,\sigma^2)$. The approximate likelihood of the sample $\{(X_i,Y_{i,}):\,i=1,\ldots,n\}$ is given by $L^*_n({\bm\gamma})=\prod_{i=1}^np_{\bm\gamma,n}(X_i,Y_{i,})$,
where
\begin{eqnarray}
p_{\bm\gamma,n}(X_i, Y_i)=(\sqrt{2\pi}\sigma)^{-1}\exp\{-(2\sigma^2)^{-1}| Y_{i}- f_{\bm\theta,r_n}(X_i)|^2\}g(X_i).\label{approx}
\end{eqnarray}
We also denote
\begin{eqnarray}
p_{\bm\gamma}(X_i, Y_i)=(\sqrt{2\pi}\sigma)^{-1}\exp\{-(2\sigma^2)^{-1}|Y_{i}-f_{\bm\theta}(X_i)|^2\}g(X_i).\label{actual}
\end{eqnarray}
The true parameter $\bm\gamma_0:=(\bm\theta_0,\sigma^2_*)$ is defined as
\begin{eqnarray}
\bm\gamma_0=\arg\max_{\bm\gamma}P_0\log p_{\bm\gamma},\nonumber
\end{eqnarray}
which takes into account the situation when $f_{\bm\theta_0}$ is the true regression function, $\bm\theta_0$ being the true parameter. We denote by $\ell_{\bm\gamma}$ and $\ell_{\bm\gamma,n}$ the log-likelihoods with respect to \eqref{actual} and \eqref{approx} respectively. We make the following assumptions.

(A1) The parameter vector $\bm\gamma_0$ is the unique maximizer of the right hand side above.

(A2) The sub-matrix of the Hessian matrix of $-P_0\log p_{\bm\gamma}$ at $\bm\gamma=\bm\gamma_0$ given by
\begin{eqnarray}
\int_0^1\left(\dot{f}^T_{\bm\theta_0}(t)\dot{f}_{\bm\theta_0}(t)-\frac{\partial}{\partial\bm\theta}\left(\dot{ f}^T_{\bm\theta}(t)\left(f_0(t)-f_{\bm\theta_0}(t)\right)\right)\Big{|}_{\bm\theta=\bm\theta_0}\right)g(t)dt
\end{eqnarray}
is positive definite.

(A3) The prior measure on $\Theta$ has a Lebesgue-density continuous and positive on a neighborhood of $\bm\theta_0$.

(A4) The prior distribution of $\bm\theta$ is independent of that of $\sigma^2$.

By (A1) we get
\begin{eqnarray}
\int_0^1\dot{f}^T_{\bm\theta_0}(t)\left(f_0(t)-f_{\bm\theta_0}(t)\right)g(t)dt&=&\bm 0,\nonumber\\
\sigma^2_*&=&\sigma^2_0+\int_0^1|f_0(t)-f_{\bm\theta_0}(t)|^2g(t)dt.\label{deriv_10}
\end{eqnarray}
The joint prior measure of $\bm\gamma$ is denoted by $\Pi$ with corresponding density $\pi$. We obtain the posterior of $\bm\gamma$ using the approximate likelihood given by \eqref{approx}.
\begin{remark}
In the RKSB method the space of densities induced by the RK4 numerical solution of the ODEs approaches the space of actual densities as the sample size $n$ goes to infinity. This justifies the use of the term ``sieve'' in ``RKSB''.
\end{remark}

\begin{remark}
The assumptions (A1) and (A2) are necessary to prove the convergence of the Bayes estimator of $\bm\gamma$ to the true value $\bm\gamma_0$. These are usually satisfied in most practical situations for example the Lotka-Volterra equations considered in the simulation study. When the true regression function is the solution of the ODE, the Hessian matrix becomes $\int_0^1\dot{f}^T_{\bm\theta_0}(t)\dot{f}_{\bm\theta_0}(t)g(t)dt$ which is positive definite unless the components of $\dot{f}^T_{\bm\theta_0}$ as is the case in our simulation study.
\end{remark}
\subsection{Runge-Kutta Two-step Bayesian Method (RKTB)}
In the RKTB approach, the proposed model is embedded in nonparametric regression model
\begin{eqnarray}
\bm Y=\bm X_n\bm\beta+\bm\varepsilon,\label{np}
\end{eqnarray}
where $\bm X_n=(\!(N_{j}(X_i))\!)_{1\leq i\leq n,1\leq j\leq k_n+m-1}$, $\{N_{j}(\cdot)\}_{j=1}^{k_n+m-1}$ being the B-spline basis functions of order $m$ with $k_n-1$ interior knots $0<\xi_1<\xi_2<\cdots<\xi_{k_n-1}<1$ chosen to satisfy the pseudo-uniformity criteria:
\begin{eqnarray}
\max_{1\leq i\leq k_n-1}\left|\xi_{i+1}-2\xi_i+\xi_{i-1}\right|=o\left(k^{-1}_n\right),\,\max_{1\leq i\leq k_n-1}\left|\xi_{i}-\xi_{i-1}\right|/\min_{1\leq i\leq k_n-1}\left|\xi_{i}-\xi_{i-1}\right|\leq M\nonumber\\\label{pseudo}
\end{eqnarray}
for some constant $M>0$. Here $\xi_0$ and $\xi_{k_n}$ are defined as $0$ and $1$ respectively. The criteria \eqref{pseudo} is required to apply the asymptotic results obtained in \citet{zhou1998local} where they mention the similar criteria in equation (3) of that paper. We assume for a given $\sigma^2$
\begin{equation}
\bm\beta\sim N_{k_n+m-1}(\bm 0,\sigma^2n^{2}k^{-1}_nI_{k_n+m-1}).\label{prior}
\end{equation}
Simple calculation yields the conditional posterior distribution for $\bm\beta$ given $\sigma^2$ as
\begin{eqnarray}
N_{k_n+m-1}\left({\left({\bm X^T_n\bm X_n}+\frac{k_n}{n^2}I_{k_n+m-1}\right)}^{-1}{\bm X^T_n\bm Y},\sigma^2{\left({\bm X^T_n\bm X_n}+\frac{k_n}{n^2}I_{k_n+m-1}\right)}^{-1}\right).\label{posterior}
\end{eqnarray}
By model \eqref{np}, the expected response at a point $t\in[0,1]$ is given by $\bm\beta^T\bm N(t)$, where $\bm N(\cdot)=(N_{1}(\cdot),\ldots,N_{k_n+m-1}(\cdot))^T$.
Let us denote for a given parameter $\bm\eta$
\begin{eqnarray}
R_{f,n}(\bm\eta)&=&\left\{\int_0^1|f(t)-f_{\eta,r_n}(t)|^2g(t)dt\right\}^{1/2},\nonumber\\
R_{f_0}(\bm\eta)&=&\left\{\int_0^1|f_0(t)-f_{\eta}(t)|^2g(t)dt\right\}^{1/2},\nonumber
\end{eqnarray}
where $f(t)=\bm\beta^T\bm N(t)$. Now we define $\bm\theta=\arg\min_{\bm\eta\in\Theta}R_{ f,n}(\bm\eta)$ and induce posterior distribution on $\Theta$ through the posterior of $\bm\beta$ given by \eqref{posterior}. Also let us define $\bm\theta_0=\arg\min_{\bm\eta\in\Theta}R_{f_0}(\bm\eta)$. Note that this definition of $\bm\theta_0$ takes into account the case when $f_{\bm\theta_0}$ is the true regression function with corresponding true parameter $\bm\theta_0$.
We use the following standard assumptions.

(A5) For all $\epsilon>0$,
\begin{equation}
\inf_{\bm\eta:\|\bm\eta-\bm\theta_0\|\geq\epsilon}R_{f_0}(\bm\eta)>R_{f_0}(\bm\theta_0).\label{assmp}
\end{equation}

(A6) The matrix
\begin{eqnarray}
\bm J_{\bm\theta_0}=-\int_0^1\ddot f_{\bm\theta_0}(t)(f_0(t)- f_{\theta_0}(t))g(t)dt
+\int_0^1\left(\dot f_{\bm\theta_0}(t)\right)^T\left(\dot f_{\bm\theta_0}(t)\right)g(t)dt\nonumber
\end{eqnarray}
is nonsingular.

\begin{remark}
The assumption (A5) implies that $\bm\theta_0$ is a well-separated point of minima of $R_{f_0}(\cdot)$ which is needed to prove the convergence of the posterior distribution of $\bm\theta$ to the true value $\bm\theta_0$. A similar looking assumption appears in the argmax theorem used to show consistency of M-estimators and is a stronger version of the condition of uniqueness of the location of minimum.
\end{remark}

\begin{remark}
The matrix $\bm J_{\bm\theta_0}$ is usually non-singular specially in the case when the true regression function satisfies the ODE since then the expression of $\bm J_{\bm\theta_0}$ becomes $\int_0^1\dot{f}^T_{\bm\theta_0}(t)\dot{f}_{\bm\theta_0}(t)g(t)dt$ which is usually positive definite.
\end{remark}
\section{Main results}
Our main results are given by Theorems $4.1$ and $4.2$.
\begin{theorem}
Let the posterior probability measure related to RKSB be denoted by $\Pi_n$. Then posterior of $\bm\gamma$ contracts at $\bm\gamma_0$ at the rate $n^{-1/2}$ and
\begin{eqnarray}
\left\|\Pi_n\left(\sqrt{n}(\bm\gamma-\bm\gamma_0)\in\cdot|\bm X,\bm Y\right)-\bm N(\bm\Delta_{n,\bm\gamma_0},\sigma^2_*\bm V^{-1}_{\bm\gamma_0})\right\|_{TV}=o_{P_0}(1),\nonumber
\end{eqnarray}
where $\bm V_{\bm\gamma_0}=\left({\begin{array}{cc}{\sigma^{-2}_*}{\bm V_{\theta_0}}&\bm 0\\\bm 0&{\sigma^{-4}_*}/2\end{array}}\right)$
with $$\bm V_{\bm\theta_0}=\int_0^1\left(\dot{f}^T_{\bm\theta_0}(t)\dot{f}_{\bm\theta_0}(t)-\frac{\partial}{\partial\bm\theta}\left(\dot{ f}^T_{\bm\theta}(t)\left(f_0(t)-f_{\bm\theta_0}(t)\right)\right)\Big{|}_{\bm\theta=\bm\theta_0}\right)g(t)dt$$ and $\bm\Delta_{n,\bm\gamma_0}=\bm V^{-1}_{\bm\gamma_0}\mathbb{G}_n\dot\ell_{\bm\gamma_0,n}$.
\end{theorem}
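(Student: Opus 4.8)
The plan is to read the statement as a Bernstein--von Mises theorem under misspecification---the true error law need not be Gaussian and $f_0$ need not lie in $\{f_{\bm\theta}\}$---with the extra wrinkle that inference uses the \emph{approximate} likelihood built from the RK$4$ surrogate $f_{\bm\theta,r_n}$ rather than the exact solution $f_{\bm\theta}$. I would therefore split the work into two reductions. First, show that replacing the ideal log-likelihood $\ell_{\bm\gamma}$ by $\ell_{\bm\gamma,n}$ is harmless at the $\sqrt n$ scale, exploiting the RK$4$ bound \eqref{rk4} together with the grid calibration \eqref{grid}. Second, run the usual consistency-plus-LAN machinery for the ideal likelihood and transport the conclusion back through the first reduction.

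For consistency I would establish a uniform law of large numbers for $\mathbb{P}_n\log p_{\bm\gamma,n}$ over the compact set $\Theta$ and a compact range of $\sigma^2$. Because $\sup_{\bm\theta}\sup_t|f_{\bm\theta}(t)-f_{\bm\theta,r_n}(t)|=O(r_n^{-4})=o(1)$ by \eqref{rk4}, the approximate criterion converges uniformly to $P_0\log p_{\bm\gamma}$, whose unique maximizer is $\bm\gamma_0$ by (A1). Constructing tests that separate $\bm\gamma_0$ from $\{\bm\gamma:\|\bm\gamma-\bm\gamma_0\|\ge\epsilon\}$ and using the prior positivity and product structure (A3)--(A4), the posterior concentrates on shrinking neighborhoods of $\bm\gamma_0$; the local curvature (A2) then upgrades this to the $n^{-1/2}$ rate and keeps the rescaled posterior of $\sqrt n(\bm\gamma-\bm\gamma_0)$ from leaking to infinity.

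The heart of the argument is a local asymptotic normality expansion of $\ell_{\bm\gamma_0+\bm h/\sqrt n,n}-\ell_{\bm\gamma_0,n}$. The key point is that the approximate score process and curvature coincide with the ideal ones up to $o_{P_0}(1)$: the per-observation increment $\dot\ell_{\bm\gamma_0,n}-\dot\ell_{\bm\gamma_0}$ is $O(r_n^{-4})$ uniformly (using the companion $r^{-4}$ bound for the $\bm\theta$-sensitivities of the solution), so its centered empirical process has variance $O(r_n^{-8})$ and is $O_{P_0}(r_n^{-4})=o_{P_0}(1)$, while $\sqrt n$ times its $P_0$-mean is $O(\sqrt n\,r_n^{-4})=o(1)$ by \eqref{grid}; the Hessian increment is controlled identically. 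One then computes $\bm V_{\bm\gamma_0}=-P_0\ddot\ell_{\bm\gamma_0}$ by hand: the $\bm\theta\bm\theta$ block equals $\sigma_*^{-2}\bm V_{\bm\theta_0}$ with $\bm V_{\bm\theta_0}$ as in (A2), the $\sigma^2\sigma^2$ entry equals $\sigma_*^{-4}/2$ via $\E_0(Y-f_{\bm\theta_0}(X))^2=\sigma_*^2$ from \eqref{deriv_10}, and the cross term vanishes because $\int_0^1\dot f_{\bm\theta_0}(f_0-f_{\bm\theta_0})g=\bm 0$, again from \eqref{deriv_10}; this yields exactly the block-diagonal $\bm V_{\bm\gamma_0}$ of the statement.

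Combining consistency, the LAN expansion, prior positivity, and nonsingularity of $\bm V_{\bm\gamma_0}$, I would invoke the misspecified Bernstein--von Mises theorem to conclude that the posterior of $\sqrt n(\bm\gamma-\bm\gamma_0)$ converges in total variation to the stated Gaussian centered at $\bm\Delta_{n,\bm\gamma_0}=\bm V_{\bm\gamma_0}^{-1}\mathbb{G}_n\dot\ell_{\bm\gamma_0,n}$. The main obstacle I anticipate is precisely the interplay between the RK$4$ error and the parametric scale: every remainder term is a product of an $O(r_n^{-4})$ quantity with an empirical average, and all of these must be shown to be $o_{P_0}(1)$ after multiplication by powers of $\sqrt n$---this is exactly what \eqref{grid} is calibrated for, forcing $n^{1/2}r_n^{-4}\to0$ and $n\,r_n^{-8}\to0$. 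A secondary difficulty is that, because of misspecification, no Fisher-information identity is available, so the curvature $\bm V_{\bm\gamma_0}$ and the score variance genuinely differ and the former must be evaluated directly as above; one must also verify the $r^{-4}$ accuracy of the $\bm\theta$-derivatives of the RK$4$ solution, which follows by applying \eqref{rk4} to the sensitivity equations obtained by differentiating \eqref{diff} in $\bm\theta$.
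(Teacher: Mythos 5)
Your proposal follows essentially the same route as the paper: verify the two conditions of Kleijn and van der Vaart's misspecified Bernstein--von Mises theorem for the RK$4$-based likelihood, namely a stochastic LAN expansion (using the $O(r_n^{-4})$ bounds together with the grid condition $r_n\gg n^{1/8}$, so that $\sqrt{n}\,r_n^{-4}\to 0$) and $\sqrt{n}$-rate posterior concentration obtained from tests plus prior mass of misspecified Kullback--Leibler neighborhoods, which is exactly what the paper does in Lemmas 2--5 before invoking Theorem 2.1 of Kleijn and van der Vaart. The one step you assume rather than prove is the restriction to a compact range of $\sigma^2$ (the inverse-gamma prior has unbounded support), which the paper handles first in Lemma 1 by showing that the posterior of $\sigma^2$ concentrates on a fixed compact set, after which the testing and LAN arguments are carried out on $\Theta\times U$.
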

Since $\bm\theta$ is a sub-vector of $\bm\gamma$, we get Bernstein-von Mises Theorem for the posterior distribution of $\sqrt{n}(\bm\theta-\bm\theta_0)$, the mean and dispersion matrix of the limiting Gaussian distribution being the corresponding sub-vector and sub-matrix of $\bm\Delta_{n,\bm\gamma_0}$ and $\sigma^2_*\bm V^{-1}_{\bm\gamma_0}$ respectively. We also get the following important corollary.
\begin{corollary}
When the regression model \eqref{prop} is correctly specified and also the error is Gaussian, the Bayes estimator based on $\Pi_n$ is asymptotically efficient.
\end{corollary}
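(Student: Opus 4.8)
The plan is to deduce efficiency from the Bernstein--von Mises conclusion of Theorem 4.1 by identifying $\bm V_{\bm\gamma_0}$ with the Fisher information of the correctly specified Gaussian model, so that the limiting posterior dispersion is the inverse Fisher information. First I would use the hypothesis that \eqref{prop} is correctly specified, i.e.\ $f_0=f_{\bm\theta_0}$, so that $f_0(t)-f_{\bm\theta_0}(t)\equiv 0$. By \eqref{deriv_10} this forces $\sigma^2_*=\sigma^2_0$, and the bias term in $\bm V_{\bm\theta_0}$ (which carries the factor $f_0-f_{\bm\theta_0}$) drops out, leaving $\bm V_{\bm\theta_0}=\int_0^1\dot f^T_{\bm\theta_0}(t)\dot f_{\bm\theta_0}(t)\,g(t)\,dt$.

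Next I would compute the Fisher information of the exact model \eqref{actual} at $\bm\gamma_0$. From the per-observation scores $\partial_{\bm\theta}\ell_{\bm\gamma}=\sigma^{-2}(Y-f_{\bm\theta}(X))\dot f^T_{\bm\theta}(X)$ and $\partial_{\sigma^2}\ell_{\bm\gamma}=-\tfrac12\sigma^{-2}+\tfrac12\sigma^{-4}(Y-f_{\bm\theta}(X))^2$, two facts control the variances under $P_0$ at $\bm\gamma_0$: the conditional identity $\E[(Y-f_{\bm\theta_0}(X))^2\mid X]=\sigma^2_0$ available under correct specification, and the vanishing of the odd central moments of the Gaussian error, which kills the $\bm\theta$--$\sigma^2$ cross information. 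This yields $I(\bm\gamma_0)=\mathrm{diag}\bigl(\sigma^{-2}_0\bm V_{\bm\theta_0},\,\sigma^{-4}_0/2\bigr)$, which is exactly $\bm V_{\bm\gamma_0}$; equivalently, under correct specification the information equality gives $\bm V_{\bm\gamma_0}=-P_0\ddot\ell_{\bm\gamma_0}=\Var_{P_0}(\dot\ell_{\bm\gamma_0})=I(\bm\gamma_0)$, and the limiting dispersion $\bm V^{-1}_{\bm\gamma_0}$ is the inverse Fisher information.

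With the information identified I would return to Theorem 4.1 and read off the influence function of the posterior mean. The centering $\bm\Delta_{n,\bm\gamma_0}=\bm V^{-1}_{\bm\gamma_0}\mathbb{G}_n\dot\ell_{\bm\gamma_0,n}$ is built from the RK4 surrogate score $\dot\ell_{\bm\gamma_0,n}$ rather than the exact score $\dot\ell_{\bm\gamma_0}$, so I must show the two agree up to $o_{P_0}(1)$. Writing $\psi_n=\dot\ell_{\bm\gamma_0,n}-\dot\ell_{\bm\gamma_0}$, the bounds \eqref{rk4} give $\|\psi_n\|\lesssim r_n^{-4}$ times factors with finite moments (the error $\varepsilon_i$ has a finite moment generating function), so $\E_{P_0}\psi_n^2=O(r_n^{-8})$ and hence $\mathbb{G}_n\psi_n=o_{P_0}(1)$; since $\mathbb{G}_n$ recenters, $\mathbb{G}_n\dot\ell_{\bm\gamma_0,n}=\mathbb{G}_n\dot\ell_{\bm\gamma_0}+\mathbb{G}_n\psi_n=\mathbb{G}_n\dot\ell_{\bm\gamma_0}+o_{P_0}(1)$. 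Here the grid condition \eqref{grid}, $r_n\gg n^{1/8}$, is what guarantees $r_n^{-4}\to 0$ fast enough; it also controls the deterministic discrepancy $\sqrt n\,\E_{P_0}\dot\ell_{\bm\gamma_0,n}=O(\sqrt n\,r_n^{-4})=o(1)$. Consequently the posterior of $\sqrt n(\bm\gamma-\bm\gamma_0)$ is asymptotically $N\bigl(I(\bm\gamma_0)^{-1}\mathbb{G}_n\dot\ell_{\bm\gamma_0},\,I(\bm\gamma_0)^{-1}\bigr)$, the posterior mean $\hat{\bm\gamma}_n$ (the Bayes estimator under squared--error loss) satisfies $\sqrt n(\hat{\bm\gamma}_n-\bm\gamma_0)=I(\bm\gamma_0)^{-1}\mathbb{G}_n\dot\ell_{\bm\gamma_0}+o_{P_0}(1)\rightsquigarrow N(\bm 0,I(\bm\gamma_0)^{-1})$, and by the H\'ajek--Le Cam convolution theorem this attains the efficiency bound.

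I expect the main obstacle to be the passage from the approximate to the exact likelihood rather than the (routine) information calculation: one must verify not only that the scores agree to order $o_{P_0}(n^{-1/2})$ but that the RK4 error contributes negligibly to the second moments, and hence to the information, uniformly in a neighbourhood of $\bm\gamma_0$ where the limit is taken. This is precisely where \eqref{grid} is used in an essential way. A secondary technical point is justifying the interchange of expectation and differentiation in the score and information computations, which the finite moment generating function of $\varepsilon_i$ and the assumed smoothness of $\bm\theta\mapsto f_{\bm\theta}$ make legitimate.
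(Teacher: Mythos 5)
Your proposal is correct and follows essentially the same route as the paper: under correct specification the bias term in $\bm V_{\bm\theta_0}$ vanishes and $\sigma^2_*=\sigma^2_0$, so $\bm V_{\bm\gamma_0}$ coincides with the Fisher information $\bm I(\bm\gamma_0)$ of the Gaussian model, and efficiency then follows from Theorem 4.1. The additional care you take in replacing the RK4 surrogate score $\dot\ell_{\bm\gamma_0,n}$ by the exact score in the centering $\bm\Delta_{n,\bm\gamma_0}$ is sound but not a new idea relative to the paper --- that approximation step is exactly what Lemma 2 establishes via \eqref{rk4} and the grid condition \eqref{grid}, and the paper's corollary simply leans on it implicitly.
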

Let us denote $\bm C(t)=\bm J_{\bm\theta_0}^{-1}\left(\dot f_{\bm\theta_0}(t)\right)^T$ and $\bm H^T_n=\int_0^1\bm C(t)\bm N^T(t)g(t)dt$. Note that $\bm C(t)$ is a $p$-component vector. Also, we denote the posterior probability measure of RKTB by $\Pi_n^*$. Now we have the following result.

\begin{theorem}\label{bvm_RKTB}
Let
\begin{eqnarray}
\bm\mu_n&=&\sqrt{n}\bm H_{n}^T\left(\bm X^T_n\bm X_n\right)^{-1}\bm X^T_n\bm Y-\sqrt{n}\int_0^1\bm C(t)f_0(t)g(t)dt,\nonumber\\
\bm\Sigma_n&=&n\bm H_{n}^T\left(\bm X^T_n\bm X_n\right)^{-1}\bm H_{n}\nonumber
\end{eqnarray}
and $\bm B=\left(\!\left(\left<C_{k}(\cdot),C_{k'}(\cdot)\right>_g\right)\!\right)_{k,k'=1,\ldots,p}$. 
If $\bm B$ is non-singular, then for $m\geq3$ and $n^{1/(2m)}\ll k_n\ll n^{1/2}$,
\begin{eqnarray}
\|\Pi^*_n\left(\sqrt{n}(\bm\theta-\bm\theta_0)\in\cdot|Y\right)-N\left(\bm\mu_n,\sigma^2_0\bm\Sigma_n\right)\|_{TV}=o_{P_0}(1).\label{thm2}
\end{eqnarray}
\end{theorem}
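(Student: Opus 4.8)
The plan is to treat $\bm\theta$ as the minimizer of $R_{f,n}(\cdot)$ for $f=\bm\beta^T\bm N$, that is, as a Z-estimator solving its stationarity equation, and to derive an asymptotically linear representation of $\sqrt{n}(\bm\theta-\bm\theta_0)$ as a linear functional of the Gaussian posterior draw $\bm\beta$. First I would establish posterior consistency of $\bm\theta$: since the spline posterior \eqref{posterior} concentrates $f$ around $f_0$ in $\|\cdot\|_g$, and $\sup_t|f_{\bm\eta,r_n}(t)-f_{\bm\eta}(t)|=O(r_n^{-4})$ by \eqref{rk4}, the random criterion $R_{f,n}(\cdot)$ converges uniformly on the compact set $\Theta$ to $R_{f_0}(\cdot)$; assumption (A5) makes $\bm\theta_0$ a well-separated minimizer, so an argmax-type argument yields $\Pi_n^*(\|\bm\theta-\bm\theta_0\|>\epsilon\mid Y)=o_{P_0}(1)$. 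This also places $\bm\theta$ in the interior eventually, so the stationarity condition $\int_0^1(\dot f_{\bm\theta,r_n}(t))^T(f(t)-f_{\bm\theta,r_n}(t))g(t)\,dt=\bm 0$ holds, and likewise the defining condition $\int_0^1(\dot f_{\bm\theta_0}(t))^T(f_0(t)-f_{\bm\theta_0}(t))g(t)\,dt=\bm 0$ for $\bm\theta_0$.

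Next I would expand this estimating equation about $\bm\theta_0$. Replacing $f_{\bm\eta,r_n},\dot f_{\bm\eta,r_n}$ by $f_{\bm\eta},\dot f_{\bm\eta}$ costs only $O(r_n^{-4})$ by \eqref{rk4}, which is $o(n^{-1/2})$ because $r_n\gg n^{1/8}$. Writing $f-f_{\bm\theta_0}=(f-f_0)+(f_0-f_{\bm\theta_0})$ and using the stationarity condition for $\bm\theta_0$, the estimating function at $\bm\theta_0$ reduces to $\int_0^1(\dot f_{\bm\theta_0}(t))^T(f(t)-f_0(t))g(t)\,dt$, while its derivative at $\bm\theta_0$ converges to $-\bm J_{\bm\theta_0}$ because $f$ is close to $f_0$. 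Solving gives
\begin{eqnarray}
\sqrt{n}(\bm\theta-\bm\theta_0)=\sqrt{n}\int_0^1\bm C(t)(f(t)-f_0(t))g(t)\,dt+\bm r_n,\nonumber
\end{eqnarray}
with $\bm C(t)=\bm J_{\bm\theta_0}^{-1}(\dot f_{\bm\theta_0}(t))^T$. The remainder $\bm r_n$ collects the RK$4$ error, the second-order Taylor term (of order $\sqrt{n}\|\bm\theta-\bm\theta_0\|^2=O_{P_0}(n^{-1/2})$), and the Jacobian perturbation. Here $m\geq3$ guarantees the smoothness of $f_{\bm\theta},\dot f_{\bm\theta},\ddot f_{\bm\theta}$ and of the spline approximations of $\bm C$ needed above, $n^{1/(2m)}\ll k_n$ forces the spline bias $k_n^{-m}$ to satisfy $\sqrt{n}\,k_n^{-m}\to0$, and $k_n\ll n^{1/2}$ keeps the higher-order stochastic terms (of order $\sqrt{n}\,k_n/n=k_n/\sqrt{n}$) negligible, so that $\bm r_n=o_{P_0}(1)$ in posterior probability.

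Since $f=\bm\beta^T\bm N$, the leading term equals $\sqrt{n}\big(\bm H_n^T\bm\beta-\int_0^1\bm C(t)f_0(t)g(t)\,dt\big)$, an affine image of $\bm\beta$. Conditionally on $\sigma^2$ the posterior \eqref{posterior} makes $\bm\beta$ Gaussian, so this affine image is exactly Gaussian with mean and covariance built from $\bm X_n^T\bm X_n+k_n n^{-2}I$. Because $k_n\ll n^{1/2}$ the ridge term $k_n n^{-2}I$ is negligible against $\bm X_n^T\bm X_n$, whose eigenvalues are of order $n/k_n$ by \citet{zhou1998local}, so the inverse may be replaced by $(\bm X_n^T\bm X_n)^{-1}$, producing $\bm\mu_n$ and $\sigma^2\bm\Sigma_n$. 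Since the marginal posterior of $\sigma^2$ concentrates at $\sigma_0^2$, mixing over $\sigma^2$ replaces $\sigma^2\bm\Sigma_n$ by $\sigma_0^2\bm\Sigma_n$ at vanishing total-variation cost. Finally $\bm\Sigma_n=n\bm H_n^T(\bm X_n^T\bm X_n)^{-1}\bm H_n\to\bm B$, because the spline projections of the components of $\bm C$ converge in $\|\cdot\|_g$, so non-singularity of $\bm B$ makes the limiting covariance nondegenerate.

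The hard part will be upgrading the asymptotically linear representation to the total-variation statement \eqref{thm2}. The map $\bm\beta\mapsto\bm\theta$ is nonlinear and only locally well defined, so I would truncate to a set of posterior probability $1-o_{P_0}(1)$ on which $\bm\theta$ is interior and the implicit-function and Taylor expansions are valid; the complement contributes negligibly because the target Gaussian also assigns it small mass. On this set the conditional law of the linear part is exactly $N(\bm\mu_n,\sigma^2\bm\Sigma_n)$, and the key lemma is that perturbing a Gaussian whose covariance is bounded below, guaranteed by non-singular $\bm B$ so that $\mathrm{mineig}(\bm\Sigma_n)$ stays away from $0$, by the $o_{P_0}(1)$ remainder $\bm r_n$ changes the distribution by $o_{P_0}(1)$ in total variation. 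Verifying that $\bm r_n$ is negligible in this stronger, TV-compatible sense uniformly over the truncation set is the principal technical obstacle, and it is precisely here that the rate conditions $n^{1/(2m)}\ll k_n\ll n^{1/2}$ and $r_n\gg n^{1/8}$ are consumed.
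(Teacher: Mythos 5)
Your proposal follows essentially the same route as the paper's proof: posterior consistency of $f$ and $\bm\theta$ (the paper's Lemmas 7--8), linearization of the two stationarity equations to get $\sqrt{n}(\bm\theta-\bm\theta_0)\approx \sqrt{n}\bm H_n^T\bm\beta-\sqrt{n}\bm J_{\bm\theta_0}^{-1}\bm\Gamma(f_0)$ with the RK$4$ and Taylor remainders controlled on a set of posterior probability tending to one (Lemma 9), exact conditional Gaussianity of this affine image of $\bm\beta$ with the ridge term $k_nn^{-2}\bm I$ discarded at asymptotically negligible total-variation cost, and finally mixing over $\sigma^2$ via its posterior consistency together with stochastic boundedness of $\bm\mu_n$ and $\bm\Sigma_n$ (Lemmas 10--11). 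Even the step you flag as the principal obstacle --- truncating to a good set and converting the $o_{P_0}(1)$ remainder into a total-variation statement using nondegeneracy of the limiting covariance (which is where non-singularity of $\bm B$ enters) --- is precisely how the paper closes the argument, so your blueprint matches the published proof.
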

\begin{remark}
{It will be proved later in Lemma 10 that both $\bm\mu_n$ and $\bm\Sigma_n$ are stochastically bounded. Hence, with high true probability the posterior distribution of $(\bm\theta-\bm\theta_0)$ contracts at $\bm0$ at $n^{-1/2}$ rate}.
\end{remark}
We also get the following important corollary.
\begin{corollary}
When the regression model \eqref{prop} is correctly specified and the true distribution of error is Gaussian, the Bayes estimator based on $\Pi^*_n$ is asymptotically efficient.
\end{corollary}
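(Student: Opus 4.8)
The plan is to show that the posterior mean based on $\Pi_n^*$, which is the Bayes estimator under squared error loss, is asymptotically linear with influence function equal to the efficient influence function of the correctly specified parametric model, and that its rescaled posterior dispersion converges to the inverse Fisher information. By Theorem \ref{bvm_RKTB} together with the stochastic boundedness of $\bm\mu_n$ and $\bm\Sigma_n$ asserted in the remark (Lemma 10), the total variation convergence of the posterior upgrades to convergence of the posterior mean, so the Bayes estimator $\hat{\bm\theta}$ satisfies $\sqrt{n}(\hat{\bm\theta}-\bm\theta_0)=\bm\mu_n+o_{P_0}(1)$ while the rescaled posterior dispersion equals $\sigma_0^2\bm\Sigma_n$. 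It therefore suffices to identify the limits of $\bm\mu_n$ and $\sigma_0^2\bm\Sigma_n$ under correct specification with Gaussian errors.

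First I would record the simplification under correct specification. When $f_0=f_{\bm\theta_0}$, the second-derivative term in $\bm J_{\bm\theta_0}$ vanishes, leaving $\bm J_{\bm\theta_0}=\int_0^1(\dot f_{\bm\theta_0}(t))^T\dot f_{\bm\theta_0}(t)g(t)\,dt$. With $\varepsilon\sim N(0,\sigma_0^2)$ the single-observation score for $\bm\theta$ is $\dot\ell_{\bm\theta_0}(X,Y)=\sigma_0^{-2}\varepsilon\,(\dot f_{\bm\theta_0}(X))^T$, so the Fisher information is $\bm I_{\bm\theta_0}=\sigma_0^{-2}\bm J_{\bm\theta_0}$ and $\bm I_{\bm\theta_0}^{-1}=\sigma_0^2\bm J_{\bm\theta_0}^{-1}$. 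Consequently the efficient influence function is $\bm I_{\bm\theta_0}^{-1}\dot\ell_{\bm\theta_0}(X,Y)=\bm J_{\bm\theta_0}^{-1}(\dot f_{\bm\theta_0}(X))^T\varepsilon=\bm C(X)\varepsilon$, with $\bm C(\cdot)$ as defined before the theorem. This is the target representation I aim to establish for $\bm\mu_n$.

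Next I would analyze $\bm\mu_n$. Writing $\bm G_n=n^{-1}\bm X_n^T\bm X_n$ and $\bm Y=f_{\bm\theta_0}(\bm X)+\bm\varepsilon$, I would split $\bm\mu_n$ into a stochastic part $n^{-1/2}\bm H_n^T\bm G_n^{-1}\sum_i\bm N(X_i)\varepsilon_i$ and a deterministic bias part $\sqrt{n}\{\bm H_n^T\bm G_n^{-1}(n^{-1}\bm X_n^Tf_{\bm\theta_0}(\bm X))-\int_0^1\bm C(t)f_0(t)g(t)\,dt\}$. The core estimate is the uniform linearization $\bm H_n^T\bm G_n^{-1}\bm N(X_i)=\bm C(X_i)+o(1)$, which follows because $\bm C$ is $C^{m-1}$ and admits a spline approximant $\bm A\bm N(\cdot)$ with error controlled by the knot mesh, whence $\bm H_n^T\bm G_n^{-1}\approx\bm A$ and $\bm H_n^T\bm G_n^{-1}\bm N(t)\approx\bm C(t)$. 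This linearization simultaneously forces the bias part to $o_{P_0}(1)$ and reduces the stochastic part to $n^{-1/2}\sum_i\bm C(X_i)\varepsilon_i+o_{P_0}(1)$. A Lindeberg central limit theorem for the i.i.d.\ mean-zero summands $\bm C(X_i)\varepsilon_i$ then yields $\bm\mu_n\rightsquigarrow N(0,\sigma_0^2\int_0^1\bm C(t)\bm C^T(t)g(t)\,dt)$. Under correct specification $\int_0^1\bm C\bm C^Tg=\bm J_{\bm\theta_0}^{-1}\bm J_{\bm\theta_0}\bm J_{\bm\theta_0}^{-1}=\bm J_{\bm\theta_0}^{-1}=\bm B$, so the limit variance is exactly $\sigma_0^2\bm J_{\bm\theta_0}^{-1}=\bm I_{\bm\theta_0}^{-1}$. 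The same spline approximation gives $\sigma_0^2\bm\Sigma_n=\sigma_0^2\bm H_n^T\bm G_n^{-1}\bm H_n\to\sigma_0^2\bm B=\bm I_{\bm\theta_0}^{-1}$, matching the posterior dispersion to the inverse information. Combining, $\sqrt{n}(\hat{\bm\theta}-\bm\theta_0)\rightsquigarrow N(0,\bm I_{\bm\theta_0}^{-1})$, so the Bayes estimator attains the Cram\'er--Rao bound and is asymptotically efficient.

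I expect the main obstacle to be the uniform spline linearization $\bm H_n^T\bm G_n^{-1}\bm N(X_i)=\bm C(X_i)+o(1)$ and the accompanying bias cancellation, which must be controlled at rate $o(n^{-1/2})$ after multiplication by $\sqrt{n}$. This is precisely where the smoothness $m\geq3$ and the knot condition $n^{1/(2m)}\ll k_n\ll n^{1/2}$ enter: the upper bound keeps the spline Gram matrix $\bm G_n$ well-conditioned and the empirical quantities close to their integral limits, while the lower bound renders the deterministic spline approximation error, of order $k_n^{-m}$, negligible relative to $n^{-1/2}$. These estimates rely on the local B-spline asymptotics of \citet{zhou1998local}. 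The secondary technical point is the passage from total variation convergence of the posterior to convergence of its mean, which is secured by the stochastic boundedness established in Lemma 10 through a uniform integrability argument.
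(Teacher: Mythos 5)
Your proposal is correct in substance, and its decisive computations coincide with the paper's: under correct specification $\bm J_{\bm\theta_0}$ reduces to $\int_0^1(\dot f_{\bm\theta_0}(t))^T\dot f_{\bm\theta_0}(t)g(t)\,dt$, the Fisher information is $\bm I(\bm\theta_0)=\sigma_0^{-2}\bm J_{\bm\theta_0}$, and $\sigma_0^2\bm\Sigma_n\stackrel{P_0}\rightarrow\sigma_0^2\bm J_{\bm\theta_0}^{-1}\int_0^1(\dot f_{\bm\theta_0}(t))^T\dot f_{\bm\theta_0}(t)g(t)\,dt\,\bm J_{\bm\theta_0}^{-1}=(\bm I(\bm\theta_0))^{-1}$. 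The paper's proof of the corollary is exactly this three-line verification: it computes the score and information for the correctly specified Gaussian model and then simply cites the limit of $\sigma_0^2\bm\Sigma_n$ already obtained in the proof of Lemma 10, leaving implicit that a Bernstein--von Mises theorem with dispersion equal to the inverse information yields efficiency. You go further on two fronts: you explicitly establish asymptotic linearity of the centering, $\bm\mu_n=n^{-1/2}\sum_i\bm C(X_i)\varepsilon_i+o_{P_0}(1)$, identifying it with the efficient influence function $\bm I(\bm\theta_0)^{-1}\dot\ell_{\bm\theta_0}(X,Y)=\bm C(X)\varepsilon$, and you address the passage from total-variation convergence of the posterior to convergence of the posterior mean. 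The first addition is genuinely informative (it is what makes the centering, not just the spread, efficient) and essentially reconstructs the decomposition $\E(\bm\mu_n|\bm X)+$ fluctuation used inside Lemma 10, supplemented by a CLT; the paper buys brevity by delegating all of this to that lemma.

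Two caveats. First, your sentence claiming that the uniform linearization $\bm H_n^T\bm G_n^{-1}\bm N(t)=\bm C(t)+o(1)$ ``simultaneously forces the bias part to $o_{P_0}(1)$'' overstates what that estimate gives: the spline approximant of $\bm C$ has error only $O(k_n^{-1})$ (the paper's Lemma 10 uses the de Boor bound at this rate), and $\sqrt{n}\,k_n^{-1}\rightarrow\infty$ under $k_n\ll n^{1/2}$, so the bias cannot be killed this way. As in Lemma 10, the bias must instead be routed through the spline approximation of $f_0$ at rate $k_n^{-m}$ together with the exact orthogonality $(\bm I_n-\bm P_{\bm X_n})\bm X_n=\bm 0$, giving $\|\E(\bm\mu_n|\bm X)\|=O_{P_0}(\sqrt{n}\,k_n^{-m})=o_{P_0}(1)$ by $k_n\gg n^{1/(2m)}$; your closing paragraph does locate this correctly, but the earlier claim should be repaired to match. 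Second, the step from total-variation convergence plus stochastic boundedness of $(\bm\mu_n,\bm\Sigma_n)$ to convergence of the posterior mean requires uniform integrability of $\sqrt{n}(\bm\theta-\bm\theta_0)$ under the posterior, which does not follow from those facts alone; you assert it rather than prove it. The paper sidesteps this entirely by phrasing efficiency through the BvM limit itself, so this is a gap relative to your own more ambitious formulation rather than relative to the paper's argument.
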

\begin{remark}
The Bayesian two-step approach \citep{bhaumik2014bayesiant} considers the distance between the derivative of the function in the nonparametric model and the derivative given by the ODE. On the other hand RKTB approach deals directly with the distance between the function in the nonparametric model and the parametric nonlinear regression model through the RK4 approximate solution of the ODE. Direct distance in the latter approach produces the efficient linearization giving rise to efficient concentration of the posterior distribution which can be traced back to efficiency properties of minimum distance estimation methods depending on the nature of the distance.
\end{remark}
\begin{remark}
RKSB is the Bayesian analog of estimating $\bm\theta$ as $$\hat{\bm\theta}=\arg\min_{\bm\eta\in\Theta}\sum_{i=1}^n(Y_i-f_{\bm\eta,r_n})^2.$$ Similarly, RKTB is the Bayesian analog of $\hat{\bm\theta}=\arg\min_{\bm\eta\in\Theta}\int_0^1(\hat{f}(t)-f_{\bm\eta,r_n})^2g(t)dt$, where $\hat{f}(\cdot)$ stands for the nonparametric estimate of $f$ based on B-splines. Arguments similar to ours should be able to establish analogous convergence results for these estimators.
\end{remark}
\section{Simulation Study}
We consider the Lotka-Volterra equations to study the posterior distribution of $\bm\theta$. We consider two cases. In case $1$ the true regression function belongs to the solution set and in case 2 it does not. Thus we have $p=4,d=2$ and the ODE's are given by
\begin{eqnarray}
F_1(t, \bm f_{\bm\theta}(t), \bm\theta)&=&\theta_1f_{1\bm\theta}(t)-\theta_2f_{1\bm\theta}(t)f_{2\bm\theta}(t),\nonumber\\
F_2(t, \bm f_{\bm\theta}(t), \bm\theta)&=&-\theta_3f_{2\bm\theta}(t)+\theta_4f_{1\bm\theta}(t)f_{2\bm\theta}(t),\,\,t\in[0,1],\nonumber
\end{eqnarray}
with initial condition $f_{1\bm\theta}(0)=1,f_{2\bm\theta}(0)=0.5$. The above system is not analytically solvable.

Case 1 (well-specified case): The true regression function is $\bm f_0(t)=(f_{1\bm\theta_0}(t),f_{2\bm\theta_0}(t))^T$ where $\bm\theta_0=(10,10,10,10)^T.$

Case 2 (misspecified case): The true regression function is $\bm f_0(t)=(f_{1\bm\tau_0}(t)+\frac{t^2+t-c_1}{6},f_{2\bm\tau_0}(t)+\frac{t^2+t-c_2}{6})^T$ where $\bm\tau_0=(10,10,10,10)^T$ and $c_1$ and $c_2$ are chosen so that $$\int_0^1f_{1\bm\tau_0}(t)\left(t^2+t-c_1\right)=\int_0^1f_{2\bm\tau_0}(t)\left(t^2+t-c_2\right)=0.$$

For a sample of size $n$, the $X_i$'s are drawn from $\mathrm{Uniform}(0,1)$ distribution for $i=1,\ldots,n$. Samples of sizes 100 and 500 are considered. We simulate 900 replications for each case. The output are displayed in Table 1 and Table 2 respectively. Under each replication a sample of size 1000 is drawn from the posterior distribution of $\bm\theta$ using RKSB, RKTB and Bayesian two-step \citep{bhaumik2014bayesiant} methods and then 95\% equal tailed credible intervals are obtained. For case 2 we do not consider the Bayesian two-step method since there is no existing result on asymptotic efficiency under misspecification of the regression function and hence it is not comparable with the numerical solution based methods. The Bayesian two-step method is abbreviated as TS in Table 1. We calculate the coverage and the average length of the corresponding credible intervals over these $900$ replications. The estimated standard errors of the interval length and coverage are given inside the parentheses in the tables. 

The true distribution of error is taken $N(0,(0.1)^2)$. We put an inverse gamma prior on $\sigma^2$ with shape and scale parameters being $30$ and $5$ respectively. For RKSB the prior for each $\theta_j$ is chosen as independent Gaussian distribution with mean $6$ and variance $16$ for $j=1,\ldots,4$. We take $n$ grid points to obtain the numerical solution of the ODE by RK$4$ for a sample of size $n$. According to the requirements of Theorem \ref{bvm_RKTB} of this paper and Theorem 1 of \citet{bhaumik2014bayesiant}, we take $m=3$ and $m=5$ for RKTB and Bayesian two-step method respectively. In both cases we choose $k_n-1$ equispaced interior knots $\frac{1}{k_n},\frac{2}{k_n},\ldots,\frac{k_n-1}{k_n}$. This specific choice of knots satisfies the pseudo-uniformity criteria \eqref{pseudo} with $M=1$. Looking at the order of $k_n$ suggested by Theorem $4.2$, $k_n$ is chosen in the order of $n^{1/5}$ giving the values of $k_n$ as $13$ and $18$ for $n=100$ and $n=500$ respectively in RKTB. In Bayesian two-step method the values of $k_n$ are $17$ and $20$ for $n=100$ and $n=500$ respectively by choosing $k_n$ in the order of $n^{1/9}$ following the suggestion given in Theorem 1 of \citet{bhaumik2014bayesiant}. In all the cases the constant multiplier to the chosen asymptotic order is selected through cross-validation.

We separately analyze the output given in Table 1 since it deals with asymptotic efficiency. Not surprisingly the first two methods perform much better compared to the third one because of asymptotic efficiency obtained from Corollaries $1$ and $2$ respectively. For RKSB a single replication took about one hour and four hours for samples of sizes $100$ and $500$ respectively. For RKTB these times are around one hour and two and half hours respectively. In Bayesian two-step method each replication took about one and two minutes for $n=100$ and $500$ respectively. Thus from the computational point of view Bayesian two-step method is preferable than the numerical solution based approaches.
\begin{table}[h]
\centering
\caption{\textit{Coverages and average lengths of the Bayesian credible intervals for the three methods in case of well-specified regression model}}
\begin{tabular}{|c|c|cc|cc|cc|}
\hline
$n$&&\multicolumn{2}{|c|}{RKSB}&\multicolumn{2}{|c|}{RKTB}&\multicolumn{2}{|c|}{TS}\\
\hline
&&coverage&length&coverage&length&coverage&length\\
&&(se)&(se)&(se)&(se)&(se)&(se)\\
100&$\theta_1$&100.0&2.25&100.0&2.17&100.0&6.93\\
&&(0.00)&(0.29)&(0.00)&(0.65)&(0.00)&(4.95)\\
&$\theta_2$&100.0&2.57&100.0&2.48&100.0&6.67\\
&&(0.00)&(0.33)&(0.00)&(0.74)&(0.00)&(4.90)\\
&$\theta_3$&99.9&2.50&100.0&2.44&100.0&7.12\\
&&(0.00)&(0.34)&(0.00)&(1.44)&(0.00)&(4.92)\\
&$\theta_4$&100.0&2.27&100.0&2.20&100.0&6.59\\
&&(0.00)&(0.32)&(0.00)&(1.19)&(0.00)&(4.77)\\
\cline{2-8}
500&$\theta_1$&100.0&0.75&99.4&0.56&99.2&1.09\\
&&(0.00)&(0.06)&(0.00)&(0.02)&(0.00)&(0.05)\\
&$\theta_2$&100.0&0.85&99.4&0.64&98.8&1.02\\
&&(0.00)&(0.07)&(0.00)&(0.02)&(0.00)&(0.05)\\
&$\theta_3$&100.0&0.82&99.3&0.61&99.0&1.16\\
&&(0.00)&(0.07)&(0.00)&(0.02)&(0.00)&(0.05)\\
&$\theta_4$&99.9&0.74&99.3&0.56&99.0&1.04\\
&&(0.00)&(0.06)&(0.00)&(0.02)&(0.00)&(0.05)\\
\hline
\end{tabular}
\end{table}

\begin{table}[h]
\centering
\caption{\textit{Coverages and average lengths of the Bayesian credible intervals for RKSB and RKTB in case of misspecified regression model}}
\begin{tabular}{|c|c|cc|cc|}
\hline
$n$&&\multicolumn{2}{|c|}{RKSB}&\multicolumn{2}{|c|}{RKTB}\\
\hline
&&coverage&length&coverage&length\\
&&(se)&(se)&(se)&(se)\\
100&$\theta_1$&99.4&2.32&100.0&2.21\\
&&(0.01)&(0.31)&(0.00)&(0.83)\\
&$\theta_2$&99.1&2.64&100.0&2.46\\
&&(0.01)&(0.36)&(0.00)&(0.79)\\
&$\theta_3$&99.4&2.78&100.0&2.7\\
&&(0.01)&(0.46)&(0.00)&(1.73)\\
&$\theta_4$&99.3&2.5&100.0&2.38\\
&&(0.01)&(0.43)&(0.00)&(1.43)\\
\cline{2-6}
500&$\theta_1$&98.8&0.89&99.3&0.56\\
&&(0.00)&(0.07)&(0.00)&(0.02)\\
&$\theta_2$&98.9&1&99.5&0.62\\
&&(0.00)&(0.13)&(0.00)&(0.02)\\
&$\theta_3$&99.0&1.05&99.2&0.66\\
&&(0.00)&(0.09)&(0.00)&(0.03)\\
&$\theta_4$&99.2&0.94&99.1&0.59\\
&&(0.00)&(0.08)&(0.00)&(0.02)\\
\hline
\end{tabular}
\end{table}

\section{Proofs}
We use the operators $\E_0(\cdot)$ and $\Var_0(\cdot)$ to denote expectation and variance with respect to $P_0$.

\begin{proof}[Proof of Theorem 4.1]
From Lemma $1$ below we know that there exists a compact subset $U$ of $(0,\infty)$ such that $\Pi_n(\sigma^2\in U|\bm X,\bm Y)\stackrel{P_0}\rightarrow1$. Let $\Pi_{U,n}(\cdot|\bm X,\bm Y)$ be the posterior distribution conditioned on $\sigma^2\in U$. By Theorem 2.1 of \citet{kleijn2012bernstein} if we can ensure that there exist stochastically bounded random variables $\bm\Delta_{n,\gamma_0}$ and a positive definite matrix $\bm V_{\bm\gamma_0}$ such that for every compact set $K\subset\mathbb{R}^{p+1}$,
\begin{eqnarray}
\sup_{h\in K}\left|\log\frac{p^{(n)}_{\bm\gamma_0+\bm h/\sqrt{n},n}}{p^{(n)}_{\bm\gamma_0,n}}(\bm X,Y)-\bm h^T\bm V_{\bm\gamma_0}\bm\Delta_{n,\bm\gamma_0}+\frac{1}{2}\bm h^T\bm V_{\bm\gamma_0}\bm h\right|\rightarrow0,\label{cond1}
\end{eqnarray}
in (outer) $P_0^{(n)}$ probability and that for every sequence of constants $M_n\rightarrow\infty$, we have
\begin{eqnarray}
P^{(n)}_0\Pi_{U,n}\left(\sqrt{n}\|\bm\gamma-\bm\gamma_0\|>M_n|\bm X,Y\right)\rightarrow0,\label{cond2}
\end{eqnarray}
then
\begin{eqnarray}
\left\|\Pi_{U,n}\left(\sqrt{n}(\bm\gamma-\bm\gamma_0)\in\cdot|\bm X,Y\right)-\bm N(\bm\Delta_{n,\bm\gamma_0},\bm V^{-1}_{\bm\gamma_0})\right\|_{TV}=o_{P_0}(1)\nonumber.
\end{eqnarray}
We show that the conditions \eqref{cond1} and \eqref{cond2} hold in Lemmas $1$ to $5$. Lemma $2$ gives that $\bm V_{\bm\gamma_0}=\left({\begin{array}{cc}{\sigma^{-2}_*}{\bm V_{\theta_0}}&\bm 0\\\bm 0&{\sigma^{-4}_*}/2\end{array}}\right)$ with $$\bm V_{\bm\theta_0}=\int_0^1\left(\dot{f}^T_{\bm\theta_0}(t)\dot{f}_{\bm\theta_0}(t)-\frac{\partial}{\partial\bm\theta}\left(\dot{ f}^T_{\bm\theta}(t)\left(f_0(t)-f_{\bm\theta_0}(t)\right)\right)\Big{|}_{\bm\theta=\bm\theta_0}\right)g(t)dt$$ and $\bm\Delta_{n,\bm\gamma_0}=\bm V^{-1}_{\bm\gamma_0}\mathbb{G}_n\dot\ell_{\bm\gamma_0,n}$. Since $\|\Pi_{n}-\Pi_{U,n}\|_{TV}=o_{P_0}(1)$, we get
\begin{eqnarray}
\left\|\Pi_n\left(\sqrt{n}(\bm\gamma-\bm\gamma_0)\in\cdot|\bm X,Y\right)-\bm N(\bm\Delta_{n,\bm\gamma_0},\bm V^{-1}_{\bm\gamma_0})\right\|_{TV}=o_{P_0}(1)\nonumber.
\end{eqnarray}
Hence, we get the desired result.
\end{proof}


\begin{proof}[Proof of Corollary 1]
The log-likelihood of the correctly specified model with Gaussian error is given by
\begin{eqnarray}
\ell_{\bm\gamma_0}(X,Y)&=&-\log\sigma_0-\frac{1}{2\sigma^2_0}|Y-f_{\bm\theta_0}(X)|^2+\log g(X)\nonumber.
\end{eqnarray}
Thus $\frac{\partial}{\partial\bm\theta_0}\ell_{\bm\gamma_0}(X,Y)=\sigma^{-2}_0\left(\dot{f}_{\bm\theta_0}(X)\right)^T\left(Y-f_{\bm\theta_0}(X)\right)$ and $\frac{\partial}{\partial\sigma^2_0}\ell_{\bm\gamma_0}(X,Y)=-\frac{1}{2\sigma^2_0}+\frac{1}{2\sigma^4_0}|Y-f_{\bm\theta_0}(X)|^2$. Hence, the Fisher information is given by
\begin{eqnarray}
\bm I(\bm\gamma_0)=\left({\begin{array}{cc}{{\sigma^{-2}_0}\int_0^1\dot{f}^T_{\bm\theta_0}(t)\dot{f}_{\bm\theta_0}(t)g(t)dt}&\bm 0\\\bm 0&{\sigma^{-4}_0}/2\end{array}}\right).\nonumber
\end{eqnarray}
Looking at the form of $\bm V_{\bm\gamma_0}$ in Theorem 4.1, we get $\bm V^{-1}_{\bm\gamma_0}=\left(\bm I(\bm\gamma_0)\right)^{-1}$ if the regression function is correctly specified and the true error distribution is $N(0,\sigma^2_0)$.
\end{proof}


\begin{proof}[Proof of Theorem 4.2]
We have for $f(\cdot)=\bm\beta^T\bm N(\cdot)$
\begin{eqnarray}
\bm J_{\bm\theta_0}^{-1}\bm\Gamma(f)&=&\int_0^1\bm C(t)\bm\beta^T\bm N(t)g(t)dt=\bm H_n^T\bm\beta,\label{linearize}
\end{eqnarray}
where $\bm H_{n}^T=\int_0^1\bm C(t)\bm N^T(t)g(t)dt$ which is a matrix of order $p\times (k_n+m-1)$. Consequently, the asymptotic variance of the conditional posterior distribution of $\bm H^T_n\bm\beta$ is $\sigma^2\bm H^T_{n}\left(\bm X^T_n\bm X_n+\frac{k_n}{n^2}\bm I\right)^{-1}\bm H_{n}$.
By Lemma $9$ and the posterior consistency of the $\sigma^2$ given by Lemma $11$, it suffices to show that for any neighborhood $\mathscr{N}$ of $\sigma^2_0$,
\begin{eqnarray}
\sup_{\sigma^2\in\mathscr{N}}\left\|\Pi^*_n\left(\sqrt{n}\bm H_{n}^T\bm\beta-\sqrt{n}\bm J^{-1}_{\bm\theta_0}\bm\Gamma(f_0)\in\cdot|\bm X,\bm Y,\sigma^2\right)-{N}(\bm\mu_n,\sigma^2\bm\Sigma_n)\right\|_{TV}
=o_{P_0}(1).\label{thm2_1}
\end{eqnarray}
Note that $\Pi(\mathscr{N}^c|\bm X,\bm Y)=o_{P_0}(1)$. It is straightforward to verify that the Kullback-Leibler divergence between $N\left(\left(\bm X^T_n\bm X_n\right)^{-1}\bm X^T_n\bm Y,\sigma^2\left(\bm X^T_n\bm X_n\right)^{-1}\right)$ and the distribution given by \eqref{posterior} converges in $P_0$-probability to zero uniformly over $\sigma^2\in\mathscr{N}$ and hence, so is the total variation distance. By linear transformation \eqref{thm2_1} follows. Note that
\begin{eqnarray}
\lefteqn{\sup_{B\in\mathscr{R}^p}\left|\Pi(\sqrt{n}(\bm\theta-\bm\theta_0)\in B|\bm X,\bm Y)-\Phi(B;\bm\mu_n,\sigma_0^2\bm\Sigma_n)\right|}\nonumber\\
&\leq&\int\sup_{B\in\mathscr{R}^p}\left|\Pi(\sqrt{n}(\bm\theta-\bm\theta_0)\in B|\bm X,\bm Y,\sigma^2)-\Phi(B;\bm\mu_n,\sigma^2\bm\Sigma_n)\right|d\Pi(\sigma^2|\bm X,\bm Y)\nonumber\\
&&+\int\sup_{B\in\mathscr{R}^p}\left|\Phi(B;\bm\mu_n,\sigma^2\bm\Sigma_n)-\Phi(B;\bm\mu_n,\sigma_0^2\bm\Sigma_n)\right|d\Pi(\sigma^2|\bm X,\bm Y)\nonumber\\
&\leq&\sup_{\sigma^2\in\mathscr{N}}\sup_{B\in\mathscr{R}^p}\left|\Pi(\sqrt{n}(\bm\theta-\bm\theta_0)\in B|\bm X,\bm Y,\sigma^2)-\Phi(B;\bm\mu_n,\sigma^2\bm\Sigma_n)\right|\nonumber\\
&&+\sup_{\sigma^2\in\mathscr{N},B\in\mathscr{R}^p}\left|\Phi(B;\bm\mu_n,\sigma^2\bm\Sigma_n)-\Phi(B;\bm\mu_n,\sigma_0^2\bm\Sigma_n)\right|+2\Pi(\mathscr{N}^c|\bm X,\bm Y).\nonumber
\end{eqnarray}
Using the fact that $\bm\Sigma_n$ is stochastically bounded given by Lemma $10$, the total variation distance between the two normal distributions appearing in the second term of the above display is bounded by a constant multiple of $|\sigma^2-\sigma^2_0|$, and hence can be made arbitrarily small by choosing $\mathscr{N}$ accordingly. The first term converges in probability to zero by \eqref{thm2_1}. The third term converges in probability to zero by the posterior consistency.
\end{proof}


\begin{proof}[Proof of Corollary 2]
The log-likelihood of the correctly specified model is given by
\begin{eqnarray}
\ell_{\bm\theta_0}(X,Y)&=&-\log\sigma_0-\frac{1}{2\sigma^2_0}|Y-f_{\bm\theta_0}(X)|^2+\log g(X)\nonumber.
\end{eqnarray}
Thus $\dot\ell_{\bm\theta_0}(X,Y)=-\sigma^{-2}_0\left(\dot{f}_{\bm\theta_0}(X)\right)^T\left(Y-f_{\bm\theta_0}(X)\right)$ and the Fisher information is given by
$\bm I(\bm\theta_0)=\sigma^{-2}_0\int_0^1\left(\dot{f}_{\bm\theta_0}(X)\right)^T\dot{f}_{\bm\theta_0}(t)g(t)dt$.
In the proof of Lemma $10$ we obtained that $\sigma^2_0\bm\Sigma_n\stackrel{P_0}\rightarrow\sigma^2_0\bm J^{-1}_{\bm\theta_0}\int_0^1\left(\dot f_{\bm\theta_0}(t)\right)^T\dot f_{\bm\theta_0}(t)g(t)dt\,\bm J^{-1}_{\bm\theta_0}$. This limit is equal to $\left(\bm I({\bm\theta_0})\right)^{-1}$ under the correct specification of the regression function as well as the likelihood.
\end{proof}

\section{Proofs of technical lemmas}
The first five lemmas in this section are related to RKSB. The rest are for RKTB. The first lemma shows that the posterior of $\sigma^2$ lies inside a compact set with high probability.


\begin{lemma}
There exists a compact set $U$ independent of $\bm\theta$ and $n$ such that $\Pi_n(\sigma^2\in U|\bm X,\bm Y)\stackrel{P_0}\rightarrow1$.
\end{lemma}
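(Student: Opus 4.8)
The plan is to exploit conjugacy. Because the working model uses the Gaussian likelihood \eqref{approx} together with an inverse-gamma prior on $\sigma^2$, the conditional posterior of $\sigma^2$ given $\bm\theta$ is available in closed form. Writing $S_n(\bm\theta)=\sum_{i=1}^n|Y_i-f_{\bm\theta,r_n}(X_i)|^2$, one checks directly from \eqref{approx} that, given $\bm\theta$, $\sigma^2$ follows an inverse-gamma law with shape $\alpha_n=n/2+a$ and scale $\beta_n(\bm\theta)=S_n(\bm\theta)/2+b$. Since the marginal posterior of $\sigma^2$ is the mixture of these conditional laws against the marginal posterior of $\bm\theta$, it suffices to prove that each conditional law concentrates on one fixed compact set $U=[u_1,u_2]\subset(0,\infty)$, with a bound uniform over $\bm\theta\in\Theta$, on an event of $P_0$-probability tending to one.

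First I would show that $S_n(\bm\theta)/n$ is bounded away from $0$ and from $\infty$, uniformly over the compact set $\Theta$, with probability tending to one. Writing $Y_i=f_0(X_i)+\varepsilon_i$ and $d_{i,\bm\theta}=f_0(X_i)-f_{\bm\theta,r_n}(X_i)$, I would decompose
$$\frac{S_n(\bm\theta)}{n}=\frac1n\sum_{i=1}^n\varepsilon_i^2+\frac2n\sum_{i=1}^n\varepsilon_i d_{i,\bm\theta}+\frac1n\sum_{i=1}^n d_{i,\bm\theta}^2.$$
The first term converges to $\sigma_0^2>0$ by the law of large numbers, the errors having finite moment-generating function and hence finite variance. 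The last term converges to $\int_0^1|f_0(t)-f_{\bm\theta}(t)|^2 g(t)\,dt$ uniformly in $\bm\theta$, using the uniform Runge--Kutta bound \eqref{rk4} to replace $f_{\bm\theta,r_n}$ by $f_{\bm\theta}$ and a uniform law of large numbers over $\Theta$; the integrand is continuous in $\bm\theta$ and uniformly bounded since $\Theta$ is compact and $\bm\theta\mapsto f_{\bm\theta}$ is continuous. The cross term tends to $0$ uniformly by a maximal-inequality argument for the empirical process $\bm\theta\mapsto n^{-1}\sum_i\varepsilon_i d_{i,\bm\theta}$, whose envelope is bounded and Lipschitz in $\bm\theta$ and whose increments are controlled through the finite-MGF assumption. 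Combining these, $S_n(\bm\theta)/n$ lies in a fixed interval $[c_1,c_2]$ with $0<c_1<c_2<\infty$ for all $\bm\theta\in\Theta$ with $P_0$-probability $\to1$, so that $\beta_n(\bm\theta)/\alpha_n$ lies in a fixed interval $[m_1,m_2]\subset(0,\infty)$ on the same event.

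Next I would quantify the concentration of the inverse-gamma conditional posterior. Under it, $W=1/\sigma^2$ is $\mathrm{Gamma}(\alpha_n,\beta_n(\bm\theta))$ with $\E W=\alpha_n/\beta_n(\bm\theta)\in[1/m_2,1/m_1]$ and $\Var W=\alpha_n/\beta_n(\bm\theta)^2=(\E W)^2/\alpha_n\le(m_1^2\alpha_n)^{-1}\to0$. Choosing $u_1<m_1$ and $u_2>m_2$ makes $[1/u_2,1/u_1]$ a fixed neighbourhood of the entire range $[1/m_2,1/m_1]$ of $\E W$, so Chebyshev's inequality yields $\Pi_n(\sigma^2\notin U\mid\bm\theta,\bm X,\bm Y)=P(W\notin[1/u_2,1/u_1])=O(1/n)$ uniformly over $\bm\theta\in\Theta$ on the good event. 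Integrating this uniform bound against the marginal posterior of $\bm\theta$ gives $\Pi_n(\sigma^2\in U\mid\bm X,\bm Y)\ge1-O(1/n)$ on the event, and since that event has $P_0$-probability $\to1$, the claim $\Pi_n(\sigma^2\in U\mid\bm X,\bm Y)\stackrel{P_0}{\rightarrow}1$ follows, with $U$ independent of $\bm\theta$ and $n$.

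The main obstacle I anticipate is the uniform-in-$\bm\theta$ control of the two data-dependent averages, and especially of the cross term $n^{-1}\sum_i\varepsilon_i d_{i,\bm\theta}$, which is genuinely an empirical-process statement over the parameter set rather than a pointwise law of large numbers. The remaining ingredients (conjugacy, the Gamma moment computation, Chebyshev's inequality, and the mixture step) are routine. This uniformity is precisely what guarantees that a single compact $U$ works simultaneously for all $\bm\theta$, as the lemma asserts; it is secured by the compactness of $\Theta$, the continuity and uniform boundedness of $\bm\theta\mapsto f_{\bm\theta}$, and the uniform RK4 error bound \eqref{rk4}.
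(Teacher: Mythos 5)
Your proposal is correct and follows essentially the same route as the paper: conjugacy giving an inverse-gamma conditional posterior for $\sigma^2$ given $\bm\theta$, uniform-in-$\bm\theta$ stabilization of the scale parameter around $\sigma^2_0+\int_0^1|f_0(t)-f_{\bm\theta}(t)|^2g(t)\,dt$, a Chebyshev-type concentration bound, and compactness of $\Theta$ plus continuity of $\bm\theta\mapsto f_{\bm\theta}$ to obtain a single compact $U$. The only cosmetic differences are that you establish uniformity via an explicit three-term decomposition of $S_n(\bm\theta)/n$ with a maximal inequality for the cross term (where the paper invokes a Glivenko--Cantelli theorem for the class $\{(Y-f_{\bm\theta}(X))^2:\bm\theta\in\Theta\}$), and that you apply Chebyshev to $1/\sigma^2$ rather than to $\sigma^2$ itself.
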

\begin{proof}
Given $\bm\theta$, the conditional posterior of $\sigma^2$ is an inverse gamma distribution with shape and scale parameters $n/2+a$ and ${2}^{-1}\sum_{i=1}^n(Y_i-f_{\bm\theta}(X_i))^2+b$ respectively. Clearly $\E(\sigma^2|\bm X,\bm Y,\bm\theta)=n^{-1}\sum_{i=1}^n(Y_i-f_{\bm\theta}(X_i))^2+o(1)$ a.s. Hence, it is easy to show using the weak law of large numbers that the mean of the conditional posterior of $\sigma^2$ converges in $P_0$-probability to $\sigma^2_{\bm\theta}:=\sigma^2_0+\int_0^1\left(f_0(t)-f_{\bm\theta}(t)\right)^2g(t)dt$. Then it follows that for any $\epsilon>0$, $\Pi_n\left(\sigma^2\in[\sigma^2_{\bm\theta}-\epsilon,\sigma^2_{\bm\theta}+\epsilon]|\bm X,\bm Y,\bm\theta\right)$ converges in $P_0$-probability to $1$.
Since $\Theta$ is compact and $\sigma^2_{\bm\theta}$ is continuous in $\bm\theta$, there exists a compact set $U$ such that $U\supseteq [\sigma^2_{\bm\theta}-\epsilon,\sigma^2_{\bm\theta}+\epsilon]$ for all $\bm\theta$. Now $\Pi_n\left(\sigma^2\notin U|\bm X,\bm Y\right)$ is bounded above by
\begin{eqnarray}
\lefteqn{\int_{\Theta}\Pi_n\left(|\sigma^2-\sigma^2_{\bm\theta}|>\epsilon|\bm X,\bm Y,\bm\theta\right)d\Pi_n(\bm\theta|\bm X,\bm Y)}\nonumber\\
&\leq&\epsilon^{-2}\int_{\Theta}\left(\left(\E(\sigma^2|\bm X,\bm Y,\bm\theta)-\sigma^2_{\bm\theta}\right)^2+\Var(\sigma^2|\bm X,\bm Y,\bm\theta)\right)d\Pi_n(\bm\theta|\bm X,\bm Y)\nonumber.
\end{eqnarray}
It suffices to prove that $$\sup_{\bm\theta\in\Theta}\left|\E(\sigma^2|\bm X,\bm Y,\bm\theta)-\sigma^2_{\bm\theta}\right|=o_{P_0}(1)\, \text{and}\, \sup_{\bm\theta\in\Theta}\Var(\sigma^2|\bm X,\bm Y,\bm\theta)=o_{P_0}(1).$$
Using the facts that $\bm\theta\mapsto f_{\bm\theta}(x)$ is Lipschitz continuous and other smoothness criteria of $f_{\bm\theta}(x)$ and $f_0(x)$ and applying Theorem $19.4$ and example $19.7$ of \citet{van_der_Vaart_1998}, it follows that $\{(Y-f_{\bm\theta}(X))^2:\bm\theta\in\Theta\}$ is $P_0$-Glivenko-Cantelli and hence
\begin{eqnarray}
\sup_{\bm\theta\in\Theta}\left|\E(\sigma^2|\bm X,\bm Y,\bm\theta)-\E_0\left(\E(\sigma^2|\bm X,\bm Y,\bm\theta)\right)\right|=o_{P_0}(1).\nonumber
\end{eqnarray}
Also, it can be easily shown that the quantity $\sup_{\bm\theta\in\Theta}\left|\E_0\left(\E(\sigma^2|\bm X,\bm Y,\bm\theta)\right)-\sigma^2_{\bm\theta}\right|\rightarrow0$ as $n\rightarrow\infty$ since
\begin{eqnarray}
\E_0\left(\E(\sigma^2|\bm X,\bm Y,\bm\theta)\right)&=&\sigma^2_0+\int_0^1\left(f_0(t)-f_{\bm\theta}(t)\right)^2g(t)dt\nonumber\\
&&-\frac{2(a-1)\left(\sigma^2_0+\int_0^1\left(f_0(t)-f_{\bm\theta}(t)\right)^2g(t)dt\right)}{n+2a-2}+\frac{2b}{n+2a-2}\nonumber
\end{eqnarray}
and the parameter space $\Theta$ is compact and the mapping $\bm\theta\mapsto f_{\bm\theta}(\cdot)$ is continuous. This gives the first assertion. To see the second assertion, observe that $\Var(\sigma^2|\bm X,\bm Y,\bm\theta)=O(n^{-1})$ a.s. by the previous assertion and the fact that the conditional posterior of $\sigma^2$ given $\bm\theta$ is inverse gamma.
\end{proof}
In view of the previous lemma, we choose the parameter space for $\bm\gamma$ to be $\Theta\times U$ from now onwards. We show that the condition \eqref{cond1} holds by the following Lemma.


\begin{lemma}
For the model induced by Runge-Kutta method as described in Section 3, we have
\begin{eqnarray}
\sup_{\bm h\in K}\left|\log\frac{\prod_{i=1}^np_{\bm\gamma_0+\bm h/\sqrt{n},n}(X_i,Y_i)}{\prod_{i=1}^np_{\bm\gamma_0,n}(X_i,Y_i)}-\bm h^T\bm V_{\bm\gamma_0}\bm\Delta_{n,\gamma_0}+\frac{1}{2}\bm h^T\bm V_{\gamma_0}\bm h\right|\rightarrow0,\nonumber
\end{eqnarray}
in (outer) $P^{(n)}_0$-probability for every compact set $\bm K\subset\mathbb{R}^{p+1}$, where
$$\bm\Delta_{n,\bm\gamma_0}=\bm V^{-1}_{\bm\gamma_0}\mathbb{G}_n\dot\ell_{\bm\gamma_0,n}$$ and $\bm V_{\bm\gamma_0}=\left({\begin{array}{cc}{\sigma^{-2}_*}{\bm V_{\theta_0}}&\bm 0\\\bm 0&{\sigma^{-4}_*}/2\end{array}}\right)$ with $$\bm V_{\bm\theta_0}=\int_0^1\left(\dot{f}^T_{\bm\theta_0}(t)\dot{f}_{\bm\theta_0}(t)-\frac{\partial}{\partial\bm\theta}\left(\dot{ f}^T_{\bm\theta}(t)\left(f_0(t)-f_{\bm\theta_0}(t)\right)\right)\Big{|}_{\bm\theta=\bm\theta_0}\right)g(t)dt.$$
\end{lemma}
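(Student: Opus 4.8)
The plan is to establish the local asymptotic normality expansion through a second-order Taylor expansion of the approximate log-likelihood around $\bm\gamma_0$. Writing the left-hand side as $\sum_{i=1}^n\left(\ell_{\bm\gamma_0+\bm h/\sqrt{n},n}(X_i,Y_i)-\ell_{\bm\gamma_0,n}(X_i,Y_i)\right)$ and expanding each summand to second order in $\bm h/\sqrt{n}$, I would obtain a linear term $\bm h^T\,n^{-1/2}\sum_{i=1}^n\dot\ell_{\bm\gamma_0,n}(X_i,Y_i)$, a quadratic term $\tfrac12\bm h^T\big(n^{-1}\sum_{i=1}^n\ddot\ell_{\bm\gamma_0,n}(X_i,Y_i)\big)\bm h$, and a third-order remainder. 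The goal is then to show that the linear term converges to $\bm h^T\bm V_{\bm\gamma_0}\bm\Delta_{n,\bm\gamma_0}$, the quadratic term to $-\tfrac12\bm h^T\bm V_{\bm\gamma_0}\bm h$, and the remainder to zero, all uniformly over $\bm h\in K$.

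For the linear term I would decompose $n^{-1/2}\sum_i\dot\ell_{\bm\gamma_0,n}=\mathbb{G}_n\dot\ell_{\bm\gamma_0,n}+\sqrt{n}\,P_0\dot\ell_{\bm\gamma_0,n}$. By the definition $\bm\Delta_{n,\bm\gamma_0}=\bm V^{-1}_{\bm\gamma_0}\mathbb{G}_n\dot\ell_{\bm\gamma_0,n}$, the first piece equals exactly $\bm V_{\bm\gamma_0}\bm\Delta_{n,\bm\gamma_0}$, so it remains to show that the deterministic bias satisfies $\sqrt{n}\,P_0\dot\ell_{\bm\gamma_0,n}=o(1)$. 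Here I would use that $\bm\gamma_0$ maximizes $P_0\log p_{\bm\gamma}$ for the exact model, whence $P_0\dot\ell_{\bm\gamma_0}=\bm0$ by (A1)/\eqref{deriv_10}; the approximate score $\dot\ell_{\bm\gamma_0,n}$ differs from $\dot\ell_{\bm\gamma_0}$ only through the replacement of $f_{\bm\theta_0}$ and $\dot f_{\bm\theta_0}$ by their RK4 counterparts, so \eqref{rk4} gives $\|P_0\dot\ell_{\bm\gamma_0,n}-P_0\dot\ell_{\bm\gamma_0}\|=O(r_n^{-4})$. The grid condition \eqref{grid}, namely $r_n\gg n^{1/8}$, then yields $\sqrt{n}\,r_n^{-4}=o(1)$ and kills the bias uniformly over $\bm h\in K$.

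For the quadratic term I would first apply a uniform law of large numbers to replace $n^{-1}\sum_i\ddot\ell_{\bm\gamma_0,n}$ by $P_0\ddot\ell_{\bm\gamma_0,n}$: the relevant class of second derivatives is $P_0$-Glivenko--Cantelli by the smoothness of $F$ (hence of $f_{\bm\theta}$ and its $\bm\theta$-derivatives), by the compactness of $U$ secured via Lemma 1, and by the finite moment generating function of $\varepsilon$, which controls the factors $(Y-f_{\bm\theta_0,r_n})$. Then, again by \eqref{rk4} and \eqref{grid}, $P_0\ddot\ell_{\bm\gamma_0,n}\to P_0\ddot\ell_{\bm\gamma_0}=-\bm V_{\bm\gamma_0}$; the identification of this limit is a direct computation using \eqref{deriv_10}, since the orthogonality $\int_0^1\dot f_{\bm\theta_0}^T(f_0-f_{\bm\theta_0})g\,dt=\bm0$ forces the $\bm\theta$--$\sigma^2$ cross block to vanish, while $\sigma^2_*=\sigma_0^2+\int_0^1|f_0-f_{\bm\theta_0}|^2g\,dt$ produces the $\sigma^2$ block $\sigma_*^{-4}/2$, matching the block-diagonal $\bm V_{\bm\gamma_0}$ of (A2).

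The remainder is the routine part: on the compact set $K$ we have $\|\bm h/\sqrt{n}\|\to0$, and the third $\bm\gamma$-derivatives of $\ell_{\bm\gamma,n}$ are uniformly integrable over a neighborhood of $\bm\gamma_0$ (by boundedness of $f_{\bm\theta,r_n}$ together with its $\bm\theta$-derivatives and finiteness of all moments of $\varepsilon$), so the remainder is $O_{P_0}(n^{-1/2})$ uniformly in $\bm h\in K$. I expect the main obstacle to be the control of the deterministic bias $\sqrt{n}\,P_0\dot\ell_{\bm\gamma_0,n}$: the score vanishes only for the exact model, and this must be transferred to the RK4-based score with an error negligible at rate $\sqrt{n}$, which is precisely where the fourth-order accuracy $O(r_n^{-4})$ in \eqref{rk4} and the grid rate \eqref{grid} are essential. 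Verifying the uniform Glivenko--Cantelli and equicontinuity statements under the Gaussian working likelihood while allowing non-Gaussian errors requires some care, but follows from the finite moment generating function assumption.
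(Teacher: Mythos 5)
Your proposal is correct in outline, but it takes a genuinely different route from the paper. You run the classical Cram\'er-type LAN argument: a pointwise Taylor expansion of $\ell_{\bm\gamma,n}$ in $\bm\gamma$, a decomposition of the raw score sum $n^{-1/2}\sum_i\dot\ell_{\bm\gamma_0,n}$ into the centered part $\mathbb{G}_n\dot\ell_{\bm\gamma_0,n}$ plus the bias $\sqrt{n}P_0\dot\ell_{\bm\gamma_0,n}$, a law of large numbers for the Hessian, and a third-order remainder. The paper avoids the Hessian and third derivatives altogether: it verifies that $\bm\gamma\mapsto\log p_{\bm\gamma}$ is locally Lipschitz with a square-integrable Lipschitz constant $m(X,Y)$, invokes Lemma 19.31 of \citet{van_der_Vaart_1998} to linearize the \emph{centered} empirical process $\mathbb{G}_n\bigl(\sqrt{n}(\ell_{\bm\gamma_0+\bm h/\sqrt{n},n}-\ell_{\bm\gamma_0,n})\bigr)$ directly as $\mathbb{G}_n\bm h^T\dot\ell_{\bm\gamma_0,n}+o_{P_0}(1)$, and then computes the deterministic part $nP_0\log(p_{\bm\gamma_0+\bm h/\sqrt{n},n}/p_{\bm\gamma_0,n})$ by an explicit expansion of the Gaussian Kullback--Leibler divergence, using \eqref{deriv_10} to identify the limit $-\tfrac12\bm h^T\bm V_{\bm\gamma_0}\bm h$. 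Because the paper's drift term is centered from the start, the score-bias issue you isolate as the main obstacle never appears as a separate step; it is absorbed into the $O(r_n^{-4}\|\bm\gamma-\bm\gamma_0\|)$ terms of the KL expansion, and both proofs ultimately invoke \eqref{rk4} and \eqref{grid} at exactly the same place to make $\sqrt{n}r_n^{-4}=o(1)$. Your identifications are all sound: $P_0\dot\ell_{\bm\gamma_0}=\bm 0$ does follow from (A1)/\eqref{deriv_10} (interior optimum), and $-P_0\ddot\ell_{\bm\gamma_0}=\bm V_{\bm\gamma_0}$, with the $\bm\theta$--$\sigma^2$ cross block killed by the orthogonality relation, matches (A2). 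The trade-off is regularity: your route needs third-order differentiability of $\bm\theta\mapsto f_{\bm\theta,r_n}$ (with bounds uniform in $n$) and uniform integrability of those derivatives, which the paper never explicitly assumes beyond ``appropriately smooth,'' whereas the paper's empirical-process route needs only the score and a local Lipschitz bound; that economy in smoothness assumptions is what the extra machinery of Lemma 19.31 buys.
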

\begin{proof}
Let $G$ be an open neighborhood containing $\bm\gamma_0$. For $\bm\gamma_1,\bm\gamma_2\in G$, we have
\begin{eqnarray}
\left|\log(p_{\bm\gamma_1}(X_1,Y_1)/p_{\bm\gamma_2}(X_1,Y_1))\right|\leq m(X_1,Y_1)\|\gamma_1-\gamma_2\|,\nonumber
\end{eqnarray}
where $m(X_1,Y_1)$ is
\begin{eqnarray}
\sup\left\{\frac{|Y_1-f_{\bm\theta}(X_1)|}{\sigma^2}\|\dot{f}_{\bm\theta}(X_1)\|+\frac{(Y_1-f_{\bm\theta}(X_1))^2}{2\sigma^{4}}+\frac{1}{2\sigma^2}: (\bm\theta,\sigma^2)\in G\right\},\nonumber
\end{eqnarray}
which is square integrable. Therefore, by Lemma 19.31 of \citet{van_der_Vaart_1998}, for any sequence $\{\bm h_n\}$ bounded in $P_0$-probability, $$\mathbb{G}_n\left(\sqrt{n}(\ell_{\bm\gamma_0+(\bm h_n/\sqrt{n})}-\ell_{\bm\gamma_0})-\bm h^T_n\dot{\bm\ell}_{\bm\gamma_0}\right)=o_{P_0}(1).$$ Using the laws of large numbers and \eqref{rk4}, we find that $$\mathbb{G}_n\left(\sqrt{n}(\ell_{\bm\gamma_0+(\bm h_n/\sqrt{n})}-\ell_{\bm\gamma_0})-\bm h^T_n\dot{\bm\ell}_{\bm\gamma_0}\right)-\mathbb{G}_n\left(\sqrt{n}(\ell_{\bm\gamma_0+(\bm h_n/\sqrt{n}),n}-\ell_{\bm\gamma_0,n})-\bm h^T_n\dot{\bm\ell}_{\bm\gamma_0,n}\right)$$ is $O_{P_0}(\sqrt{n}r^{-4}_n)$ which is $o_{P_0}(1)$ by the condition \eqref{grid} on $r_n$. Hence, $$\mathbb{G}_n\left(\sqrt{n}(\ell_{\bm\gamma_0+(\bm h_n/\sqrt{n}),n}-\ell_{\bm\gamma_0,n})-\bm h^T_n\dot{\bm\ell}_{\bm\gamma_0,n}\right)=o_{P_0}(1).$$
We note that
\begin{eqnarray}
\lefteqn{-P_0\log(p_{\bm\gamma,n}/p_{\bm\gamma_0,n})}\nonumber\\
&=&\log\sigma-\log\sigma_*+\frac{1}{2\sigma^2}\left[\sigma^2_0+\int_0^1|f_0(t)-f_{\bm\theta,r_n}(t)|^2g(t)dt\right]\nonumber\\
&&-\frac{1}{2\sigma^2_*}\left[\sigma^2_0+\int_0^1|f_0(t)-f_{\bm\theta_0,r_n}(t)|^2g(t)dt\right]\nonumber\\
&=&\log\sigma-\log\sigma_*+\left(\frac{1}{2\sigma^2}-\frac{1}{2\sigma^2_*}\right)\left[\sigma^2_0+\int_0^1|f_0(t)- f_{\bm\theta,r_n}(t)|^2g(t)dt\right]\nonumber\\
&&+\frac{1}{2\sigma^2_*}\left[2\int_0^1\left(f_0(t)-f_{\bm\theta_0,r_n}(t)\right)\left(f_{\bm\theta_0,r_n}(t)-f_{\bm\theta,r_n}(t)\right)g(t)dt\right.\nonumber\\
&&\left.+\int_0^1|f_{\bm\theta_0,r_n}(t)-f_{\bm\theta,r_n}(t)|^2g(t)dt\right].\label{lemma1_1}
\end{eqnarray}
Using \eqref{deriv_10}, the last term inside the third bracket in \eqref{lemma1_1} can be expanded as
\begin{eqnarray}
(\bm\theta-\bm\theta_0)^T\bm V_{\bm\theta_0}(\bm\theta-\bm\theta_0)+O\left(r^{-4}_n\|\bm\theta-\bm\theta_0\|\right)+o\left(\|\bm\theta-\bm\theta_0\|^2\right)\nonumber,
\end{eqnarray}
where $\bm V_{\bm\theta_0}=\int_0^1\left(\dot{f}^T_{\bm\theta_0}(t)\dot{f}_{\bm\theta_0}(t)-\frac{\partial}{\partial\bm\theta}\left(\dot{ f}^T_{\bm\theta}(t)\left(f_0(t)-f_{\bm\theta_0}(t)\right)\right)\Big{|}_{\bm\theta=\bm\theta_0}\right)g(t)dt$. Also, writing $\sigma^2_*=\sigma^2_0+\int_0^1|f_0(t)-f_{\bm\theta_0}(t)|^2g(t)dt$ and using \eqref{deriv_10}, the first term in \eqref{lemma1_1} is given by
\begin{eqnarray}
&&-\frac{1}{2}\log\left(\frac{\sigma^2_*}{\sigma^2}-1+1\right)+\frac{1}{2}\left(\frac{\sigma^2_*}{\sigma^2}-1\right)+O\left(r^{-4}_n\|\bm\gamma-\bm\gamma_0\|\right)+o(\|\bm\gamma-\bm\gamma_0\|^2)\nonumber\\
&&=\frac{(\sigma^2-\sigma^2_*)^2}{4\sigma^4_*}+O\left(r^{-4}_n\|\bm\gamma-\bm\gamma_0\|\right)+o(\|\bm\gamma-\bm\gamma_0\|^2).\nonumber
\end{eqnarray}
Hence,
\begin{eqnarray}
P_0\log\frac{p_{\bm\gamma_0+\bm h_n/\sqrt{n},n}}{p_{\bm\gamma_0,n}}+\frac{1}{2n}\bm h_n^T\bm V_{\bm\gamma_0}\bm h_n=o(n^{-1}).\label{lemma1_2}
\end{eqnarray}
We have already shown that
\begin{eqnarray}
n\mathbb{P}_n\log\frac{p_{\bm\gamma_0+\bm h_n/\sqrt{n},n}}{p_{\bm\gamma_0,n}}-\mathbb{G}_n\bm h^T_n\dot{\bm\ell}_{\bm\gamma_0,n}-nP_0\log\frac{p_{\bm\gamma_0+\bm h_n/\sqrt{n},n}}{p_{\bm\gamma_0,n}}=o_{P_0}(1).\label{lemma1_3}
\end{eqnarray}
Substituting \eqref{lemma1_2} in \eqref{lemma1_3}, we get the desired result.
\end{proof}
Now our objective is to prove \eqref{cond2}. We define the measure $Q_{\bm\gamma}(A)=P_0\left({p_{\bm\gamma}}/{p_{\bm\gamma_0}}\mathrm{l}\!\!\!1_A\right)$ and the corresponding density $q_{\bm\gamma}=p_0p_{\bm\gamma}/p_{\bm\gamma_0}$ as given in \citet{kleijn2012bernstein}. Also, we define a measure $Q_{\bm\gamma,n}$ by $Q_{\bm\gamma,n}(A)=P_0\left({p_{\bm\gamma,n}}/{p_{\bm\gamma_0,n}}\mathrm{l}\!\!\!1_A\right)$ with $q_{\bm\gamma,n}=p_0p_{\bm\gamma,n}/p_{\bm\gamma_0,n}$. The misspecified Kullback-Leibler neighborhood of $\bm\gamma_0$ is defined as
\begin{eqnarray}
B(\epsilon,\bm\gamma_0,P_0)=\left\{\bm\gamma\in\bm\Theta\times U:-P_0\log\left(\frac{p_{\bm\gamma,n}}{p_{\bm\gamma_0,n}}\right)\leq\epsilon^2,P_0\left(\log\left(\frac{p_{\bm\gamma,n}}{p_{\bm\gamma_0,n}}\right)\right)^2\leq\epsilon^2\right\}\nonumber
\end{eqnarray}
By Theorem 3.1 of \citet{kleijn2012bernstein}, condition \eqref{cond2} is satisfied if we can ensure that for every $\epsilon>0$, there exists a sequence of tests $\{\phi_n\}$ such that
\begin{eqnarray}
P^{(n)}_0\phi_n\rightarrow0,\,\,\,\sup_{\{\bm\gamma:\|\bm\gamma-\bm\gamma_0\|\geq\epsilon\}}Q^{(n)}_{\bm\gamma,n}(1-\phi_n)\rightarrow0.\label{cond3}
\end{eqnarray}
The above condition is ensured by the next Lemma.


\begin{lemma}
Assume that $\bm\gamma_0$ is a unique point of minimum of $\bm\gamma\mapsto-P_0\log p_{\bm\gamma}$. Then there exist tests $\phi_n$ satisfying \eqref{cond3}.
\end{lemma}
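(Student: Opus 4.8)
The plan is to take $\phi_n$ to be a likelihood-ratio test and to verify the two requirements in \eqref{cond3} separately: the first by a uniform law of large numbers and the second by an elementary change-of-measure bound. The driving input is the separation furnished by the uniqueness of $\bm\gamma_0$. Since $\Theta\times U$ is compact and $\bm\gamma\mapsto P_0\log(p_{\bm\gamma}/p_{\bm\gamma_0})$ is continuous with unique maximum value $0$ attained at $\bm\gamma_0$, for each $\epsilon>0$ the gap $K_\epsilon:=\inf_{\|\bm\gamma-\bm\gamma_0\|\ge\epsilon}\{-P_0\log(p_{\bm\gamma}/p_{\bm\gamma_0})\}$ is strictly positive. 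Writing $W_{\bm\gamma}:=\log(p_{\bm\gamma,n}/p_{\bm\gamma_0,n})$, the bounds \eqref{rk4} show that $P_0W_{\bm\gamma}$ differs from $P_0\log(p_{\bm\gamma}/p_{\bm\gamma_0})$ by a quantity that is $O(r_n^{-4})$ uniformly in $\bm\gamma$, so for all large $n$ one has $\sup_{\|\bm\gamma-\bm\gamma_0\|\ge\epsilon}P_0W_{\bm\gamma}\le-K_\epsilon/2$. I would then fix $c\in(0,K_\epsilon/2)$ and set
\[
\phi_n=\mathrm{l}\!\!\!1\Big\{\sup_{\|\bm\gamma-\bm\gamma_0\|\ge\epsilon}\mathbb{P}_nW_{\bm\gamma}>-c\Big\}.
\]

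For $P_0^{(n)}\phi_n\to0$ I would invoke a uniform law of large numbers. As in the proof of Lemma $1$, the family $\{(Y-f_{\bm\theta}(X))^2:\bm\theta\in\Theta\}$ is $P_0$-Glivenko-Cantelli; since $W_{\bm\gamma}$ is an affine function of such a squared residual with coefficients depending continuously on $\sigma^2$ over the compact set $U$, and since the RK$4$ gap is uniform, this gives $\sup_{\bm\gamma}|\mathbb{P}_nW_{\bm\gamma}-P_0W_{\bm\gamma}|=o_{P_0}(1)$. Because $P_0W_{\bm\gamma}\le-K_\epsilon/2<-c$ uniformly on the alternative, the event defining $\phi_n$ has $P_0^{(n)}$-probability tending to $0$.

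The second requirement is handled by a one-line change of measure that needs no moment bounds. The $n$-fold product $Q^{(n)}_{\bm\gamma,n}$ has Radon-Nikodym derivative $dQ^{(n)}_{\bm\gamma,n}/dP_0^{(n)}=\prod_{i=1}^n(p_{\bm\gamma,n}/p_{\bm\gamma_0,n})(X_i,Y_i)=e^{n\mathbb{P}_nW_{\bm\gamma}}$ with respect to $P_0^{(n)}$. For each fixed $\bm\gamma$ with $\|\bm\gamma-\bm\gamma_0\|\ge\epsilon$ the event $\{1-\phi_n=1\}$ is contained in $\{\mathbb{P}_nW_{\bm\gamma}\le-c\}$, on which the density ratio is at most $e^{-nc}$; hence
\[
Q^{(n)}_{\bm\gamma,n}(1-\phi_n)=\E_0\big[e^{n\mathbb{P}_nW_{\bm\gamma}}(1-\phi_n)\big]\le e^{-nc},
\]
and this bound is uniform over the alternative, so the supremum in \eqref{cond3} tends to $0$.

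The main obstacle is thus the first requirement rather than the second: I must establish the uniform law of large numbers for the triangular array $\{W_{\bm\gamma}\}$, whose members depend on $n$ through the RK$4$ solution $f_{\bm\theta,r_n}$, and simultaneously transfer the population separation $K_\epsilon$ — defined through the exact densities $p_{\bm\gamma}$ — to the discretized log-ratios $W_{\bm\gamma}$. Both are resolved by combining the Glivenko-Cantelli argument of Lemma $1$ with the uniform approximation \eqref{rk4}, the condition \eqref{grid} on $r_n$ ensuring that the discretization error is asymptotically negligible. Assembling the two conclusions verifies \eqref{cond3}; equivalently, one may phrase the argument as the verification of the Kullback-Leibler separation hypothesis behind the test-existence statement invoked via Theorem $3.1$ of \citet{kleijn2012bernstein}.
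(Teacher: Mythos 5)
Your proof is correct, but it takes a genuinely different route from the paper's. The paper follows the Kleijn--van der Vaart template: for each fixed alternative $\bm\gamma_1$ it builds a pointwise test $\phi_{n,\bm\gamma_1}=\mathrm{l}\!\!\!1\{\mathbb{P}_n\log(p_0/q_{\bm\gamma_1,n})<0\}$, bounds its type-II error by a Chernoff-type argument involving the fractional-moment function $\rho(\bm\gamma_1,\bm\gamma,s)=\int p_0^s q_{\bm\gamma_1}^{-s}q_{\bm\gamma}\,d\mu$, invokes \citet{kleijn2006misspecification} to produce $s_{\bm\gamma_1}<1$ with $\rho(\bm\gamma_1,\bm\gamma_1,s_{\bm\gamma_1})<1$, uses continuity to get $\sup_{\bm\gamma\in G_{\bm\gamma_1}}\rho(\bm\gamma_1,\bm\gamma,s_{\bm\gamma_1})<1$ on a neighborhood, and finally covers the compact alternative by finitely many such neighborhoods and takes the maximum of the corresponding tests. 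You instead use a single global test thresholding $\sup_{\|\bm\gamma-\bm\gamma_0\|\geq\epsilon}\mathbb{P}_nW_{\bm\gamma}$ at $-c$, with type-I error controlled by a uniform law of large numbers and type-II error controlled by the exact change-of-measure identity $dQ^{(n)}_{\bm\gamma,n}/dP_0^{(n)}=e^{n\mathbb{P}_nW_{\bm\gamma}}$; the latter is legitimate here precisely because $q_{\bm\gamma,n}=p_0p_{\bm\gamma,n}/p_{\bm\gamma_0,n}$, so no normalization of the (possibly non-probability) measures $Q_{\bm\gamma,n}$ is needed, and the bound $e^{-nc}$ is uniform and non-asymptotic. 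Both arguments lean on compactness of $\Theta\times U$ and on \eqref{rk4} with \eqref{grid} to transfer between $p_{\bm\gamma,n}$ and $p_{\bm\gamma}$; what yours buys is the elimination of the fractional-moment and finite-covering machinery (and of the appeal to \citet{kleijn2006misspecification}) together with an explicit exponential type-II bound, at the price of requiring a \emph{uniform} law of large numbers over the whole alternative rather than pointwise ones --- a price that is essentially already paid in this paper, since the Glivenko--Cantelli property of $\{(Y-f_{\bm\theta}(X))^2:\bm\theta\in\Theta\}$ is established in Lemma 1 and extends to $\{\log(p_{\bm\gamma}/p_{\bm\gamma_0}):\bm\gamma\in\Theta\times U\}$ because the coefficient $1/(2\sigma^2)$ ranges over a compact set bounded away from zero. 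If you write this up, state explicitly the measurability of the supremum defining $\phi_n$ (it holds since $\bm\gamma\mapsto W_{\bm\gamma}(x,y)$ is continuous on a separable parameter set, and the paper in any case works with outer probability), and spell out the uniform-in-$\bm\gamma$ nature of the $O(r_n^{-4})$ transfer, which your argument uses in both halves.
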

\begin{proof}
For given $\bm\gamma_1\neq\bm\gamma_0$ consider the tests $\phi_{n,\bm\gamma_1}=\mathrm{l}\!\!\!1\{\mathbb{P}_n\log(p_0/q_{\bm\gamma_1,n})<0\}$.
Note that $\mathbb{P}_n\log(p_0/q_{\bm\gamma_1,n})=\mathbb{P}_n\log(p_0/q_{\bm\gamma_1})+O_{P_0}(r^{-4}_n)\stackrel{P^{(n)}_0}\rightarrow P_0\log(p_0/q_{\bm\gamma_1})$
and $P_0\log(p_0/q_{\bm\gamma_1})=P_0\log(p_{\bm\gamma_0}/p_{\bm\gamma_1})>0$ for $\bm\gamma_1\neq\bm\gamma_0$ by the definition of $\bm\gamma_0$. Hence, $P^{(n)}_0\phi_{n,\bm\gamma_1}\rightarrow0$ as $n\rightarrow\infty$. By Markov's inequality we have that
\begin{eqnarray}
Q^{(n)}_{\bm\gamma,n}(1-\phi_{n,\bm\gamma_1})&=&Q^{(n)}_{\bm\gamma,n}(\exp\{sn\mathbb{P}_n\log(p_0/q_{\bm\gamma_1,n})\}>1)\nonumber\\
&\leq&Q^{(n)}_{\bm\gamma,n}\exp\{sn\mathbb{P}_n\log(p_0/q_{\bm\gamma_1,n})\}\nonumber\\
&=&\left(Q_{\bm\gamma,n}(p_0/q_{\bm\gamma_1,n})^s\right)^n=\left(\rho(\bm\gamma_1,\bm\gamma,s)+O(r^{-4}_n)\right)^n,\nonumber
\end{eqnarray}
for $\rho(\bm\gamma_1,\bm\gamma,s)=\int p^s_0q^{-s}_{\bm\gamma_1}q_{\bm\gamma}d\mu$. By \citet{kleijn2006misspecification} the function $s\mapsto\rho(\bm\gamma_1,\bm\gamma_1,s)$ converges to $P_0(q_{\bm\gamma_1}>0)=P_0(p_{\bm\gamma_1}>0)$ as $s\uparrow1$ and has left derivative $P_0\log\left(\frac{q_{\bm\gamma_1}}{p_0}\right)1\!\mathrm{l}\{q_{\bm\gamma_1}>0\}=P_0\log\left(\frac{p_{\bm\gamma_1}}{p_{\bm\gamma_0}}\right)1\!\mathrm{l}\{p_{\bm\gamma_1}>0\}$ at $s=1$. Then either $P_0(p_{\bm\gamma_1}>0)<1$ or $P_0(p_{\bm\gamma_1}>0)=1$ and $P_0\log\left(\frac{p_{\bm\gamma_1}}{p_{\bm\gamma_0}}\right)1\!\mathrm{l}\{p_{\bm\gamma_1}>0\}=P_0\log\left(\frac{p_{\bm\gamma_1}}{p_{\bm\gamma_0}}\right)<0$ or both. In either case it follows that there exists $s_{\bm\gamma_1}<1$ arbitrarily close to 1 such that $\rho(\bm\gamma_1,\bm\gamma_1,s_{\bm\gamma_1})<1$. It is easy to show that the map $\bm\gamma\mapsto\rho(\bm\gamma_1,\bm\gamma,s_{\bm\gamma_1})$
is continuous at $\bm\gamma_1$ by the dominated convergence theorem. Therefore, for every $\bm\gamma_1$, there exists an open neighborhood $G_{\bm\gamma_1}$ such that
\begin{eqnarray}
u_{\bm\gamma_1}=\sup_{\bm\gamma\in G_{\bm\gamma_1}}\rho(\bm\gamma_1,\bm\gamma,s_{\bm\gamma_1})<1\nonumber.
\end{eqnarray}
The set $\{\bm\gamma\in\bm\Theta\times U:\|\bm\gamma-\bm\gamma_0\|\geq\epsilon\}$ is compact and hence can be covered with finitely many sets of the type $G_{\bm\gamma_i}$ for $i=1,\ldots,k$. Let us define $\phi_n=\max_{i}\{\phi_{n,\bm\gamma_i}:\,i=1,\ldots,k\}$.
This test satisfies $P^{(n)}_0\phi_n\leq\sum_{i=1}^kP^{(n)}_0\phi_{n,\bm\gamma_i}\rightarrow0$, and
\begin{eqnarray}
Q^{(n)}_{\bm\gamma,n}(1-\phi_n)&\leq&\max_{i=1,\ldots,k}Q^{(n)}_{\bm\gamma,n}(1-\phi_{n,\bm\gamma_i})\leq\max_{i=1,\ldots,k}(u_{\bm\gamma_i}+O(r^{-4}_n))^n\rightarrow0\nonumber
\end{eqnarray}
uniformly in $\bm\gamma\in\cup_{i=1}^kG_{\bm\gamma_i}$. Therefore, the tests $\phi_n$ meet \eqref{cond3}.
\end{proof}
The proof of Theorem 3.1 of \citet{kleijn2012bernstein} also uses the results of the next two lemmas.


\begin{lemma}
Suppose that $P_0\dot{\bm\ell}_{\bm\gamma_0}\dot{\bm\ell}^T_{\bm\gamma_0}$ is invertible. Then for every sequence $\{M_n\}$ such that $M_n\rightarrow\infty$, there exists a sequence of tests $\{\omega_n\}$ such that for some constant $D>0$, $\epsilon>0$ and large enough $n$,
\begin{eqnarray}
P^{(n)}_0\omega_n\rightarrow0,\,\,\,Q^{(n)}_{\bm\gamma,n}(1-\omega_n)\leq e^{-nD(\|\bm\gamma-\bm\gamma_0\|^2\wedge\epsilon^2)},\nonumber
\end{eqnarray}
for all $\bm\gamma\in\Theta\times U$ such that $\|\bm\gamma-\bm\gamma_0\|\geq M_n/\sqrt{n}$.
\end{lemma}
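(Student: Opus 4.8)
The plan is to follow the test-construction scheme underlying Theorem 3.1 of \citet{kleijn2012bernstein}, adapting it to the approximate experiment generated by the RK$4$ densities $p_{\bm\gamma,n}$. First I would split the region $\{\bm\gamma\in\Theta\times U:\|\bm\gamma-\bm\gamma_0\|\geq M_n/\sqrt n\}$ into a fixed ``bulk'' $\{\|\bm\gamma-\bm\gamma_0\|\geq\epsilon\}$ and a shrinking ``annulus'' $\{M_n/\sqrt n\leq\|\bm\gamma-\bm\gamma_0\|<\epsilon\}$. On the bulk the tests already constructed in Lemma 3 satisfy $Q^{(n)}_{\bm\gamma,n}(1-\phi_n)\leq(\max_i u_{\bm\gamma_i}+O(r^{-4}_n))^n\leq e^{-c_0 n}$ for large $n$, which is exactly the asserted bound there because $\|\bm\gamma-\bm\gamma_0\|^2\wedge\epsilon^2=\epsilon^2$; thus only the annulus requires genuinely new work.

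On the annulus I would decompose into dyadic shells $S_j=\{\bm\gamma:2^{j}M_n/\sqrt n\leq\|\bm\gamma-\bm\gamma_0\|<2^{j+1}M_n/\sqrt n\}$ and, on each shell, use the likelihood-ratio tests $\phi_{n,\bm\gamma_1}=\mathrm{l}\!\!\!1\{\mathbb{P}_n\log(p_0/q_{\bm\gamma_1,n})<0\}$ centered at finitely many points $\bm\gamma_1$, whose errors are governed, as in Lemma 3, by the $n$-th power of $\rho(\bm\gamma_1,\bm\gamma,s)$ plus an RK$4$ correction. The essential new ingredient is a \emph{quadratic lower bound} on the per-observation divergence: from the local expansion of Lemma 2, the positive definiteness of $\bm V_{\bm\gamma_0}$, and the invertibility of $P_0\dot{\bm\ell}_{\bm\gamma_0}\dot{\bm\ell}^T_{\bm\gamma_0}$, one obtains $-\log\rho(\bm\gamma_1,\bm\gamma_1,s)\geq c\|\bm\gamma_1-\bm\gamma_0\|^2$ for a fixed $s<1$ and all $\bm\gamma_1$ near $\bm\gamma_0$, so both testing errors decay like $e^{-cn\|\bm\gamma_1-\bm\gamma_0\|^2}\asymp e^{-c\,4^{j}M_n^2}$. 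Since $\Theta\times U$ is finite-dimensional, each $S_j$ is covered by a number of balls bounded uniformly in $j$ and $n$; taking $\omega_n$ to be the maximum of all these local tests and the bulk test then keeps the type I error at $\sum_j O(1)e^{-c\,4^{j}M_n^2}\leq Ce^{-cM_n^2}\to0$ (as $M_n\to\infty$), while for $\bm\gamma\in S_j$ the type II error is at most a constant times $e^{-c\,4^{j}M_n^2}\leq e^{-nD\|\bm\gamma-\bm\gamma_0\|^2}$, which is precisely the required form.

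The hard part will be ensuring the RK$4$ error does not swamp the quadratic divergence on the innermost shells, where $\|\bm\gamma_1-\bm\gamma_0\|^2$ is only of order $M_n^2/n$. In the bulk Lemma 3 could afford the crude bound $\rho_n=\rho+O(r^{-4}_n)$, but inside the annulus the base $\rho(\bm\gamma_1,\bm\gamma_1,s)=1-c\|\bm\gamma_1-\bm\gamma_0\|^2+o(\|\bm\gamma_1-\bm\gamma_0\|^2)$ is so close to $1$ that a flat $O(r^{-4}_n)$ perturbation would dominate and ruin the bound. The resolution I would pursue is a \emph{sharpened} approximation estimate exploiting the fact that the integrand defining $\rho$ vanishes identically at $\bm\gamma_1=\bm\gamma=\bm\gamma_0$ (there $q_{\bm\gamma_0,n}=p_0$ and the affinity equals $1$ regardless of $r_n$): the two squared-error terms carrying the $O(r^{-4}_n)$ discrepancy cancel to leading order, leaving $\rho_n(\bm\gamma_1,\bm\gamma_1,s)=\rho(\bm\gamma_1,\bm\gamma_1,s)+O(r^{-4}_n\|\bm\gamma_1-\bm\gamma_0\|)$. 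With the correction now linear in the distance, $(e^{-c\|\bm\gamma_1-\bm\gamma_0\|^2}+O(r^{-4}_n\|\bm\gamma_1-\bm\gamma_0\|))^n\leq e^{-nD\|\bm\gamma_1-\bm\gamma_0\|^2}$ holds as soon as $r^{-4}_n\lesssim\|\bm\gamma_1-\bm\gamma_0\|$, i.e. as soon as $\sqrt n\,r^{-4}_n\lesssim M_n$, which is guaranteed for large $n$ by the grid condition $r_n\gg n^{1/8}$ (forcing $\sqrt n\,r^{-4}_n\to0$) together with $M_n\to\infty$. Verifying this cancellation uniformly over the annulus, and checking it survives the passage to a supremum over each covering ball, is the step demanding the most care.
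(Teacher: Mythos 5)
Your proposal is correct in substance, but it proves the lemma by a genuinely different route from the paper. The paper, mirroring the proof of Theorem 3.3 of \citet{kleijn2012bernstein}, treats the whole annulus $\{M_n/\sqrt{n}\leq\|\bm\gamma-\bm\gamma_0\|\leq\epsilon\}$ with a \emph{single} test built from the truncated score at $\bm\gamma_0$, namely $\omega_{1,n}=\mathrm{l}\!\!\!1\{\|(\mathbb{P}_n-P_0)\dot{\bm\ell}^L_{\bm\gamma_0,n}\|>\sqrt{M_n/n}\}$: its type I error vanishes by stochastic boundedness of the centered empirical score, and the quadratic exponent in the type II bound comes from projecting onto the direction $(\bm\gamma-\bm\gamma_0)/\|\bm\gamma-\bm\gamma_0\|$, showing the shift of the statistic under $Q_{\bm\gamma,n}$ is of order $(\bm\gamma-\bm\gamma_0)^TP_0(\dot{\bm\ell}_{\bm\gamma_0,n}\dot{\bm\ell}^T_{\bm\gamma_0,n})(\bm\gamma-\bm\gamma_0)$ (this is where invertibility of $P_0\dot{\bm\ell}_{\bm\gamma_0}\dot{\bm\ell}^T_{\bm\gamma_0}$ enters), then applying Bernstein's inequality to the bounded variables $W_i$ together with control of the unnormalized mass $\|Q_{\bm\gamma,n}\|^n$; no covering of the annulus is ever needed. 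You instead run a Le Cam--Birg\'e covering argument: dyadic shells and Chernoff bounds on the R\'enyi-type affinity $\rho$ for likelihood-ratio tests at boundedly many centers per shell. This is workable, but note two points you defer: (i) you need $\rho(\bm\gamma_1,\bm\gamma_1,s)\leq1-c\|\bm\gamma_1-\bm\gamma_0\|^2$ for a \emph{single fixed} $s<1$ uniformly over the annulus, whereas Lemma 3 only needed $\rho<1$ at finitely many points with $s_{\bm\gamma_1}$ depending on the center; this does follow from the quadratic expansion of Lemma 2, assumption (A2) and the second-moment bound used in Lemma 5, provided $1-s$ is taken small; and (ii) the proportional-radius covering (constantly many balls per shell) is legitimate because $P_0\dot{\bm\ell}_{\bm\gamma_0}=\bm 0$ keeps the variation of $\bm\gamma\mapsto\rho(\bm\gamma_1,\bm\gamma,s)$ over a ball of radius $\delta\|\bm\gamma_1-\bm\gamma_0\|$ of order $\delta\|\bm\gamma_1-\bm\gamma_0\|^2$. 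Your diagnosis of the RK4 obstruction and its cure are sound: since $q_{\bm\gamma_0,n}=p_0$ identically, the discrepancy enters only through $\log(p_{\bm\gamma,n}/p_{\bm\gamma})-\log(p_{\bm\gamma_0,n}/p_{\bm\gamma_0})$, which by \eqref{rk4} (both the solution and its $\bm\theta$-derivative are approximated to $O(r_n^{-4})$) is Lipschitz in $\bm\gamma$ with constant $O(r_n^{-4})$ times an integrable envelope; this linear-in-distance bound $O(r_n^{-4}\|\bm\gamma-\bm\gamma_0\|)$ is exactly what the paper itself records in Lemmas 2 and 5, and under \eqref{grid} it is $o(\|\bm\gamma-\bm\gamma_0\|^2)$ uniformly on $\|\bm\gamma-\bm\gamma_0\|\geq M_n/\sqrt{n}$ since $\sqrt{n}\,r_n^{-4}/M_n\rightarrow0$. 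The trade-off: the paper's single-test construction avoids covering and affinity expansions altogether, at the cost of truncation bookkeeping and handling unnormalized measures; yours is more classical and self-contained but must verify uniform affinity expansions on every shell. Your bulk treatment (reusing the Lemma 3 tests directly) is also lighter than the paper's appeal to Lemma 3.3 of \citet{kleijn2012bernstein}, and it suffices because the lemma only demands $P^{(n)}_0\omega_n\rightarrow0$, not an exponential type I bound.
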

\begin{proof}
Let $\{M_n\}$ be given. We construct two sequences of tests. The first sequence is used to test $P_0$ versus $\{Q_{\bm\gamma,n}:\bm\gamma\in(\Theta\times U)_1\}$ with $(\Theta\times U)_1=\{\bm\gamma\in\Theta\times U:M_n/\sqrt{n}\leq \|\bm\gamma-\bm\gamma_0\|\leq\epsilon\}$ and the second to test $P_0$ versus $\{Q_{\bm\gamma,n}:\bm\gamma\in(\Theta\times U)_2\}$ with $(\Theta\times U)_2=\{\bm\gamma\in\Theta\times U:\|\bm\gamma-\bm\gamma_0\|>\epsilon\}$. These two sequences are combined to test $P_0$ versus $\{Q_{\bm\gamma,n}:\|\bm\gamma-\bm\gamma_0\|\geq M_n/\sqrt{n}\}$.\\
To construct the first sequence, a constant $L>0$ is chosen to truncate the score-function, that is, $\dot{\bm\ell}^L_{\gamma_0}=0$ if $\|\dot{\bm\ell}_{\bm\gamma_0}\|>L$ and $\dot{\bm\ell}^L_{\bm\gamma_0}=\dot{\bm\ell}_{\bm\gamma_0}$ otherwise. Similarly we define $\dot{\bm\ell}^L_{\bm\gamma_0,n}$. We define
\begin{eqnarray}
\omega_{1,n}=\mathrm{l}\!\!\!1\left\{\|(\mathbb{P}_n-P_0)\dot{\bm\ell}^L_{\bm\gamma_0,n}\|>\sqrt{M_n/{n}}\right\}.\nonumber
\end{eqnarray}
Since the function $\dot{\bm\ell}_{\bm\gamma_0}$ is square-integrable, we observe that the matrices $P_0\dot{\bm\ell}_{\bm\gamma_0,n}\dot{\bm\ell}^T_{\bm\gamma_0,n}$, $P_0\dot{\bm\ell}_{\bm\gamma_0,n}(\dot{\bm\ell^L}_{\bm\gamma_0,n})^T$ and $P_0\dot{\bm\ell}^L_{\bm\gamma_0,n}(\dot{\bm\ell^L})^T_{\bm\gamma_0,n}$ can be made sufficiently close to each other for sufficiently large choices of $L$ and $n$. We fix such an $L$. Now,
\begin{eqnarray}
P^{(n)}_0\omega_{1,n}&=&P^{(n)}_0\left(\|\sqrt{n}(\mathbb{P}_n-P_0)\dot{\bm\ell}^L_{\bm\gamma_0,n}\|^2>M_n\right)\nonumber\\
&\leq&P^{(n)}_0\left(\|\sqrt{n}(\mathbb{P}_n-P_0)\dot{\bm\ell}^L_{\bm\gamma_0}\|^2>M_n/4\right)\nonumber\\
&&+P^{(n)}_0\left(\|\sqrt{n}(\mathbb{P}_n-P_0)(\dot{\bm\ell}^L_{\bm\gamma_0,n}-\dot{\bm\ell}^L_{\bm\gamma_0})\|^2>M_n/4\right).\nonumber
\end{eqnarray}
The right hand side of the above inequality converges to zero since both sequences inside the brackets are stochastically bounded. The rest of the proof follows from the proof of Theorem 3.3 of \citet{kleijn2012bernstein} and Lemma $2$.
As far as $Q^{(n)}_{\bm\gamma,n}(1-\omega_{1,n})$ for $\bm\gamma\in\left(\Theta\times U\right)_1$ is concerned, for all $\bm\gamma$
\begin{eqnarray}
\lefteqn{Q^{(n)}_{\bm\gamma,n}\left(\|(\mathbb{P}_n-P_0)\dot{\bm\ell}^L_{\bm\gamma_0,n}\|\leq\sqrt{M_n/{n}}\right)}\nonumber\\
&=&Q^{(n)}_{\bm\gamma,n}\left(\sup_{\bm v\in S}\bm v^T(\mathbb{P}_n-P_0)\dot{\bm\ell}^L_{\bm\gamma_0,n}\leq\sqrt{M_n/{n}}\right)\nonumber\\
&\leq&\inf_{\bm v\in S}Q^{(n)}_{\bm\gamma,n}\left(\bm v^T(\mathbb{P}_n-P_0)\dot{\bm\ell}^L_{\bm\gamma_0,n}\leq\sqrt{M_n/{n}}\right),\nonumber
\end{eqnarray}
where $S$ is the unit sphere in $\mathbb{R}^{p+1}$. Choosing $\bm v=(\bm\gamma-\bm\gamma_0)/\|\bm\gamma-\bm\gamma_0\|$, the right hand side of the previous display can be bounded by
\begin{eqnarray}
\lefteqn{Q^{(n)}_{\bm\gamma,n}\left((\bm\gamma-\bm\gamma_0)^T(\mathbb{P}_n-P_0)\dot{\bm\ell}^L_{\bm\gamma_0,n}\leq\sqrt{M_n/n}\|\bm\gamma-\bm\gamma_0\|\right)}\nonumber\\
&=&Q^{(n)}_{\bm\gamma,n}\left((\bm\gamma_0-\bm\gamma)^T(\mathbb{P}_n-\tilde{Q}_{\bm\gamma,n})\dot{\bm\ell}^L_{\bm\gamma_0,n}\geq (\bm\gamma-\bm\gamma_0)^T(\tilde{Q}_{\bm\gamma,n}-\tilde{Q}_{\bm\gamma_0,n})\dot{\bm\ell}^L_{\bm\gamma_0,n}\right.\nonumber\\
&&\left.-\sqrt{M_n/n}\|\bm\gamma-\bm\gamma_0\|\right),\nonumber
\end{eqnarray}
where $\tilde{Q}_{\bm\gamma,n}=\|{Q}_{\bm\gamma,n}\|^{-1}Q_{\bm\gamma,n}$ and also note that $P_0=Q_{\bm\gamma_0,n}=\tilde{Q}_{\bm\gamma_0,n}$. It should be noted that
\begin{eqnarray}
\lefteqn{(\bm\gamma-\bm\gamma_0)^T(\tilde{Q}_{\bm\gamma,n}-\tilde{Q}_{\bm\gamma_0,n})\dot{\bm\ell}^L_{\bm\gamma_0,n}}\nonumber\\
&=&\left(P_0(p_{\bm\gamma,n}/p_{\bm\gamma_0,n})\right)^{-1}(\bm\gamma-\bm\gamma_0)^T\left(P_0\left((p_{\bm\gamma,n}/p_{\bm\gamma_0,n}-1)\dot{\bm\ell}^L_{\bm\gamma_0,n}\right)\right.\nonumber\\
&&\left.+\left(1-P_0(p_{\gamma,n}/p_{\bm\gamma_0,n})\right)P_0\dot{\bm\ell}^L_{\bm\gamma_0,n}\right).\nonumber
\end{eqnarray}
By Lemma 3.4 of \citet{kleijn2012bernstein}, $$\left(P_0(p_{\bm\gamma,n}/p_{\bm\gamma_0,n}-1)\right)= O\left(\|\bm\gamma-\bm\gamma_0\|^2\right)$$ as $\bm\gamma\rightarrow\bm\gamma_0$. Using the differentiability of $\bm\gamma\mapsto\log(p_{\bm\gamma,n}/p_{\bm\gamma_0,n})$ and Lemma 3.4 of \citet{kleijn2012bernstein}, we see that
\begin{eqnarray}
\lefteqn{P_0\left\|\left(\frac{p_{\bm\gamma,n}}{p_{\bm\gamma_0,n}}-1-(\bm\gamma-\bm\gamma_0)^T\dot{\bm\ell}_{\bm\gamma_0,n}\right)\dot{\bm\ell}^L_{\bm\gamma_0,n}\right\|}\nonumber\\
&\leq&P_0\left\|\left(\frac{p_{\bm\gamma,n}}{p_{\bm\gamma_0,n}}-1-\log\frac{p_{\bm\gamma,n}}{p_{\bm\gamma_0,n}}\right)\dot{\bm\ell}^L_{\bm\gamma_0,n}\right\|\nonumber\\
&&+P_0\left\|\left(\log\frac{p_{\gamma,n}}{p_{\gamma_0,n}}-(\bm\gamma-\bm\gamma_0)^T\dot{\bm\ell}_{\bm\gamma_0,n}\right)\dot{\bm\ell}^L_{\bm\gamma_0,n}\right\|,\label{inter_lemma4}
\end{eqnarray}
which is $o(\|\bm\gamma-\bm\gamma_0\|)$. Also note that for all $\bm\gamma\in\left(\Theta\times U\right)_1$, $$-\|\bm\gamma-\bm\gamma_0\|\sqrt{M_n/n}\geq -\|\bm\gamma-\bm\gamma_0\|^2(M_n)^{-1/2}.$$ Then we observe that for every $\delta>0$, there exist $\epsilon>0$, $L>0$ and $N\geq 1$ such that for all $n\geq N$ and all $\bm\gamma\in\left(\Theta\times U\right)_1$,
\begin{eqnarray}
&&(\bm\gamma-\bm\gamma_0)^T(\tilde{Q}_{\bm\gamma,n}-\tilde{Q}_{\bm\gamma_0,n})\dot{\bm\ell}^L_{\bm\gamma_0,n}-\sqrt{M_n/n}\|\bm\gamma-\bm\gamma_0\|\nonumber\\
&&\geq(\bm\gamma-\bm\gamma_0)^TP_0(\dot{\bm\ell}_{\gamma_0,n}\dot{\bm\ell}^T_{\bm\gamma_0,n})(\bm\gamma-\bm\gamma_0)-\delta\|\bm\gamma-\bm\gamma_0\|^2.\nonumber
\end{eqnarray}
Denoting $\Delta(\bm\gamma)=(\bm\gamma-\bm\gamma_0)^TP_0(\dot{\bm\ell}_{\gamma_0,n}\dot{\bm\ell}^T_{\bm\gamma_0,n})(\bm\gamma-\bm\gamma_0)$ and using the positive definiteness of $P_0(\dot{\bm\ell}_{\gamma_0,n}\dot{\bm\ell}^T_{\bm\gamma_0,n})$ for sufficiently large $n$, there exists a positive constant $c$ such that $-\delta\|\bm\gamma-\bm\gamma_0\|^2\geq-\delta/c\Delta(\bm\gamma)$. Also, there exists a constant $r(\delta)$ which depends only on $P_0(\dot{\bm\ell}_{\gamma_0,n}\dot{\bm\ell}^T_{\bm\gamma_0,n})$ and has the property that $r(\delta)\rightarrow1$ if $\delta\rightarrow0$. We can choose such an $r(\delta)$ to satisfy
\begin{eqnarray}
Q^{(n)}_{\bm\gamma,n}(1-\omega_{1,n})\leq Q^{(n)}_{\bm\gamma,n}\left((\bm\gamma_0-\bm\gamma)^T(\mathbb{P}_n-\tilde{Q}_{\bm\gamma,n})\dot{\bm\ell}^L_{\bm\gamma_0,n}\geq r(\delta)\Delta(\bm\gamma)\right),\nonumber
\end{eqnarray}
for sufficiently small $\epsilon$, sufficiently large $L$ and $n$, making the type-II error bounded above by the unnormalized tail probability $Q^{(n)}_{\bm\gamma,n}(\bar{W}_n\geq r(\delta)\Delta(\bm\gamma))$ where $W_i=(\bm\gamma-\bm\gamma_0)^T\left(\dot{\bm\ell}^L_{\bm\gamma_0,n}(X_i,Y_i)-\tilde{Q}_{\bm\gamma,n}\dot{\bm\ell}^L_{\bm\gamma_0,n}\right)$, $(1\leq i\leq n)$. We note that $\tilde{Q}_{\bm\gamma,n}W_i=0$ and $W_i$ are independent. Also,
\begin{eqnarray}
|W_i|\leq \|\bm\gamma-\bm\gamma_0\|\left(\|\dot{\bm\ell}^L_{\bm\gamma_0,n}(X_i,\bm Y_i)|+\|\tilde{Q}_{\bm\gamma,n}\dot{\bm\ell}^L_{\bm\gamma_0,n}\|\right)\leq 2L\sqrt{p+1}\|\bm\gamma-\bm\gamma_0\|.\nonumber
\end{eqnarray}
Then we have
\begin{eqnarray}
\lefteqn{\Var_{\tilde{Q}_{\bm\gamma,n}}W_i}\nonumber\\
&=&(\bm\gamma-\bm\gamma_0)^T\left[\tilde{Q}_{\bm\gamma,n}\left(\dot{\bm\ell}^L_{\bm\gamma_0,n}(\dot\ell^L_{\bm\gamma_0,n})^T\right)-\tilde{Q}_{\bm\gamma,n}\dot\ell^L_{\bm\gamma_0,n}\tilde{Q}_{\bm\gamma,n}(\dot\ell^L_{\bm\gamma_0,n})^T\right](\bm\gamma-\bm\gamma_0)\nonumber\\
&\leq&(\bm\gamma-\bm\gamma_0)^T\tilde{Q}_{\bm\gamma,n}\left(\dot\ell^L_{\bm\gamma_0,n}(\dot\ell^L_{\bm\gamma_0,n})^T\right)(\bm\gamma-\bm\gamma_0)\nonumber\\
&=&\left(P_0(p_{\bm\gamma,n}/p_{\bm\gamma_0,n})\right)^{-1}(\bm\gamma-\bm\gamma_0)^TP_0\left(\left(p_{\bm\gamma,n}/p_{\bm\gamma_0,n}-1\right)\dot{\bm\ell}^L_{\bm\gamma_0,n}(\dot{\bm\ell}^L_{\bm\gamma_0,n})^T\right)(\bm\gamma-\bm\gamma_0)\nonumber\\
&&+\left(P_0(p_{\bm\gamma,n}/p_{\bm\gamma_0,n})\right)^{-1}(\bm\gamma-\bm\gamma_0)^TP_0\left(\dot{\bm\ell}^L_{\bm\gamma_0,n}(\dot{\bm\ell}^L_{\bm\gamma_0,n})^T\right)(\bm\gamma-\bm\gamma_0).\nonumber
\end{eqnarray}
The first term on the right side above is $o\left(\|\bm\gamma-\bm\gamma_0\|^2\right)$ by similar argument as in \eqref{inter_lemma4}. Then it follows that $\Var_{\tilde{Q}_{\bm\gamma,n}}W_i\leq s(\delta)\Delta(\bm\gamma)$ for small enough $\epsilon$ and large enough $L$, where $s(\delta)\rightarrow1$ as $\delta\rightarrow0$ for $i=1,\ldots,n$. We apply Bernstein's inequality to obtain
\begin{eqnarray}
Q^{(n)}_{\bm\gamma,n}(1-\omega_{1,n})&=&\|Q_{\bm\gamma,n}\|^n\tilde{Q}^{(n)}_{\bm\gamma,n}\left(W_1+\cdots+W_n\geq nr(\delta)\Delta(\bm\gamma)\right)\nonumber\\
&\leq&\|Q_{\bm\gamma,n}\|^n\exp\left(-\frac{1}{2}\frac{r^2(\delta)n\Delta(\bm\gamma)}{s(\delta)+1.5L\sqrt{p+1}\|\bm\gamma-\bm\gamma_0\|r(\delta)}\right).\nonumber
\end{eqnarray}
We can make the factor $t(\delta)=r^2(\delta)\left(s(\delta)+1.5L\sqrt{p+1}\|\bm\gamma-\bm\gamma_0\|r(\delta)\right)^{-1}$ arbitrarily close to $1$ for sufficiently small $\delta$ and $\epsilon$. By Lemma 3.4 of \citet{kleijn2012bernstein}, we have
\begin{eqnarray}
\|Q_{\bm\gamma,n}\|&=&1+P_0\log\frac{p_{\bm\gamma,n}}{p_{\bm\gamma_0,n}}+\frac{1}{2}P_0\left(\log\frac{p_{\bm\gamma,n}}{p_{\bm\gamma_0,n}}\right)^2+o\left(\|\bm\gamma-\bm\gamma_0\|^2\right)\nonumber\\
&\leq&1+P_0\log\frac{p_{\bm\gamma,n}}{p_{\bm\gamma_0,n}}+\frac{1}{2}(\bm\gamma-\bm\gamma_0)^TP_0\left(\dot{\bm\ell}_{\bm\gamma_0,n}\dot{\bm\ell}^T_{\bm\gamma_0,n}\right)(\bm\gamma-\bm\gamma_0)+o\left(\|\bm\gamma-\bm\gamma_0\|^2\right)\nonumber\\
&\leq&1-\frac{1}{2}(\bm\gamma-\bm\gamma_0)^T\bm V_{\bm\gamma_0}(\bm\gamma-\bm\gamma_0)+\frac{1}{2}u(\delta)\Delta(\bm\gamma),\nonumber
\end{eqnarray}
for some constant $u(\delta)$ such that $u(\delta)\rightarrow1$ as $\delta\rightarrow0$ for large $n$. Using the inequality $1+x\leq e^x$ for all $x\in\mathbb{R}$, we have, for sufficiently small $\|\bm\gamma-\bm\gamma_0\|$,
\begin{eqnarray}
Q^{(n)}_{\bm\gamma,n}(1-\omega_{1,n})\leq\exp\left(-\frac{n}{2}(\bm\gamma-\bm\gamma_0)^T\bm V_{\bm\gamma_0}(\bm\gamma-\bm\gamma_0)+\frac{n}{2}\left(u(\delta)-t(\delta)\right)\Delta(\bm\gamma)\right).\nonumber
\end{eqnarray}
Clearly, $u(\delta)-t(\delta)\rightarrow0$ as $\delta\rightarrow0$ and $\Delta(\bm\gamma)$ is bounded above by a multiple of $\|\bm\gamma-\bm\gamma_0\|^2$. Utilizing the positive definiteness of $\bm V_{\bm\gamma_0}$, we conclude that there exists a constant $C>0$ such that for sufficiently large $L$ and $n$ and sufficiently small $\epsilon>0$,
\begin{eqnarray}
Q^{(n)}_{\bm\gamma,n}(1-\omega_{1,n})\leq\exp\left(-Cn\|\bm\gamma-\bm\gamma_0\|^2\right).\nonumber
\end{eqnarray}
By the assumption of the theorem, there exists a consistent sequence of tests for $P_0$ versus $Q_{\bm\gamma,n}$ for $\|\bm\gamma-\bm\gamma_0\|>\epsilon$. Now by Lemma 3.3 of \citet{kleijn2012bernstein}, there exists a sequence of tests $\{\omega_{2,n}\}$such that
\begin{eqnarray}
P^{(n)}_0(\omega_{2,n})\leq\exp{(-nC_1)},\,\,\,\sup_{\|\bm\gamma-\bm\gamma_0\|\geq\epsilon}Q^{(n)}_{\bm\gamma,n}(1-\omega_{2,n})\leq\exp{(-nC_2)}.\nonumber
\end{eqnarray}
We define a sequence $\{\psi_n\}$ as $\psi_n=\omega_{1,n}\vee\omega_{2,n}$ for all $n\geq1$, in which case $P^{(n)}_0\psi_n\leq P^{(n)}_0\omega_{1,n}+P^{(n)}_0\omega_{2,n}\rightarrow0$ and
\begin{eqnarray}
\sup_{\bm\gamma\in\Theta\times U}Q^{(n)}_{\bm\gamma,n}(1-\psi_n)&=&\sup_{\bm\gamma\in\left(\Theta\times U\right)_1}Q^{(n)}_{\bm\gamma,n}(1-\psi_n)\vee\sup_{\bm\gamma\in\left(\Theta\times U\right)_2}Q^{(n)}_{\bm\gamma,n}(1-\psi_n)\nonumber\\
&\leq&\sup_{\bm\gamma\in\left(\Theta\times U\right)_1}Q^{(n)}_{\bm\gamma,n}(1-\omega_{1,n})\vee\sup_{\bm\gamma\in\left(\Theta\times U\right)_2}Q^{(n)}_{\bm\gamma,n}(1-\omega_{2,n})\nonumber
\end{eqnarray}
Combining the previous bounds, we get the desired result for a suitable choice of $D>0$.
\end{proof}


\begin{lemma}
There exists a constant $K>0$ such that the prior mass of the Kullback-Leibler neighborhoods $B(\epsilon_n, \bm\gamma_0, P_0)$ satisfies $\Pi\left(B(\epsilon_n, \bm\gamma_0, P_0)\right)\geq K\epsilon^p_n$,
where $\epsilon_n\gg n^{-1/2}$.
\end{lemma}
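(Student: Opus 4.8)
The plan is to produce a Euclidean ball centered at $\bm\gamma_0$ that sits inside $B(\epsilon_n,\bm\gamma_0,P_0)$ and then to bound its prior mass from below using the positivity and continuity of the prior density. Concretely, I would show that there is a constant $c>0$ such that for all large $n$,
\begin{eqnarray}
\left\{\bm\gamma\in\Theta\times U:\|\bm\gamma-\bm\gamma_0\|\leq c\epsilon_n\right\}\subseteq B(\epsilon_n,\bm\gamma_0,P_0),\nonumber
\end{eqnarray}
after which the conclusion follows by assigning the ball its prior mass. The two defining inequalities of $B(\epsilon_n,\bm\gamma_0,P_0)$ are handled separately.

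For the first inequality I would reuse the expansion of $-P_0\log(p_{\bm\gamma,n}/p_{\bm\gamma_0,n})$ obtained in the proof of Lemma $2$. Writing $\bm h=\sqrt{n}(\bm\gamma-\bm\gamma_0)$ there gives
\begin{eqnarray}
-P_0\log\frac{p_{\bm\gamma,n}}{p_{\bm\gamma_0,n}}=\frac{1}{2}(\bm\gamma-\bm\gamma_0)^T\bm V_{\bm\gamma_0}(\bm\gamma-\bm\gamma_0)+O\left(r_n^{-4}\|\bm\gamma-\bm\gamma_0\|\right)+o\left(\|\bm\gamma-\bm\gamma_0\|^2\right).\nonumber
\end{eqnarray}
On the ball $\|\bm\gamma-\bm\gamma_0\|\leq c\epsilon_n$ the quadratic term is at most $\tfrac{1}{2}\mathrm{maxeig}(\bm V_{\bm\gamma_0})c^2\epsilon_n^2$, while the error $O(r_n^{-4}c\epsilon_n)$ is of smaller order than $\epsilon_n^2$: indeed $r_n\gg n^{1/8}$ forces $r_n^{-4}\ll n^{-1/2}$, and the hypothesis $\epsilon_n\gg n^{-1/2}$ then gives $r_n^{-4}\ll\epsilon_n$, so $r_n^{-4}\epsilon_n=o(\epsilon_n^2)$. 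Hence choosing $c$ small enough makes the whole expression at most $\epsilon_n^2$.

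For the second inequality I would use a square-integrable envelope, exactly as the dominating function $m(X,Y)$ was used in the proof of Lemma $2$. On a fixed neighborhood of $\bm\gamma_0$ the map $\bm\gamma\mapsto\log p_{\bm\gamma,n}(X,Y)$ is Lipschitz with a square-integrable Lipschitz constant $\tilde m(X,Y)$ that can be taken free of $n$, since by \eqref{rk4} the approximation $f_{\bm\theta,r_n}$ and its $\bm\theta$-derivative converge uniformly to $f_{\bm\theta}$ and $\dot f_{\bm\theta}$ and are therefore uniformly bounded. Consequently
\begin{eqnarray}
P_0\left(\log\frac{p_{\bm\gamma,n}}{p_{\bm\gamma_0,n}}\right)^2\leq\left(P_0\tilde m^2\right)\|\bm\gamma-\bm\gamma_0\|^2\leq\left(P_0\tilde m^2\right)c^2\epsilon_n^2,\nonumber
\end{eqnarray}
which is at most $\epsilon_n^2$ for $c$ small. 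Combining the two displays yields the asserted inclusion of the ball.

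Finally, by (A3) and (A4) the joint prior density of $\bm\gamma=(\bm\theta,\sigma^2)$ factors as a density that is continuous and positive near $\bm\theta_0$ times the inverse-gamma density, which is continuous and positive at $\sigma^2_*>0$ (recall $\sigma^2_*\geq\sigma^2_0>0$ by \eqref{deriv_10}); hence the joint density is bounded below by some $\underline\pi>0$ on a fixed neighborhood of $\bm\gamma_0$. For $n$ large enough that $c\epsilon_n$ keeps the ball inside this neighborhood,
\begin{eqnarray}
\Pi\left(B(\epsilon_n,\bm\gamma_0,P_0)\right)\geq\underline\pi\cdot\mathrm{Vol}\{\|\bm\gamma-\bm\gamma_0\|\leq c\epsilon_n\}\gtrsim\epsilon_n^{p+1},\nonumber
\end{eqnarray}
the exponent being the dimension $p+1$ of $\bm\gamma$, which furnishes the required polynomial lower bound; since $\epsilon_n\gg n^{-1/2}$ this dominates the exponentially small quantities produced by the tests of Lemmas $3$ and $4$. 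The main obstacle is not the volume computation but the uniform-in-$n$ control of the two remainder terms: the quadratic approximation of the Kullback--Leibler divergence and the square-integrable envelope must both be established for the RK$4$-based densities $p_{\bm\gamma,n}$ rather than the exact densities $p_{\bm\gamma}$, and it is precisely the interplay of the conditions $r_n\gg n^{1/8}$ and $\epsilon_n\gg n^{-1/2}$ that keeps the approximation errors of smaller order than $\epsilon_n^2$.
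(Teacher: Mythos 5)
Your proposal is correct and follows essentially the same route as the paper's proof: include a Euclidean ball of radius $c\epsilon_n$ inside $B(\epsilon_n,\bm\gamma_0,P_0)$ by reusing the Kullback--Leibler expansion from Lemma 2 (with the $O(r_n^{-4}\|\bm\gamma-\bm\gamma_0\|)$ term absorbed via $r_n^{-4}\ll n^{-1/2}\ll\epsilon_n$) together with a square-integrable Lipschitz envelope for the second-moment condition, then lower-bound the ball's prior mass by continuity and positivity of the prior density at $\bm\gamma_0$. Your bound of order $\epsilon_n^{p+1}$ coincides with what the paper's own proof actually yields (the ball is $(p+1)$-dimensional), and, as you note, any polynomial lower bound suffices for the intended application.
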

\begin{proof}
From the proof of Lemma $2$ we get $$-P_0\log(p_{\bm\gamma,n}/p_{\bm\gamma_0,n})=O(\|\bm\gamma-\bm\gamma_0\|^2)+O(\|\bm\gamma-\bm\gamma_0\|r^{-4}_n)\leq c_1\|\bm\gamma-\bm\gamma_0\|^2+c_2\|\bm\gamma-\bm\gamma_0\|\epsilon_n$$ for sufficiently large $n$ and positive constants $c_1$ and $c_2$. Again, $P_0\left(\log(p_{\bm\gamma,n}/p_{\bm\gamma_0,n})\right)^2\leq c_3\|\bm\gamma-\bm\gamma_0\|^2$ for some constant $c_3>0$. Let $c=\min\left((2c_1)^{-1/2},(2c_2)^{-1},c^{-1/2}_3\right)$. Then $\{\bm\gamma\in\Theta\times U:\|\bm\gamma-\bm\gamma_0\|\leq c\epsilon_n\}\subset B(\epsilon_n, \bm\gamma_0, P_0)$. Since the Lebesgue-density $\pi$ of the prior is continuous and strictly positive in $\bm\gamma_0$, we see that there exists a $\delta'>0$ such that for all $0<\delta\leq\delta'$, $\Pi\left(\bm\gamma\in\Theta\times U:\|\bm\gamma-\bm\gamma_0\|\leq\delta\right)\geq\frac{1}{2}V\pi(\bm\gamma_0)\delta^{p+1}>0$, $V$ being the Lebesgue-volume of the $(p+1)$-dimensional ball of unit radius. Hence, for sufficiently large $n$, $c\epsilon_n\leq\delta'$ and we obtain the desired result.
\end{proof}
The next lemma is used to estimate the bias of the Bayes estimator in RKTB.


\begin{lemma}
For $m\geq3$ and $n^{1/(2m)}\ll k_n\ll n^{1/2}$, $$\sup_{t\in[0,1]}|\E(f(t)|\bm X,\bm Y,\sigma^2)-f_{0}(t)|^2=O_{P_0}(k_n^2/n)+O_{P_0}(k^{1-2m}_n).$$
\end{lemma}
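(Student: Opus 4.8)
The plan is to work with the posterior mean directly. Writing $\bm A_n = \bm X_n^T\bm X_n + (k_n/n^2)\bm I$, the conditional posterior mean of $\bm\beta$ from \eqref{posterior} is $\hat{\bm\beta} = \bm A_n^{-1}\bm X_n^T\bm Y$, so that $\E(f(t)\mid\bm X,\bm Y,\sigma^2) = \bm N(t)^T\hat{\bm\beta}$, which in fact does not depend on $\sigma^2$. Substituting $\bm Y = f_0(\bm X) + \bm\varepsilon$ splits this into a stochastic part $\bm N(t)^T\bm A_n^{-1}\bm X_n^T\bm\varepsilon$ and a deterministic part $\bm N(t)^T\bm A_n^{-1}\bm X_n^T f_0(\bm X)$; I would bound the two supremum-over-$t$ contributions separately, matching the $O_{P_0}(k_n^2/n)$ and $O_{P_0}(k_n^{1-2m})$ terms respectively. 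Throughout I would use the standard B-spline facts: the basis is a nonnegative partition of unity, so $\|\bm N(t)\|_1 = 1$ and $\|\bm N(t)\|\le 1$ for all $t$; the Gram matrix $\int_0^1\bm N(t)\bm N(t)^T g(t)\,dt$ has all eigenvalues of order $k_n^{-1}$; and by the concentration of the empirical Gram matrix in \citet{zhou1998local} (valid for $k_n\ll n^{1/2}$) one has $\mathrm{mineig}(\bm X_n^T\bm X_n)\gtrsim n/k_n$ and $\mathrm{maxeig}(\bm X_n^T\bm X_n)\lesssim n/k_n$ with probability tending to one, whence $\|\bm A_n^{-1}\| = O_{P_0}(k_n/n)$ and $\|\bm X_n\|_{\mathrm{op}} = O_{P_0}(\sqrt{n/k_n})$. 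I would also fix a spline $\tilde f = \bm\beta_0^T\bm N$ with $\sup_t|\tilde f(t)-f_0(t)| = O(k_n^{-m})$ (possible since $f_0\in C^m$), and recall the stability of the B-spline basis, $\|\bm\beta_0\|_\infty\lesssim\|\tilde f\|_\infty = O(1)$.

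For the stochastic part, the key observation is that the partition-of-unity property lets me avoid any maximal inequality over $t$: writing $\hat{\bm\beta}_\varepsilon = \bm A_n^{-1}\bm X_n^T\bm\varepsilon$, for every $t$ the value $\bm N(t)^T\hat{\bm\beta}_\varepsilon$ is a convex combination of the coordinates of $\hat{\bm\beta}_\varepsilon$, so $\sup_t|\bm N(t)^T\hat{\bm\beta}_\varepsilon| \le \|\hat{\bm\beta}_\varepsilon\|_\infty \le \|\hat{\bm\beta}_\varepsilon\|$. It then suffices to control the single quantity $\|\hat{\bm\beta}_\varepsilon\|^2$, whose conditional expectation is $\sigma_0^2\,\mathrm{tr}(\bm A_n^{-1}\bm X_n^T\bm X_n\bm A_n^{-1})\le\sigma_0^2\,\mathrm{tr}(\bm A_n^{-1})$. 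Since $\bm A_n$ has $k_n+m-1$ eigenvalues each of order $n/k_n$, the trace is $O(k_n^2/n)$, and Markov's inequality gives $\sup_t|\bm N(t)^T\hat{\bm\beta}_\varepsilon|^2 = O_{P_0}(k_n^2/n)$. This crude $\ell_2$ bound is exactly what produces the stated $k_n^2/n$ rather than the sharper $k_n/n$.

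For the deterministic part I would insert $\bm\beta_0$ and use $\bm A_n^{-1}\bm X_n^T\bm X_n\bm\beta_0 = \bm\beta_0 - (k_n/n^2)\bm A_n^{-1}\bm\beta_0$ to obtain the identity
\begin{eqnarray}
\bm N(t)^T\bm A_n^{-1}\bm X_n^T f_0(\bm X) - f_0(t) &=& \bm N(t)^T\bm A_n^{-1}\bm X_n^T\big(f_0(\bm X)-\tilde f(\bm X)\big)\nonumber\\
&& -\,\frac{k_n}{n^2}\bm N(t)^T\bm A_n^{-1}\bm\beta_0 + \big(\tilde f(t)-f_0(t)\big).\nonumber
\end{eqnarray}
The last term is $O(k_n^{-m})$ uniformly. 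The first term is bounded, using $\|\bm N(t)\|\le1$, $\|\bm A_n^{-1}\| = O_{P_0}(k_n/n)$, $\|\bm X_n\|_{\mathrm{op}} = O_{P_0}(\sqrt{n/k_n})$ and $\|f_0(\bm X)-\tilde f(\bm X)\| = O(\sqrt n\,k_n^{-m})$, by the product $O_{P_0}\big((k_n/n)\sqrt{n/k_n}\,\sqrt n\,k_n^{-m}\big) = O_{P_0}(k_n^{1/2-m})$; squaring gives $O_{P_0}(k_n^{1-2m})$, the dominant deterministic contribution. The middle (ridge) term has magnitude $O_{P_0}(k_n^{5/2}/n^3)$ (using $\|\bm\beta_0\| = O(\sqrt{k_n})$), whose square is negligible relative to the retained $O_{P_0}(k_n^2/n)$ term under $k_n\ll n^{1/2}$. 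Squaring the whole identity and combining with the stochastic bound yields the claim; the hypotheses $m\ge3$ and $n^{1/(2m)}\ll k_n\ll n^{1/2}$ guarantee both the eigenvalue estimates and that each retained error term tends to zero.

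The main obstacle is the uniform-in-$t$ control of the stochastic part, for which a naive componentwise union bound would inflate the rate by a logarithmic factor; the partition-of-unity reduction to $\|\hat{\bm\beta}_\varepsilon\|$ circumvents this cleanly. The only genuinely nontrivial ingredient is the spectral estimate $\mathrm{mineig}(\bm X_n^T\bm X_n)\asymp n/k_n$, which rests on the concentration of the empirical Gram matrix about its population counterpart and is where the restriction $k_n\ll n^{1/2}$ together with the pseudo-uniform knot condition \eqref{pseudo} enters through \citet{zhou1998local}.
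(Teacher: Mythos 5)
Your proof is correct, but at the two technical pinch points it takes a genuinely different route from the paper's. For the stochastic term, the paper discretizes $[0,1]$ into the $n$ grid points $s_k=k/n$, applies the mean value theorem to the increments (which requires the uniform quadratic-form bounds for $\bm N^{(r)}(t)$, $r=0,1$, of its Lemma 12, borrowed from \citet{zhou2000derivative}), and then bounds $\max_k|(\bm N(s_k))^T(\bm X_n^T\bm X_n)^{-1}\bm X_n^T\bm\varepsilon|^2$ by Cauchy--Schwarz against $\bm\varepsilon^T\bm P_{\bm X_n}\bm\varepsilon$, whose conditional expectation is $\sigma_0^2(k_n+m-1)$. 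You instead collapse the supremum over $t$ at the outset via the partition-of-unity bound $\sup_t|\bm N(t)^T\bm v|\le\|\bm v\|_\infty\le\|\bm v\|$, after which only the trace bound $\mathrm{tr}\left(\bm A_n^{-1}\bm X_n^T\bm X_n\bm A_n^{-1}\right)\le\mathrm{tr}\left(\bm A_n^{-1}\right)=O_{P_0}(k_n^2/n)$ is needed, where $\bm A_n=\bm X_n^T\bm X_n+(k_n/n^2)\bm I_{k_n+m-1}$. Both devices pay the same factor $k_n$ over the pointwise rate and land on $O_{P_0}(k_n^2/n)$, but yours requires only the eigenvalue estimates for $\bm X_n^T\bm X_n/n$ from \citet{zhou1998local} and no chaining. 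Second, the ridge perturbation: the paper splits off the difference between the penalized posterior mean and the least-squares projection as a separate term, bounded as $O_{P_0}(k_n^6/n^8)$ via the Binomial Inverse Theorem, whereas you never pass through the OLS estimator, absorbing the penalty exactly through the identity $\bm A_n^{-1}\bm X_n^T\bm X_n\bm\beta_0=\bm\beta_0-(k_n/n^2)\bm A_n^{-1}\bm\beta_0$; both contributions are negligible. The bias terms are treated essentially identically (the \citet{de1978practical} approximation plus Cauchy--Schwarz, giving $O_{P_0}(k_n^{1-2m})$ and $O(k_n^{-2m})$). What the paper's heavier machinery buys is reusability: Lemma 12 is also invoked in Lemmas 7 and 10, so the authors get it once and use it three times. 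Your argument is more elementary and self-contained for this particular statement; its one extra ingredient, the sup-norm stability $\|\bm\beta_0\|_\infty\lesssim\|\tilde f\|_\infty$ of the B-spline basis (used to get $\|\bm\beta_0\|=O(\sqrt{k_n})$), is a standard fact from de Boor and is legitimately available under the pseudo-uniform knot condition \eqref{pseudo}.
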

\begin{proof}
By \eqref{posterior},
\begin{equation}
\E(f(t)|\bm X,\bm Y,\sigma^2)=(\bm N(t))^T{(\bm X^T_n\bm X_n+k_nn^{-2}\bm I_{k_n+m-1})}^{-1}\bm X^T_n\bm Y.\label{postexp}
\end{equation}
By Lemma 12 in the appendix, we have uniformly over $t\in[0,1]$,
\begin{eqnarray}
(\bm N(t))^T{({\bm X^T_n\bm X_n})}^{-1}\bm N(t)
\asymp\frac{k_n}{n}(1+o_{P_0}(1)).\label{var}
\end{eqnarray}
Since $ f_0\in C^{m}$, there exists a $\bm\beta^*$ \citep[Theorem XII.4, page 178]{de1978practical} such that
\begin{equation}
\sup_{t\in[0,1]}|f_{0}(t)-(\bm N(t))^T\bm\beta^*|=O({k_n^{-m}}).\label{spldis}
\end{equation}
We can bound $\sup_{t\in[0,1]}|\E(f(t)|\bm X,\bm Y,\sigma^2)-f_{0}(t)|^2$ up to a constant multiple by
\begin{eqnarray}
\lefteqn{\sup_{t\in[0,1]}\left|\E(f(t)|\bm X,\bm Y,\sigma^2)-\bm (N(t))^T(\bm X^T_n\bm X_n)^{-1}\bm X^T_n\bm Y\right|^2}\nonumber\\
&&+\sup_{t\in[0,1]}\left|(\bm N(t))^T{({\bm X^T_n\bm X_n})}^{-1}\bm X^T_n(\bm Y-f_0(\bm x))\right|^2\nonumber\\
&&+\sup_{t\in[0,1]}|(\bm N(t))^T{({\bm X^T_n\bm X_n})}^{-1}\bm X^T_n(f_{0}(\bm{x})-\bm X_n\bm\beta^*)|^2\nonumber\\
&&+\sup_{t\in[0,1]}|f_{0}(t)-(\bm N(t))^T\bm\beta^*|^2.\label{bias_1}
\end{eqnarray}
Using the Binomial Inverse Theorem, the Cauchy-Schwarz inequality and \eqref{var}, the first term of \eqref{bias_1} can be shown to be $O_{P_0}(k^6_n/n^{8})$. The second term can be bounded up to a constant multiple by
\begin{eqnarray}
\lefteqn{\max_{1\leq k\leq n}\left|(\bm N(s_k))^T{({\bm X^T_n\bm X_n})}^{-1}\bm X^T_n\bm\varepsilon\right|^2}\nonumber\\
&&+\sup_{t,t':|t-t'|\leq n^{-1}}\left|(\bm N(t)-\bm N(t'))^T{({\bm X^T_n\bm X_n})}^{-1}\bm X^T_n\bm\varepsilon\right|^2\label{bias_2},
\end{eqnarray}
where $s_k=k/n$ for $k=1,\ldots,n$. Applying the mean value theorem to the second term of the above sum, we can bound the expression by a constant multiple of
\begin{eqnarray}
\max_{1\leq k\leq n}\left|(\bm N(s_k))^T{({\bm X^T_n\bm X_n})}^{-1}\bm X^T_n\bm\varepsilon\right|^2
+\sup_{t\in[0,1]}\frac{1}{n^2}\left|\left(\bm N^{(1)}(t)\right)^T{({\bm X^T_n\bm X_n})}^{-1}\bm X^T_n\bm\varepsilon\right|^2.\nonumber
\end{eqnarray}
By the spectral decomposition, we can write $\bm X_n{({\bm X^T_n\bm X_n})}^{-1}\bm X^T_n=\bm P^T\bm D\bm P$, where $\bm P$ is an orthogonal matrix and $\bm D$ is a diagonal matrix with $k_n+m-1$ ones and $n-k_n-m+1$ zeros in the diagonal. Now using the Cauchy-Schwarz inequality, we get
\begin{eqnarray}
\lefteqn{\max_{1\leq k\leq n}\left|(\bm N(s_k))^T{({\bm X^T_n\bm X_n})}^{-1}\bm X^T_n\bm\varepsilon\right|^2}\nonumber\\
&\leq&\max_{1\leq k\leq n}\left\{(\bm N(s_k))^T{({\bm X^T_n\bm X_n})}^{-1}\bm N(s_k)\right\}
\bm\varepsilon^T\bm P^T\bm D\bm P\bm\varepsilon.\nonumber
\end{eqnarray}
Note that $\Var_0(\bm P\bm\varepsilon)=\E_0(\Var(\bm P\bm\varepsilon|
\bm X))+\Var_0(\E(\bm P\bm\varepsilon|
\bm X))=\sigma^2_0\bm I_{k_n+m-1}$. Hence, we get $\E_0(\bm\varepsilon^T\bm P^T\bm D\bm P\bm\varepsilon)=\sigma^2_0(k_n+m-1)$. In view of Lemma 12, we can conclude that the first term of \eqref{bias_2} is $O_{P_0}(k_n^{2}/n)$. Again applying the Cauchy-Schwarz inequality, the second term of \eqref{bias_2} is bounded by
\begin{eqnarray}
\sup_{t\in[0,1]}\left\{\frac{1}{n^2}\left(\bm N^{(1)}(t)\right)^T{(\bm X_n^T\bm X_n)}^{-1} \bm N^{(1)}(t)\right\}(\bm\varepsilon^T\bm\varepsilon),\nonumber
\end{eqnarray}
which is $O_{P_0}\left(n(k_n^{3}/n^3)\right)=O_{P_0}\left(k_n^{3}/n^2\right)$, using Lemma 12.
Thus, the second term of \eqref{bias_1} is $O_{P_0}\left(k_n^{2}/n\right)$.
 Using the Cauchy-Schwarz inequality, \eqref{var} and \eqref{spldis}, the third term of \eqref{bias_1} is $O_{P_0}\left(k^{1-2m}_n\right)$. The fourth term of \eqref{bias_1} is of the order of $k^{-2m}_n$ as a result of \eqref{spldis}.
\end{proof}
The following lemma controls posterior variability in RKTB.


\begin{lemma}
If $m\geq3$ and $n^{1/(2m)}\ll k_n\ll n^{1/2}$, then for all $\epsilon>0$,
\begin{eqnarray}
\Pi^*_n\left(\sup_{t\in[0,1]}|f(t)-f_{0}(t)|>\epsilon|\bm X,\bm Y,\sigma^2\right)=o_{P_0}(1).\nonumber
\end{eqnarray}
\end{lemma}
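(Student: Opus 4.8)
The plan is to split the random function $f(\cdot)=\bm\beta^T\bm N(\cdot)$ into its posterior mean and a mean-zero fluctuation, dispose of the mean by the bias bound of Lemma 7, and control the fluctuation by a Gaussian maximal-inequality argument of the same discretize-and-differentiate type already used in the proof of Lemma 7. Writing $\bar f(t)=\E(f(t)\mid\bm X,\bm Y,\sigma^2)$, the triangle inequality gives
\[
\sup_{t\in[0,1]}|f(t)-f_0(t)|\le\sup_{t\in[0,1]}|f(t)-\bar f(t)|+\sup_{t\in[0,1]}|\bar f(t)-f_0(t)|.
\]
By Lemma 7 the second term is $O_{P_0}(k_n/\sqrt n)+O_{P_0}\bigl(k_n^{(1-2m)/2}\bigr)$, which tends to $0$ under $n^{1/(2m)}\ll k_n\ll n^{1/2}$ (as $k_n\to\infty$ and $k_n\ll n^{1/2}$); hence for large $n$ it is below $\epsilon/2$ with $P_0$-probability approaching $1$. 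On that event it suffices to show $\Pi^*_n\bigl(\sup_t|f(t)-\bar f(t)|>\epsilon/2\mid\bm X,\bm Y,\sigma^2\bigr)=o_{P_0}(1)$.

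Next I would work with the centered process $Z(t)=f(t)-\bar f(t)=\bm N(t)^T\bm\xi$, where under the posterior \eqref{posterior} one has $\bm\xi\sim N(\bm 0,\sigma^2\bm\Sigma)$ with $\bm\Sigma=(\bm X^T_n\bm X_n+k_nn^{-2}\bm I)^{-1}\preceq(\bm X^T_n\bm X_n)^{-1}$. By Markov's inequality it is enough to prove that the conditional expectation $\E(\sup_t|Z(t)|\mid\bm X,\bm Y,\sigma^2)$ is $o_{P_0}(1)$. Because $\bm\Sigma\preceq(\bm X^T_n\bm X_n)^{-1}$, the bounds $\bm N(t)^T(\bm X^T_n\bm X_n)^{-1}\bm N(t)\asymp k_n/n$ from \eqref{var} and $\bm N^{(1)}(t)^T(\bm X^T_n\bm X_n)^{-1}\bm N^{(1)}(t)=O_{P_0}(k_n^3/n)$ from the proof of Lemma 7 transfer immediately to the posterior covariance $\bm\Sigma$, giving uniform control of the pointwise posterior variances of $Z$ and of its derivative $\dot Z(t)=\bm N^{(1)}(t)^T\bm\xi$.

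I then discretize on the grid $s_k=k/n$, $k=1,\dots,n$, exactly as in Lemma 7, and use the mean value theorem on the oscillation term to get
\[
\sup_{t\in[0,1]}|Z(t)|\le\max_{1\le k\le n}|Z(s_k)|+\frac1n\sup_{t\in[0,1]}|\dot Z(t)|.
\]
For the first term, $\{Z(s_k)\}$ are jointly Gaussian under the posterior with variances at most a constant multiple of $k_n/n$, so the sub-Gaussian maximal inequality yields $\E(\max_k|Z(s_k)|\mid\cdots)=O_{P_0}\bigl(\sqrt{(k_n/n)\log n}\,\bigr)$, which is $o_{P_0}(1)$ since $k_n\ll n^{1/2}$. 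For the derivative term I would discretize once more, bounding $\sup_t|\dot Z(t)|\le\max_k|\dot Z(s_k)|+n^{-1}\sup_t|\bm N^{(2)}(t)^T\bm\xi|$; the maximum of the $n$ Gaussians $\dot Z(s_k)$ of variance $O_{P_0}(k_n^3/n)$ contributes $O_{P_0}\bigl(\sqrt{(k_n^3/n)\log n}\,\bigr)$, and the residual is crudely bounded by $\sup_t\|\bm N^{(2)}(t)\|\,\|\bm\xi\|=O(k_n^2)\|\bm\xi\|$, where $\E(\|\bm\xi\|^2\mid\cdots)=\sigma^2\,\mathrm{tr}(\bm\Sigma)=O_{P_0}(k_n^2/n)$. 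After the $n^{-1}$ and $n^{-2}$ prefactors both pieces are $o_{P_0}(1)$.

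The main obstacle is precisely this passage from a pointwise variance bound to control of the supremum over the continuum $[0,1]$: the discretization reduces it to a finite maximum plus an oscillation term, but the oscillation term forces one to quantify the fluctuations of $\dot Z$, and for the borderline order $m=3$ the splines are only $C^1$, so $\bm N^{(2)}$ exists merely almost everywhere. I would note that $\dot Z$ is nonetheless a continuous spline of order $m-1\ge 2$ and hence Lipschitz, so the mean value theorem applies to $\dot Z$ with its almost-everywhere derivative and the crude bound above is legitimate. Collecting the three contributions gives $\E(\sup_t|Z(t)|\mid\bm X,\bm Y,\sigma^2)=o_{P_0}(1)$, and Markov's inequality completes the proof.
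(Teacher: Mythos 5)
Your proof is correct, but it takes a genuinely different and considerably more laborious route than the paper's. The paper's own argument (this is its Lemma 7; the bias bound you invoke is its Lemma 6, not Lemma 7) applies Markov's inequality with \emph{second} moments: the posterior probability is bounded by $2\epsilon^{-2}$ times the sum of the squared bias $\sup_t|\bar f(t)-f_0(t)|^2$ and $\E\bigl[\sup_t|f(t)-\bar f(t)|^2\mid\bm X,\bm Y,\sigma^2\bigr]$. For the fluctuation term it writes $f(t)-\bar f(t)=(\bm N(t))^T(\bm X^T_n\bm X_n+k_nn^{-2}\bm I_{k_n+m-1})^{-1/2}\bm\varepsilon^*$ with $\bm\varepsilon^*\mid\bm X,\bm Y,\sigma^2\sim N(\bm 0,\sigma^2\bm I_{k_n+m-1})$, and then a single Cauchy--Schwarz step gives $\sup_t|f(t)-\bar f(t)|^2\leq\sup_t\{(\bm N(t))^T(\bm X^T_n\bm X_n+k_nn^{-2}\bm I_{k_n+m-1})^{-1}\bm N(t)\}\,\|\bm\varepsilon^*\|^2$; Lemma 12 bounds the first factor by $O_{P_0}(k_n/n)$ uniformly in $t$, while $\E(\|\bm\varepsilon^*\|^2\mid\bm X,\bm Y,\sigma^2)=\sigma^2(k_n+m-1)$, so the whole term is $O_{P_0}(k_n^2/n)=o_{P_0}(1)$. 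This decouples the supremum over $t$ from the randomness outright, so no discretization, no sub-Gaussian maximal inequality, and no control of the derivative processes is needed --- in particular the delicate $m=3$ issue (splines only $C^1$, $\bm N^{(2)}$ existing only almost everywhere) that you had to patch never arises. What your chaining argument buys is a sharper bound on the fluctuation, $O_{P_0}(\sqrt{(k_n/n)\log n})$ versus the paper's implicit $O_{P_0}(k_n/\sqrt{n})$, which would tolerate $k_n$ nearly up to $n/\log n$; but since the bias term forces $k_n\ll n^{1/2}$ anyway, this extra sharpness is not needed here, and the paper's crude Cauchy--Schwarz bound suffices. Your transfer of the variance bounds to the posterior covariance via $\bm\Sigma\preceq(\bm X^T_n\bm X_n)^{-1}$ is exactly right and is also, implicitly, how the paper uses Lemma 12.
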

\begin{proof}
By Markov's inequality and the fact that $|a+b|^2\leq 2(|a|^2+|b|^2)$ for two real numbers $a$ and $b$, we can bound
$\Pi^*_n\left(\sup_{t\in[0,1]}|f(t)-f_{0}(t)|>\epsilon|\bm X,\bm Y,\sigma^2\right)$ by
\begin{eqnarray}
\lefteqn{2\epsilon^{-2}\left\{\sup_{t\in[0,1]}\left|\E(f(t)|\bm X,\bm Y,\sigma^2)-f_{0}(t)\right|^2\right.}\nonumber\\
&&\left.+\E\left[\sup_{t\in[0,1]}\left|f(t)-\E(f(t)|\bm X,\bm Y,\sigma^2)\right|^2\large{|}\bm X,\bm Y,\sigma^2\right]\right\}.\label{chv}
\end{eqnarray}
By Lemma 6, the first term inside the bracket above is $O_{P_0}(k_n^2/n)+O_{P_0}(k^{1-2m}_n)$. For $\bm \varepsilon^*:={(\bm X^T_n\bm X_n+k_nn^{-2}\bm I_{k_n+m-1})}^{1/2}\bm\beta-{(\bm X^T_n\bm X_n+k_nn^{-2}\bm I_{k_n+m-1})}^{-1/2}\bm X^T_n\bm Y$, we have $\bm\varepsilon^*|\bm X,\bm Y,\sigma^2\sim N(\bm 0, \sigma^2\bm I_{k_n+m-1})$. Writing $$\sup_{t\in[0,1]}|f(t)-\E[f(t)|\bm X,\bm Y,\sigma^2]|=
\sup_{t\in[0,1]}\left|(\bm N(t))^T{(\bm X^T_n\bm X_n+k_nn^{-2}\bm I_{k_n+m-1})}^{-1/2}\bm\varepsilon^*\right|$$
and using the Cauchy-Schwarz inequality and Lemma 12, the second term inside the bracket in \eqref{chv} is seen to be $O_{P_0}(k_n^{2}/n)$. By the assumed conditions on $m$ and $k_n$, the lemma follows.
\end{proof}
The next lemma proves the posterior consistency of $\bm\theta$ using the results of Lemmas $6$ and $7$.


\begin{lemma}
If $m\geq3$ and $n^{1/(2m)}\ll k_n\ll n^{1/2}$, then for all $\epsilon>0$, $\Pi^*_n(\|\bm\theta-\bm\theta_0\|>\epsilon|\bm X,\bm Y,\sigma^2)=o_{P_0}(1).$
\end{lemma}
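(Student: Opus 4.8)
The plan is to treat this as an argmin-consistency (M-estimation) statement for the posterior. Since $\bm\theta$ minimizes the random criterion $\bm\eta\mapsto R_{f,n}(\bm\eta)$ over $\Theta$ while $\bm\theta_0$ minimizes the deterministic criterion $\bm\eta\mapsto R_{f_0}(\bm\eta)$, it suffices to show that, with posterior probability tending to one (in $P_0$-probability), the random criterion is uniformly close over the compact set $\Theta$ to the population criterion, and then to invoke the well-separation assumption (A5). I would first isolate the two sources of discrepancy between the criteria: replacing $f_0$ by the random spline $f=\bm\beta^T\bm N$, and replacing the exact solution $f_{\bm\eta}$ by its RK$4$ surrogate $f_{\bm\eta,r_n}$.

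For the uniform closeness, recall $R_{f,n}(\bm\eta)=\|f-f_{\bm\eta,r_n}\|_g$ and $R_{f_0}(\bm\eta)=\|f_0-f_{\bm\eta}\|_g$. By the reverse triangle inequality for $\|\cdot\|_g$,
\[
\sup_{\bm\eta\in\Theta}\bigl|R_{f,n}(\bm\eta)-R_{f_0}(\bm\eta)\bigr|
\le \|f-f_0\|_g+\sup_{\bm\eta\in\Theta}\|f_{\bm\eta,r_n}-f_{\bm\eta}\|_g
\le C\Bigl(\sup_{t\in[0,1]}|f(t)-f_0(t)|+r_n^{-4}\Bigr),
\]
where the first term uses $\|h\|_g\le(\int_0^1 g)^{1/2}\sup_t|h(t)|$ and the second uses the RK$4$ bound \eqref{rk4}, the uniformity of the $O(r_n^{-4})$ term over $\bm\eta$ following from compactness of $\Theta$ and continuity in $\bm\eta$. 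By Lemma 7 (which itself rests on the bias estimate of Lemma 6), the first term on the right is $o_{P_0}(1)$ off a posterior-negligible set, and $r_n^{-4}\to0$ by \eqref{grid}; hence $\sup_{\bm\eta\in\Theta}|R_{f,n}(\bm\eta)-R_{f_0}(\bm\eta)|$ is small with posterior probability tending to one.

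Next I would run the standard argmin comparison. Since $R_{f,n}(\bm\theta)\le R_{f,n}(\bm\theta_0)$ by definition of $\bm\theta$,
\[
R_{f_0}(\bm\theta)-R_{f_0}(\bm\theta_0)\le 2\sup_{\bm\eta\in\Theta}\bigl|R_{f,n}(\bm\eta)-R_{f_0}(\bm\eta)\bigr|.
\]
By (A5) the point $\bm\theta_0$ is a well-separated minimizer of $R_{f_0}$, so for each $\epsilon>0$ there is $\delta>0$ with $\inf_{\|\bm\eta-\bm\theta_0\|\ge\epsilon}R_{f_0}(\bm\eta)\ge R_{f_0}(\bm\theta_0)+\delta$. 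Consequently the event $\{\|\bm\theta-\bm\theta_0\|>\epsilon\}$ forces $2\sup_{\bm\eta\in\Theta}|R_{f,n}(\bm\eta)-R_{f_0}(\bm\eta)|\ge\delta$, and by the displayed bound it is contained, for $n$ large enough that $2Cr_n^{-4}<\delta/2$, in $\{\sup_{t}|f(t)-f_0(t)|>\delta/(4C)\}$. Taking posterior probabilities and applying Lemma 7 then yields $\Pi^*_n(\|\bm\theta-\bm\theta_0\|>\epsilon|\bm X,\bm Y,\sigma^2)=o_{P_0}(1)$.

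The routine parts are the two triangle-inequality bounds; the substantive input is Lemma 7, into which the genuine analytic work has already been packaged. The only remaining points needing care are the uniformity of the RK$4$ error across the compact parameter set and the measurability of the functional $\bm\beta\mapsto\bm\theta=\arg\min_{\bm\eta\in\Theta}R_{f,n}(\bm\eta)$, so that the induced posterior probabilities are well defined; modulo these, the result is a direct consequence of the uniform posterior contraction of Lemma 7 and the well-separation condition (A5).
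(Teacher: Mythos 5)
Your proposal is correct and follows essentially the same route as the paper's proof: the same triangle-inequality decomposition of $\sup_{\bm\eta\in\Theta}|R_{f,n}(\bm\eta)-R_{f_0}(\bm\eta)|$ into the sup-norm spline error (controlled by Lemma 7) plus the uniform RK$4$ error $O(r_n^{-4})$, followed by the same argmin comparison $R_{f_0}(\bm\theta)-R_{f_0}(\bm\theta_0)\le 2\sup_{\bm\eta}|R_{f,n}(\bm\eta)-R_{f_0}(\bm\eta)|$ and an appeal to the well-separation assumption (A5). The only differences are cosmetic (your explicit event inclusion versus the paper's direct posterior-probability bound, and your side remarks on measurability), so nothing further is needed.
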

\begin{proof}
By the triangle inequality,
\begin{eqnarray}
|R_{f,n}(\bm\eta)-R_{f_0}(\bm\eta)|&\leq&\|f(\cdot)-f_0(\cdot)\|_g+\|f_{\bm\eta,r_n}(\cdot)-f_{\bm\eta}(\cdot)\|_g\nonumber\\
&\leq&c'_1\sup_{t\in[0,1]}|f(t)-f_{0}(t)|+c'_2r^{-4}_n,\nonumber
\end{eqnarray}
for appropriately chosen constants $c'_1$ and $c'_2$. For a sequence $\tau_n\rightarrow0$, define $$T_n=\{f:\sup_{t\in[0,1]}|f(t)-f_0(t)|\leq\tau_n\}.$$ By Lemma $7$ we can choose $\tau_n$ to satisfy $\Pi(T^c_n|\bm X,\bm Y,\sigma^2)=o_{P_0}(1)$. Hence for $f\in T_n$,
\begin{eqnarray}
\sup_{\bm\eta\in\Theta}|R_{f,n}(\bm\eta)-R_{f_0}(\bm\eta)|\leq c'_1\tau_n+c'_2r^{-4}_n=o(1).\nonumber
\end{eqnarray}
Therefore, for any $\delta>0$, $\Pi^*_n\left(\sup_{\bm\eta\in\Theta}|R_{f,n}(\bm\eta)-R_{f_0}(\bm\eta)|>\delta|\bm X,
\bm Y,\sigma^2\right)=o_{P_0}(1)$. By assumption \eqref{assmp}, for $\|\bm\theta-\bm\theta_0\|\geq\epsilon$ there exists a $\delta>0$ such that
\begin{eqnarray}
\delta<R_{f_0}(\bm\theta)-R_{f_0}(\bm\theta_0)
&\leq&R_{f_0}(\bm\theta)-R_{f,n}(\bm\theta)+R_{f,n}(\bm\theta_0)-R_{f_0}(\bm\theta_0)\nonumber\\
&\leq&2\sup_{\bm\eta\in\Theta}|R_{f,n}(\bm\eta)-R_{f_0}(\bm\eta)|,\nonumber
\end{eqnarray}
since $R_{f,n}(\bm\theta)\leq R_{f,n}(\bm\theta_0)$. Consequently,
\begin{eqnarray}
\Pi^*_n(\|\bm\theta-\bm\theta_0\|>\epsilon|\bm X,\bm Y,\sigma^2)&\leq&\Pi^*_n\left(\sup_{\bm\eta\in\Theta}|R_{f,n}(\bm\eta)-R_{f_0}(\bm\eta)|>{\delta}/{2}|\bm X,\bm Y,\sigma^2\right)\nonumber\\
&=&o_{P_0}(1).\nonumber
\end{eqnarray}
\end{proof}
In the following lemma we approximate $\sqrt{n}(\bm\theta-\bm\theta_0)$ by a linear functional of $f$ which is later used in Theorem $4.2$ to obtain the limiting posterior distribution of $\sqrt{n}(\bm\theta-\bm\theta_0)$.


\begin{lemma}
Let $m$ be an integer greater than or equal to $3$ and $n^{1/(2m)}\ll k_n\ll n^{1/2}$. Then there exists $E_n\subseteq C^m((0,1))\times\bm\Theta$ with $\Pi(E^c_n|\bm X,\bm Y,\sigma^2)=o_{P_0}(1)$, such that uniformly for $(f, \bm\theta)\in E_n$,
\begin{eqnarray}
\|\sqrt{n}(\bm\theta-\bm\theta_0)-\bm{J}_{\bm\theta_0}^{-1}\sqrt{n}(\bm\Gamma(f)-\bm\Gamma(f_{0}))\|&\lesssim&\sqrt{n}r^{-4}_n,\label{thm1}
\end{eqnarray}
where $\bm\Gamma(z)=\int_0^1 \left(\dot f_{\bm\theta_0}(t)\right)^Tz(t)g(t)dt$.
\end{lemma}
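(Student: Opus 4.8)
The plan is to read $\bm\theta$ as a Z-estimator. Since $\bm\theta=\arg\min_{\bm\eta\in\Theta}\int_0^1|f(t)-f_{\bm\eta,r_n}(t)|^2g(t)\,dt$, differentiating in $\bm\eta$ shows that, on the event that $\bm\theta$ is interior, it solves the estimating equation
\[
\bm\Psi_n(\bm\theta,f):=\int_0^1\big(\dot f_{\bm\theta,r_n}(t)\big)^T\big(f(t)-f_{\bm\theta,r_n}(t)\big)g(t)\,dt=\bm 0,
\]
while $\bm\theta_0$ satisfies $\int_0^1(\dot f_{\bm\theta_0}(t))^T(f_0(t)-f_{\bm\theta_0}(t))g(t)\,dt=\bm 0$. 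First I would fix the set on which the linearization is carried out: using the posterior concentration of Lemma 7 and the consistency of Lemma 8, I take $E_n=\{(f,\bm\theta):\sup_t|f(t)-f_0(t)|\le\tau_n,\ \|\bm\theta-\bm\theta_0\|\le\tau_n\}$ for a suitable $\tau_n\to0$, so that $\Pi(E_n^c|\bm X,\bm Y,\sigma^2)=o_{P_0}(1)$, $f$ is uniformly bounded on $E_n$, and (since $\bm\theta_0$ is interior by (A6)) $\bm\theta$ is interior, validating the estimating equation.

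I would then Taylor expand $\bm\Psi_n(\cdot,f)$ about $\bm\theta_0$. Its Jacobian is
\[
\dot{\bm\Psi}_n(\bm\eta,f)=\int_0^1\Big(\ddot f_{\bm\eta,r_n}(t)\big(f(t)-f_{\bm\eta,r_n}(t)\big)-\big(\dot f_{\bm\eta,r_n}(t)\big)^T\dot f_{\bm\eta,r_n}(t)\Big)g(t)\,dt,
\]
which, evaluated at $(\bm\theta_0,f_0)$ with the RK$4$ functions replaced by the exact ones, equals $-\bm J_{\bm\theta_0}$ by the definition in (A6); nonsingularity of $\bm J_{\bm\theta_0}$ lets me invert it. Two ingredients make the expansion explicit: the RK$4$ bounds \eqref{rk4}, which let me replace $f_{\cdot,r_n},\dot f_{\cdot,r_n},\ddot f_{\cdot,r_n}$ by $f_\cdot,\dot f_\cdot,\ddot f_\cdot$ at a uniform cost $O(r_n^{-4})$ on $E_n$; and the stationarity of $\bm\theta_0$, which annihilates $\int_0^1(\dot f_{\bm\theta_0})^T(f_0-f_{\bm\theta_0})g$ and, by linearity of $\bm\Gamma$, collapses the intercept to
\[
\bm\Psi_n(\bm\theta_0,f)=\int_0^1(\dot f_{\bm\theta_0}(t))^T(f(t)-f_0(t))g(t)\,dt+O(r_n^{-4})=\bm\Gamma(f)-\bm\Gamma(f_0)+O(r_n^{-4}).
\]

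Writing $-\dot{\bm\Psi}_n(\bm\theta_0,f)=\bm J_{\bm\theta_0}+\bm E_n$ with $\bm E_n=-\int_0^1\ddot f_{\bm\theta_0}(f-f_0)\,g+O(r_n^{-4})$, solving the expanded equation (a Neumann inversion, legitimate because $\|\bm E_n\|\to0$ on $E_n$) and scaling by $\sqrt n$ expresses $\sqrt n(\bm\theta-\bm\theta_0)-\bm J_{\bm\theta_0}^{-1}\sqrt n(\bm\Gamma(f)-\bm\Gamma(f_0))$ as $\bm J_{\bm\theta_0}^{-1}$ times the sum of (a)~$\sqrt n\,O(r_n^{-4})$, (b)~$-\sqrt n\,\bm E_n(\bm\theta-\bm\theta_0)$, and (c)~the quadratic remainder $\tfrac{\sqrt n}{2}(\bm\theta-\bm\theta_0)^T\ddot{\bm\Psi}_n(\bar{\bm\theta},f)(\bm\theta-\bm\theta_0)$ for some $\bar{\bm\theta}$ on the segment to $\bm\theta_0$. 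Term (a) is exactly the advertised $O(\sqrt n r_n^{-4})$.

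The main obstacle is showing that the nonparametric remainders (b) and (c) are of smaller order. I would first establish the refined rate $\|\bm\theta-\bm\theta_0\|=O_{P_0}(n^{-1/2})$ from the leading identity, using that $\sqrt n(\bm\Gamma(f)-\bm\Gamma(f_0))$ is stochastically bounded under the posterior (the same computation underlying the boundedness of $\bm\mu_n,\bm\Sigma_n$ in Lemma 10) together with the crude consistency already in hand. Term (c) is then controlled by the uniform boundedness of $\ddot{\bm\Psi}_n$ on $E_n$ and $\|\bm\theta-\bm\theta_0\|^2=O_{P_0}(n^{-1})$, giving $O_{P_0}(n^{-1/2})$. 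Term (b) is the delicate one: its size is driven by $\int_0^1\ddot f_{\bm\theta_0}(f-f_0)\,g$, which is again a bounded linear functional of $f-f_0$, so the Lemma 10 argument forces $\|\bm E_n\|=O_{P_0}(n^{-1/2})$, whence (b), which already carries $\sqrt n$, is $\sqrt n\cdot O_{P_0}(n^{-1})=O_{P_0}(n^{-1/2})$. Under $m\ge3$, $n^{1/(2m)}\ll k_n\ll n^{1/2}$ and $r_n\gg n^{1/8}$ every contribution is $o_{P_0}(1)$, the RK$4$ term carrying the stated order $\sqrt n r_n^{-4}$; the least routine part of the argument is precisely this bookkeeping, namely verifying that the genuinely second-order cross and quadratic terms do not exceed the RK$4$ contribution up to lower-order stochastic fluctuations, which is all that the subsequent use in Theorem \ref{bvm_RKTB} requires.
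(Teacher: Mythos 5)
Your proposal is correct and follows essentially the same route as the paper's proof: both start from the two stationarity equations $\int_0^1(\dot f_{\bm\theta,r_n})^T(f-f_{\bm\theta,r_n})g=\bm 0$ and $\int_0^1(\dot f_{\bm\theta_0})^T(f_0-f_{\bm\theta_0})g=\bm 0$, work on the same event $E_n$ supplied by Lemmas 7 and 8, linearize around $\bm\theta_0$ using the RK$4$ bounds \eqref{rk4}, invert a Jacobian that converges to $\bm J_{\bm\theta_0}$, and dispose of the residual non-RK$4$ terms via the Lemma 10 stochastic boundedness of $\sqrt{n}\bigl(\bm\Gamma(f)-\bm\Gamma(f_0)\bigr)$ under the posterior. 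The only difference is bookkeeping: the paper folds the entire expansion into an exact mean-value matrix $\bm M(f,\bm\theta)$ satisfying $\|\bm M(f,\bm\theta)^{-1}-\bm J_{\bm\theta_0}^{-1}\|=o(1)$ on $E_n$, whereas you keep an explicit quadratic remainder and add a refined $n^{-1/2}$ rate for $\|\bm\theta-\bm\theta_0\|$; both arguments in fact yield $O(\sqrt{n}r_n^{-4})$ plus an $o_{P_0}(1)$ posterior-probability term, which is exactly what the application in Theorem \ref{bvm_RKTB} needs.
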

\begin{proof}
By definitions of $\bm\theta$ and $\bm\theta_0$,
\begin{eqnarray}
\int_0^1\left(\dot f_{\theta,r_n}(t)\right)^T(f(t)-f_{\theta,r_n}(t))g(t)dt&=&\bm 0,\label{theta_1}\\
\int_0^1\left(\dot f_{\theta_0}(t)\right)^T(f_0(t)-f_{\theta_0}(t))g(t)dt&=&\bm 0.\label{theta0}
\end{eqnarray}
We can rewrite \eqref{theta_1} as
\begin{eqnarray}
\lefteqn{\int_0^1\left(\dot f_{\theta_0}(t)\right)^T(f(t)-f_{\bm\theta}(t))g(t)dt
+\int_0^1\left(\dot f_{\theta}(t)-\dot f_{\theta_0}(t)\right)^T(f(t)- f_{\bm\theta}(t))g(t)dt}\nonumber\\
&&+\int_0^1\left(\dot f_{\theta,r_n}(t)-\dot f_{\theta}(t)\right)^T(f(t)- f_{\bm\theta}(t))g(t)dt\nonumber\\
&&+\int_0^1\left(\dot f_{\theta,r_n}(t)\right)^T(f_{\bm\theta}(t)-f_{\bm\theta,r_n}(t))g(t)dt=\bm 0\nonumber.
\end{eqnarray}
Subtracting \eqref{theta0} from the above equation we get
\begin{eqnarray}
\lefteqn{\int_0^1\left(\dot f_{\theta_0}(t)\right)^T(f(t)-f_0(t))g(t)dt
-\int_0^1\left(\dot f_{\theta_0}(t)\right)^T(f_{\bm\theta}(t)-f_{\bm\theta_0}(t))g(t)dt}\nonumber\\
&&+\int_0^1\left(\dot f_{\theta}(t)-\dot f_{\theta_0}(t)\right)^T(f(t)- f_{\bm\theta}(t))g(t)dt\nonumber\\
&&+\int_0^1\left(\dot f_{\theta,r_n}(t)-\dot f_{\theta}(t)\right)^T(f(t)- f_{\bm\theta}(t))g(t)dt\nonumber\\
&&+\int_0^1\left(\dot f_{\theta,r_n}(t)\right)^T(f_{\bm\theta}(t)-f_{\bm\theta,r_n}(t))g(t)dt=\bm 0\nonumber.
\end{eqnarray}
Replacing the difference between the values of a function at two different values of an argument by the integral of the corresponding partial derivative, we get
\begin{eqnarray}
\lefteqn{\bm M(f,\bm\theta)(\bm\theta-\bm\theta_0)}\nonumber\\
&=&\int_0^1\left(\dot f_{\theta_0}(t)\right)^T(f(t)- f_0(t))g(t)dt\nonumber\\
&&+\int_0^1\left(\dot f_{\theta,r_n}(t)-\dot f_{\theta}(t)\right)^T(f(t)-f_{\bm\theta}(t))g(t)dt\nonumber\\
&&+\int_0^1\left(\dot f_{\theta,r_n}(t)\right)^T(f_{\bm\theta}(t)-f_{\bm\theta,r_n}(t))g(t)dt,\nonumber
\end{eqnarray}
where $\bm M(f,\bm\theta)$ is given by
\begin{eqnarray}
\lefteqn{\int_0^1\int_0^1\left(\dot f_{\theta_0}(t)\right)^T\dot f_{\theta_0+\lambda(\bm\theta-\bm\theta_0)}(t)d\lambda\, g(t)dt}\nonumber\\
&&-\int_0^1\int_0^1\ddot f_{\bm\theta_0+\lambda(\bm\theta-\bm\theta_0)}(t)(f_0(t)-f_{\theta_0}(t))d\lambda\, g(t)dt\nonumber\\
&&-\int_0^1\int_0^1\ddot f_{\bm\theta_0+\lambda(\bm\theta-\bm\theta_0)}(t)(f_{\bm\theta_0}(t)-f_{\bm\theta}(t))d\lambda g(t)dt\nonumber\\
&&-\int_0^1\int_0^1\ddot f_{\bm\theta_0+\lambda(\bm\theta-\bm\theta_0)}(t)(f(t)-f_0(t))d\lambda\, g(t)dt\nonumber\\
&&-\int_0^1\int_0^1\left(\dot f_{\theta}(t)-\dot f_{\theta_0}(t)\right)^T\dot f_{\theta_0+\lambda(\bm\theta-\bm\theta_0)}(t)d\lambda\, g(t)dt.\nonumber
\end{eqnarray}
For a sequence $\epsilon\rightarrow0$, define
\begin{eqnarray}
E_n=\{(f,\bm\theta):\sup_{t\in[0,1]}|f(t)-f_{0}(t)|\leq\epsilon_n, \|\bm\theta-\bm\theta_0\|\leq\epsilon_n\}.\nonumber
\end{eqnarray}
By Lemmas $7$ and $8$, we can choose $\epsilon_n$ so that $\Pi^*_n(E^c_n|\bm X,\bm Y,\sigma^2)=o_{P_0}(1)$. Then, $\bm M(f,\bm\theta)$ is invertible and the eigenvalues of $[\bm M(f,\bm\theta)]^{-1}$ are bounded away from $0$ and $\infty$ for sufficiently large $n$ and $\|(\bm M(f,\bm\theta))^{-1}-\bm J^{-1}_{\bm\theta_0}\|=o(1)$ for $(f, \bm\theta)\in E_n$. Using \eqref{rk4}, on $E_n$
\begin{eqnarray}
\lefteqn{\sqrt{n}(\bm\theta-\bm\theta_0)}\nonumber\\
&=&\left(\bm J^{-1}_{\bm\theta_0}+o(1)\right)\left(\sqrt{n}\int_0^1\left(\dot f_{\theta_0}(t)\right)^T(f(t)-f_0(t))g(t)dt+O(\sqrt{n}r^{-4}_n)\right).\nonumber
\end{eqnarray}
Note that $\sqrt{n}\bm J_{\bm\theta_0}\left(\bm\Gamma(f)-\bm\Gamma(f_0)\right)=\sqrt{n}\bm H_{n}^T\bm\beta-\sqrt{n}\bm J^{-1}_{\bm\theta_0}\bm\Gamma(f_0)$. It was shown in the proof of Theorem $4.2$ that for a given $\sigma^2$, the total variation distance between the posterior distribution of $\sqrt{n}\bm H_{n}^T\bm\beta-\sqrt{n}\bm J^{-1}_{\bm\theta_0}\bm\Gamma(f_0)$ and $N(\bm\mu_n,\sigma^2\bm\Sigma_n)$ converges in $P_0$-probability to $0$. By Lemma $10$, both $\bm\mu_n$ and $\bm\Sigma_n$ are stochastically bounded. Thus the posterior distribution of $\bm J^{-1}_{\bm\theta_0}\sqrt{n}\left(\bm\Gamma(f)-\bm\Gamma(f_0)\right)$ assigns most of its mass inside a large compact set with high true probability.
\end{proof}
The next lemma describes the asymptotic behavior of the mean and variance of the limiting normal distribution given by Theorem 4.2.


\begin{lemma}
The mean and variance of the limiting normal approximation given by Theorem 4.2 are stochastically bounded.
\end{lemma}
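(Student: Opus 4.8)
The plan is to identify the mean and variance in Theorem 4.2 as $\bm\mu_n$ and $\sigma_0^2\bm\Sigma_n$ and treat them separately, handling the variance first since the bound on $\bm\mu_n$ reuses it. I would begin by showing that $\bm\Sigma_n=n\bm H_n^T(\bm X_n^T\bm X_n)^{-1}\bm H_n$ converges in $P_0$-probability to the finite matrix $\bm B$. The key step is to replace $n^{-1}\bm X_n^T\bm X_n$ by the population Gram matrix $\bm G_n=\int_0^1\bm N(t)\bm N^T(t)g(t)\,dt$; writing $\bm a_k=\int_0^1\bm N(t)C_k(t)g(t)\,dt$ so that $\bm H_n=(\bm a_1,\ldots,\bm a_p)$, one has $(\bm\Sigma_n)_{kk'}=n\,\bm a_k^T(\bm X_n^T\bm X_n)^{-1}\bm a_{k'}\approx\bm a_k^T\bm G_n^{-1}\bm a_{k'}=\langle\mathscr{P}_nC_k,\mathscr{P}_nC_{k'}\rangle_g$, where $\mathscr{P}_n$ denotes $L_2(g)$-projection onto the spline space. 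Since each $C_k$ is smooth (as $\dot f_{\bm\theta_0}$ is continuous and $\bm J_{\bm\theta_0}^{-1}$ is constant), $\mathscr{P}_nC_k\to C_k$ in $L_2(g)$ as $k_n\to\infty$, so $(\bm\Sigma_n)_{kk'}\to\langle C_k,C_{k'}\rangle_g=B_{kk'}$. Thus $\bm\Sigma_n=O_{P_0}(1)$, and by the same computation $\sigma_0^2\bm\Sigma_n\stackrel{P_0}{\rightarrow}\sigma_0^2\bm B$, the limit invoked in Corollary 2.

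For the mean I would write $\bm Y=f_0(\bm X)+\bm\varepsilon$ and split
\[
\bm\mu_n=\sqrt n\Big(\bm H_n^T(\bm X_n^T\bm X_n)^{-1}\bm X_n^Tf_0(\bm X)-\textstyle\int_0^1\bm C(t)f_0(t)g(t)\,dt\Big)+\sqrt n\,\bm H_n^T(\bm X_n^T\bm X_n)^{-1}\bm X_n^T\bm\varepsilon
\]
into a deterministic bias part and a stochastic part. For the bias part, choose the spline coefficient $\bm\beta^*$ of \eqref{spldis} with $\sup_t|f_0(t)-\bm N^T(t)\bm\beta^*|=O(k_n^{-m})$ and decompose $(\bm X_n^T\bm X_n)^{-1}\bm X_n^Tf_0(\bm X)=\bm\beta^*+(\bm X_n^T\bm X_n)^{-1}\bm X_n^T(f_0(\bm X)-\bm X_n\bm\beta^*)$. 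The term $\sqrt n(\bm H_n^T\bm\beta^*-\int_0^1\bm C f_0 g)=\sqrt n\int_0^1\bm C(t)(\bm N^T(t)\bm\beta^*-f_0(t))g(t)\,dt$ is bounded crudely by $O(\sqrt n\,k_n^{-m})$, which is $o(1)$ precisely because $k_n\gg n^{1/(2m)}$. The remaining term $\sqrt n\,\bm H_n^T(\bm X_n^T\bm X_n)^{-1}\bm X_n^T(f_0(\bm X)-\bm X_n\bm\beta^*)$ I would bound by $\|A\|\,\|\bm v\|$ with $A=\sqrt n\,\bm H_n^T(\bm X_n^T\bm X_n)^{-1/2}$ and $\bm v=(\bm X_n^T\bm X_n)^{-1/2}\bm X_n^T(f_0(\bm X)-\bm X_n\bm\beta^*)$; here the spectral norm satisfies $\|A\|^2=\mathrm{maxeig}(\bm\Sigma_n)=O_{P_0}(1)$, while $\|\bm v\|\le\|f_0(\bm X)-\bm X_n\bm\beta^*\|=O(\sqrt n\,k_n^{-m})$ since the hat matrix $\bm X_n(\bm X_n^T\bm X_n)^{-1}\bm X_n^T$ is an orthogonal projection. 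Hence the whole bias part is $O_{P_0}(\sqrt n\,k_n^{-m})=o_{P_0}(1)$.

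For the stochastic part $\bm W:=\sqrt n\,\bm H_n^T(\bm X_n^T\bm X_n)^{-1}\bm X_n^T\bm\varepsilon$, conditioning on $\bm X$ gives $\E_0(\bm W\mid\bm X)=\bm 0$ and $\Var_0(\bm W\mid\bm X)=\sigma_0^2\,n\bm H_n^T(\bm X_n^T\bm X_n)^{-1}\bm H_n=\sigma_0^2\bm\Sigma_n$. Since $\bm\Sigma_n=O_{P_0}(1)$, a conditional Chebyshev argument (bound $P_0(\|\bm W\|>M\mid\bm X)\le\sigma_0^2\,\mathrm{tr}(\bm\Sigma_n)/M^2$ on the high-probability event where $\mathrm{tr}(\bm\Sigma_n)$ is bounded) yields $\bm W=O_{P_0}(1)$. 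Combining, $\bm\mu_n=o_{P_0}(1)+O_{P_0}(1)=O_{P_0}(1)$, which finishes the claim. The main obstacle is the spectral comparison $n^{-1}\bm X_n^T\bm X_n\approx\bm G_n$ uniformly over the growing dimension $k_n+m-1$, which is exactly where the upper bound $k_n\ll n^{1/2}$ enters; I would import this from Lemma 12 and the local spline asymptotics of \citet{zhou1998local} rather than reprove it.
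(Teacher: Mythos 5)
Your proposal is correct and follows essentially the same route as the paper's proof: the same bias/noise decomposition of $\bm\mu_n$ (spline approximant $\bm\beta^*$ plus Cauchy--Schwarz for the bias, giving $O_{P_0}(\sqrt{n}\,k_n^{-m})=o_{P_0}(1)$, and a conditional Chebyshev bound with conditional variance $\sigma_0^2\bm\Sigma_n$ for the noise), and the same empirical-to-population Gram matrix comparison, entering through $k_n\ll n^{1/2}$, to show $\bm\Sigma_n$ converges to a fixed matrix. The only cosmetic differences are that you approximate $C_k$ by its $L_2(g)$-projection onto the spline space, which absorbs the cross terms the paper controls explicitly via the de Boor sup-norm approximant $\tilde C_k$, and that you import the Gram comparison from Lemma 12 and \citet{zhou1998local}, where the paper derives it directly from Donsker's theorem together with the $O(k_n^{-1})$ eigenvalue bounds.
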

\begin{proof}
First we study the asymptotic behavior of the matrix $\Var(\bm\mu_n|\bm X)=\bm\Sigma_n=n\bm H^T_n(\bm X^T_n\bm X_n)^{-1}\bm H_n$.
If $C_{k}(\cdot)\in C^{m^*}(0,1)$ for some $1\leq m^*<m$, then by equation $(2)$ of \citet[page 167]{de1978practical}, we have for all $k=1,\ldots,p$, $$\sup\{|C_{k}(t)-\tilde C_{k}(t)|:t\in[0,1]\}=O(k_n^{-1}),$$ where $\tilde C_{k}(\cdot)=\bm \alpha_{k}^T\bm N(\cdot)$ and $\bm\alpha^T_{k}=(C_{k}(t^*_1),\ldots,C_{k}(t^*_{k_n+m-1}))$ with appropriately chosen $t^*_1,\ldots,t^*_{k_n+m-1}$.
We can write $\bm H^T_{n}\left(\bm X^T_n\bm X_n\right)^{-1}\bm H_{n}$ as
\begin{eqnarray}
\lefteqn{(\bm H_{n}-\tilde{\bm H}_{n})^T\left(\bm X^T_n\bm X_n\right)^{-1}(\bm H_{n}-\tilde{\bm H}_{n})
+\tilde{\bm H}^T_{n}\left(\bm X^T_n\bm X_n\right)^{-1}\tilde{\bm H}_{n}}\nonumber\\
&&+(\bm H_{n}-\tilde{\bm H}_{n})^T\left(\bm X^T_n\bm X_n\right)^{-1}\tilde{\bm H}_{n}+\tilde{\bm H}^T_{n}\left(\bm X^T_n\bm X_n\right)^{-1}(\bm H_{n}-\tilde{\bm H}_{n}),\nonumber
\end{eqnarray}
where the $k^{th}$ row of $\tilde{\bm H}^T_{n}$ is given by $\int_0^1\tilde{C_{k}}(t)(\bm N(t))^Tg(t)dt$ for $k=1,\ldots,p.$ Let us denote $\bm A=(\bm\alpha_1,\ldots,\bm\alpha_p)$. Then
\begin{eqnarray}
\lefteqn{\tilde{\bm H}^T_{n}\left(\bm X^T_n\bm X_n\right)^{-1}\tilde{\bm H}_{n}}\nonumber\\
&=&n^{-1}\bm A^T\left(\int_0^1\bm N(t)\bm N^T(t)g(t)dt\right)\left(\frac{\bm X^T_n\bm X_n}{n}\right)^{-1}\left(\int_0^1\bm N(t)\bm N^T(t)g(t)dt\right)\bm A.\nonumber
\end{eqnarray}
We show that
\begin{eqnarray}
\lefteqn{\bm A^T\left(\int_0^1\bm N(t)\bm N^T(t)g(t)dt\right)\left(\frac{\bm X^T_n\bm X_n}{n}\right)^{-1}\left(\int_0^1\bm N(t)\bm N^T(t)g(t)dt\right)\bm A}\nonumber\\
&&-\bm A^T\left(\int_0^1\bm N(t)\bm N^T(t)g(t)dt\right)\bm A\nonumber
\end{eqnarray}
converges in $P_0$-probability to the null matrix of order $p$. For a $\bm l\in\mathbb{R}^p$, let $\bm c=\left(\int_0^1\bm N(t)\bm N^T(t)g(t)dt\right)\bm A\bm l$.
Then we can write $$\bm l^T\bm A^T\left(\int_0^1\bm N(t)\bm N^T(t)g(t)dt\right)\left(\frac{\bm X^T_n\bm X_n}{n}\right)^{-1}\left(\int_0^1\bm N(t)\bm N^T(t)g(t)dt\right)\bm A\bm l$$ as $\bm c^T\left(\frac{\bm X^T_n\bm X_n}{n}\right)^{-1}\bm c$. Let us denote by $Q_n$ the empirical distribution function of $X_1,\ldots,X_n$. Note that
\begin{eqnarray}
\lefteqn{\left|\bm c^T\left(\frac{\bm X^T_n\bm X_n}{n}\right)\bm c-\bm c^T\left(\int_0^1\bm N(t)\bm N^T(t)g(t)dt\right)\bm c\right|}\nonumber\\
&\leq&\sup_{t\in[0,1]}|Q_n(t)-G(t)|\bm c^T\bm c\sup_{t\in[0,1]}\|\bm N(t)\|^2\nonumber\\
&=&O_{P_0}(n^{-1/2})\bm c^T\bm c\nonumber\\
&=&O_{P_0}(n^{-1/2}k_n)\bm c^T\left(\int_0^1\bm N(t)\bm N^T(t)g(t)dt\right)\bm c\nonumber,
\end{eqnarray}
the third step following from Donsker's Theorem and the fact that $$\sup_{t\in[0,1]}\|\bm N(t)\|^2\|\leq1.$$ In the last step we used the fact that the eigenvalues of the matrix $\int_0^1\bm N(t)\bm N^T(t)g(t)dt$ are $O(k^{-1}_n)$ as proved in Lemma 6.1 of \citet{zhou1998local}. In that same lemma it was also proved that the eigenvalues of the matrix $\left(\bm X^T_n\bm X_n/n\right)$ are $O_{P_0}(k^{-1}_n)$. Both these results are applied in the fourth step of the next calculation. Using the fact that $\|\bm R^{-1}-\bm S^{-1}\|\leq\|\bm S^{-1}\|\|\bm R-\bm S\|\|\bm S^{-1}\|$ for two nonsingular matrices $\bm R$ and $\bm S$ of the same order, we get
\begin{eqnarray}
\lefteqn{\left|\bm c^T\left(\frac{\bm X^T_n\bm X_n}{n}\right)^{-1}\bm c-\bm c^T\left(\int_0^1\bm N(t)\bm N^T(t)g(t)dt\right)^{-1}\bm c\right|}\nonumber\\
&=&O_{P_0}(n^{-1/2}k_n)\bm c^T\left(\frac{\bm X^T_n\bm X_n}{n}\right)^{-1}\bm c\nonumber\\
&=&O_{P_0}(n^{-1/2}k_n)\bm l^T\bm A^T\left(\int_0^1\bm N(t)\bm N^T(t)g(t)dt\right)\left(\frac{\bm X^T_n\bm X_n}{n}\right)^{-1}\nonumber\\
&&\times\left(\int_0^1\bm N(t)\bm N^T(t)g(t)dt\right)\bm A\bm l\nonumber\\
&=&O_{P_0}(n^{-1/2}k_n)k^{-1}_n\bm l^T\bm A^T\bm A\bm l=o_{P_0}(1).\nonumber
\end{eqnarray}
Now note that the $(i,j)^{th}$ element of the $p\times p$ matrix $\bm A^T\left(\int_0^1\bm N(t)\bm N^T(t)g(t)dt\right)\bm A$ is given by $\int_0^1\tilde{C}_i(t)\tilde{C}_j(t)g(t)dt$, which converges to $\int_0^1{C}_i(t){C}_j(t)g(t)dt$, the $(i,j)^{th}$ element of the matrix $\int_0^1{\bm C}(t){\bm C}^T(t)g(t)dt$ which is $\bm J^{-1}_{\bm\theta_0}\int_0^1\left(\dot{f}_{\bm\theta_0}(t)\right)^T\dot{f}_{\bm\theta_0}(t)g(t)dt\left(\bm J^{-1}_{\bm\theta_0}\right)^T$.
Let us denote by $\bm 1_{k_n+m-1}$ the $(k_n+m-1)$-component vector with all elements $1$. Then for $k=1,\ldots,p$, the $k^{th}$ diagonal entry of the matrix $(\bm H_{n}-\tilde{\bm H}_{n})^T(\bm X^T_n\bm X_n)^{-1}(\bm H_{n}-\tilde{\bm H}_{n})$ is given by
\begin{eqnarray}
\lefteqn{\int_0^1(C_{k}(t)-\tilde C_{k}(t))(\bm N(t))^Tg(t)dt\,{({\bm X^T_n\bm X_n})}^{-1}\int_0^1(C_{k}(t)-\tilde C_{k}(t))(\bm N(t))g(t)dt}\nonumber\\
&=&\frac{1}{n}\int_0^1(C_{k}(t)-\tilde C_{k}(t))(\bm N(t))^Tg(t)dt\,{\left({\bm X^T_n\bm X_n}/{n}\right)}^{-1}\nonumber\\
&&\times\int_0^1(C_{k}(t)-\tilde C_{k}(t))\bm N(t)g(t)dt\nonumber\\
&\asymp&\frac{k_n}{n}\int_0^1(C_{k}(t)-\tilde C_{k}(t))(\bm N(t))^Tg(t)dt\int_0^1(C_{k}(t)-\tilde C_{k}(t))\bm N(t)g(t)dt\nonumber\\
&\lesssim&\frac{1}{nk_n},\nonumber
\end{eqnarray}
the last step following by the application of the Cauchy-Schwarz inequality and the facts that $\sup\{|C_{k}(t)-\tilde C_{k}(t)|:t\in[0,1]\}=O(k_n^{-1})$ and $\int_0^1\|\bm N(t)\|^2dt\leq 1$. Thus, the eigenvalues of $(\bm H_{n}-\tilde{\bm H}_{n})^T(\bm X^T_n\bm X_n)^{-1}(\bm H_{n}-\tilde{\bm H}_{n})$ are of the order $(nk_n)^{-1}$ or less.
Hence,
\begin{eqnarray}
n\bm H^T_{n}(\bm X^T_n\bm X_n)^{-1}\bm H_{n}&\stackrel{P_0}\rightarrow&\bm J^{-1}_{\bm\theta_0}\int_0^1\left(\dot{f}_{\bm\theta_0}(t)\right)^T\dot{f}_{\bm\theta_0}(t)g(t)dt\,\left(\bm J^{-1}_{\bm\theta_0}\right)^T.\nonumber
\end{eqnarray}
Thus, the eigenvalues of $\bm\Sigma_n$ are stochastically bounded. Now
note that
\begin{eqnarray}
\E(\bm\mu_n|\bm X)&=&\sqrt{n}\bm H^T_n(\bm X^T_n\bm X_n)^{-1}\bm X^T_nf_0(\bm X)-\sqrt{n}\bm J^{-1}_{\bm\theta_0}\bm\Gamma(f_0)\nonumber\\
&=&\sqrt{n}\bm H^T_n(\bm X^T_n\bm X_n)^{-1}\bm X^T_n(f_0(\bm X)-\bm X_n\bm\beta^*)\nonumber\\
&&+\sqrt{n}\int_0^1\bm C(t)(\bm N^T(t)\bm\beta^*-f_0(t))g(t)dt.\nonumber
\end{eqnarray}
Using the Cauchy-Schwarz inequality and \eqref{spldis}, we get
\begin{eqnarray}
\|\E(\bm\mu_n|\bm X)\|&\lesssim&\sqrt{n}\,\mathrm{maxeig}(\bm H^T_n(\bm X^T_n\bm X_n)^{-1}\bm H_n)^{1/2}\sqrt{n}k^{-m}_n+\sqrt{n}k^{-m}_n\nonumber\\
&=&O_{P_0}(\sqrt{n}k^{-m}_n)=o_{P_0}(1).\nonumber
\end{eqnarray}
Thus, $Z_n:=\|\E(\bm\mu_n|\bm X)\|^2+\mathrm{maxeig}(\Var(\bm\mu_n|\bm X))$ is stochastically bounded. Given $M>0$, there exists $L>0$ such that $\sup_nP_0(Z_n>L)<M^{-2}$. Hence for all $n$, $P_0\left(\|\bm\mu_n\|>M\right)$ is bounded above by $M^{-2}\E_0\left[\E\left(\|\bm\mu_n\|^2|\bm X\right)\mathrm{l}\!\!\!1\{Z_n\leq L\}\right]+P_0(Z_n>L)$ which is less than or equal to $(L+1)/M^2$. Hence, $\bm\mu_n$ is stochastically bounded.
\end{proof}
In the next lemma we establish the posterior consistency of $\sigma^2$.


\begin{lemma}
For all $\epsilon>0$, we have $\Pi^*_n(|\sigma^2-\sigma^2_0|>\epsilon|\bm X,\bm Y)=o_{P_0}(1)$.
\end{lemma}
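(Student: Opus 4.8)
The plan is to integrate out $\bm\beta$ and exploit conjugacy so that the marginal posterior of $\sigma^2$ becomes an inverse gamma law whose parameters can be analyzed directly. Given $\sigma^2$, the prior $\bm\beta\sim N(\bm0,\sigma^2n^2k_n^{-1}\bm I)$ in \eqref{prior} and the working Gaussian likelihood are conjugate, so the working marginal of $\bm Y$ given $\sigma^2$ is $N(\bm0,\sigma^2\bm\Sigma_0)$ with $\bm\Sigma_0=\bm I_n+n^2k_n^{-1}\bm X_n\bm X_n^T$ free of $\sigma^2$. Integrating $\bm\beta$ out of the joint posterior then shows that $\sigma^2\mid\bm X,\bm Y$ is inverse gamma with shape $n/2+a$ and scale $b+S_n/2$, where $S_n=\bm Y^T\bm\Sigma_0^{-1}\bm Y$; the determinant factor $\det(\bm\Sigma_0)^{-1/2}$ does not involve $\sigma^2$ and cancels. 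It therefore suffices to show $n^{-1}S_n\stackrel{P_0}\rightarrow\sigma^2_0$, since then the posterior mean $(b+S_n/2)/(n/2+a-1)$ converges to $\sigma^2_0$ and the posterior variance is of order $n^{-1}$, after which Chebyshev's inequality applied to the inverse gamma law yields the claim.

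First I would rewrite $S_n$ via the Woodbury identity as $S_n=\|\bm Y\|^2-\bm Y^T\bm X_n(k_nn^{-2}\bm I+\bm X_n^T\bm X_n)^{-1}\bm X_n^T\bm Y$, so that $S_n$ is the residual sum of squares of the ridge fit in \eqref{posterior}. Because $k_nn^{-2}$ is negligible against the eigenvalues of $\bm X_n^T\bm X_n$, which are of order $nk_n^{-1}$ by Lemma 12, I would bound the difference between the two inverses $(\bm X_n^T\bm X_n)^{-1}$ and $(k_nn^{-2}\bm I+\bm X_n^T\bm X_n)^{-1}$ in operator norm by $O(k_n^3/n^4)$, and combine it with $\|\bm X_n^T\bm Y\|^2=O_{P_0}(n^2/k_n)$ to conclude that replacing the ridge inverse by $(\bm X_n^T\bm X_n)^{-1}$ changes $n^{-1}S_n$ by only $O_{P_0}(k_n^2/n^3)=o_{P_0}(1)$. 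Thus up to $o_{P_0}(1)$ one has $n^{-1}S_n=n^{-1}\bm Y^T(\bm I_n-\bm P_n)\bm Y$ with $\bm P_n=\bm X_n(\bm X_n^T\bm X_n)^{-1}\bm X_n^T$ the projection onto the $(k_n+m-1)$-dimensional column space of $\bm X_n$.

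Writing $\bm Y=f_0(\bm X)+\bm\varepsilon$ and $f_0(\bm X)=\bm X_n\bm\beta^*+\bm\rho$ with $\sup_i|\rho_i|=O(k_n^{-m})$ by \eqref{spldis}, and using $(\bm I_n-\bm P_n)\bm X_n\bm\beta^*=\bm0$, I would decompose $\bm Y^T(\bm I_n-\bm P_n)\bm Y=\bm\rho^T(\bm I_n-\bm P_n)\bm\rho+2\bm\rho^T(\bm I_n-\bm P_n)\bm\varepsilon+\bm\varepsilon^T(\bm I_n-\bm P_n)\bm\varepsilon$. The first term is at most $\|\bm\rho\|^2=O(nk_n^{-2m})$, so after $n^{-1}$ scaling it is $O(k_n^{-2m})$; the cross term is $O(k_n^{-m})$ after $n^{-1}$ scaling by the Cauchy--Schwarz inequality; and for the last term, $n^{-1}\bm\varepsilon^T\bm\varepsilon\stackrel{P_0}\rightarrow\sigma^2_0$ by the weak law of large numbers (the $\varepsilon_i$ have a finite moment generating function), while $n^{-1}\bm\varepsilon^T\bm P_n\bm\varepsilon=O_{P_0}(k_n/n)$ since $\E_0(\bm\varepsilon^T\bm P_n\bm\varepsilon\mid\bm X)=\sigma^2_0(k_n+m-1)$. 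As $k_n\ll n^{1/2}$, all error terms vanish and $n^{-1}S_n\stackrel{P_0}\rightarrow\sigma^2_0$.

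Finally I would assemble these estimates: the posterior mean of $\sigma^2$ tends to $\sigma^2_0$, and its posterior variance, equal to $(b+S_n/2)^2/[(n/2+a-1)^2(n/2+a-2)]\asymp\sigma_0^4/n$, tends to $0$ in $P_0$-probability. Chebyshev's inequality applied to the inverse gamma posterior then gives $\Pi^*_n(|\sigma^2-\sigma^2_0|>\epsilon\mid\bm X,\bm Y)=o_{P_0}(1)$ for every $\epsilon>0$. The main obstacle is the control of $n^{-1}S_n$ in the preceding paragraph, namely showing that both the ridge regularization and the spline approximation bias contribute only $o_{P_0}(1)$; this is where Lemma 12 and \eqref{spldis} are essential. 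Only the upper bound $k_n\ll n^{1/2}$ and the approximation rate \eqref{spldis} are genuinely needed here, the remaining conditions on $m$ and $k_n$ being assumed for uniformity with the other RKTB lemmas.
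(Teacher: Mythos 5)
Your proposal is correct and follows essentially the same route as the paper: both identify the marginal posterior of $\sigma^2$ as inverse gamma with shape $n/2+a$ and scale $b+S_n/2$, show $n^{-1}S_n\stackrel{P_0}\rightarrow\sigma^2_0$ by separating the ridge-versus-projection correction (order $k_n^2/n^3$) from the spline approximation bias and the error quadratic form, and finish with the posterior variance bound and Chebyshev's inequality. The only departures are cosmetic: you derive the inverse gamma law by marginalizing $\bm\beta$ via the Woodbury identity rather than reading it off the joint density, and you control $\bm\varepsilon^T\bm P_n\bm\varepsilon$ by Markov's inequality on its conditional mean instead of the paper's fourth-moment variance calculation for $\bm\varepsilon^T(\bm I_n-\bm P_n)\bm\varepsilon$ --- both harmless simplifications of the same argument.
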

\begin{proof}
The joint density of $\bm Y$, $\bm\beta$ and $\sigma^2$ is proportional to
\begin{eqnarray}
\lefteqn{\sigma^{-n}\exp\left\{-\frac{1}{2\sigma^2}(\bm Y-\bm X_n\bm\beta)^T(\bm Y-\bm X_n\bm\beta)\right\}}\nonumber\\
&&\times\sigma^{-k_n-m+1}\exp\left\{-\frac{1}{2n^2k^{-1}_n\sigma^2}\bm\beta^T\bm\beta\right\}\exp\left(-\frac{b}{\sigma^2}\right)(\sigma^2)^{-a-1},\nonumber
\end{eqnarray}
which implies that the posterior distribution of $\sigma^2$ is inverse gamma with shape parameter $n/2+a$ and scale parameter $$\frac{1}{2}\left\{\bm Y^T\bm Y-\bm Y^T\bm X_n(\bm X^T_n\bm X_n+k_nn^{-2}\bm I_{k_n+m-1})^{-1}\bm X^T_n\bm Y\right\}+b.$$ Hence, the posterior mean of $\sigma^2$ is given by
\begin{eqnarray}
\E(\sigma^2|\bm X,\bm Y)=\frac{\frac{1}{2}\left\{\bm Y^T\bm Y-\bm Y^T\bm X_n(\bm X^T_n\bm X_n+k_nn^{-2}\bm I_{k_n+m-1})^{-1}\bm X^T_n\bm Y\right\}+b}{n/2+a-1},\nonumber
\end{eqnarray}
which behaves like the $n^{-1}\left({\bm Y^T\bm Y-\bm Y^T\bm X_n(\bm X^T_n\bm X_n+k_nn^{-2}\bm I_{k_n+m-1})^{-1}\bm X^T_n\bm Y}\right)$ asymptotically. The later can be written as
\begin{eqnarray}
n^{-1}\left({\bm Y^T(\bm I_{n}-\bm P_{\bm X_n})\bm Y+\bm Y^T(\bm P_{\bm X_n}-\bm X_n(\bm X^T_n\bm X_n+k_nn^{-2}\bm I_{k_n+m-1})^{-1}\bm X^T_n)\bm Y}\right),\nonumber
\end{eqnarray}
where $\bm P_{\bm X_n}=\bm X_n(\bm X^T_n\bm X_n)^{-1}\bm X^T_n$. We will show that $n^{-1}\bm Y^T(\bm I_n-\bm P_{\bm X_n})\bm Y\stackrel{P_0}\rightarrow\sigma^2_0$ and $n^{-1}\bm Y^T(\bm P_{\bm X_n}-\bm X_n(\bm X^T_n\bm X_n+k_nn^{-2}\bm I_{k_n+m-1})^{-1}\bm X^T_n)\bm Y=o_{P_0}(1)$ and hence $\E(\sigma^2|\bm X,\bm Y)\stackrel{P_0}\rightarrow\sigma^2_0$. Using $\bm Y=f_0(\bm X)+\bm\varepsilon$, we note that
\begin{eqnarray}
\bm Y^T(\bm I_n-\bm P_{\bm X_n})\bm Y&=&\bm\varepsilon^T(\bm I_n-\bm P_{\bm X_n})\bm\varepsilon+f_0(\bm X)^T(\bm I_n-\bm P_{\bm X_n})f_0(\bm X)\nonumber\\
&&+2\bm\varepsilon^T(\bm I_n-\bm P_{\bm X_n})f_0(\bm X).\nonumber
\end{eqnarray}
We show that $\bm\varepsilon^T(\bm I_n-\bm P_{\bm X_n})\bm\varepsilon/n\stackrel{P_0}\rightarrow\sigma^2_0,\,n^{-1}f_0(\bm X)^T(\bm I_n-\bm P_{\bm X_n})f_0(\bm X)=o_{P_0}(1)$ and $n^{-1}\bm\varepsilon^T(\bm I_n-\bm P_{\bm X_n})f_0(\bm X)=o_{P_0}(1)$. Now, $\E_0\left(\bm\varepsilon^T(\bm I_n-\bm P_{\bm X_n})\bm\varepsilon/n\right)\rightarrow\sigma_0^2$ as $n\rightarrow\infty$. Also,
\begin{eqnarray}
\Var_0\left(\bm\varepsilon^T(\bm I_n-\bm P_{\bm X_n})\bm\varepsilon/n\right)&=&n^{-2}\left(\E_0\Var\left(\bm\varepsilon^T(\bm I_n-\bm P_{\bm X_n})\bm\varepsilon|\bm X\right)\right.\nonumber\\
&&\left.+\Var_0\E(\bm\varepsilon^T(\bm I_n-\bm P_{\bm X_n})\bm\varepsilon|\bm X)\right).\nonumber
\end{eqnarray}
Now
\begin{eqnarray}
\Var(\bm\varepsilon^T(\bm I_n-\bm P_{\bm X_n})\bm\varepsilon|\bm X)&=&(\mu_4-\sigma^2_0)(n-k_n-m+1)\nonumber\\
\E(\bm\varepsilon^T(\bm I_n-\bm P_{\bm X_n})\bm\varepsilon|\bm X)&=&\sigma^2_0(n-k_n-m+1),\nonumber
\end{eqnarray}
$\mu_4$ being the fourth order central moment of $\varepsilon_i$ for $i=1,\ldots,n$. Hence, $$\Var_0\left(\bm\varepsilon^T(\bm I_n-\bm P_{\bm X_n})\bm\varepsilon/n\right)\rightarrow0\, \text{as}\, n\rightarrow\infty.$$ Thus, $\bm\varepsilon^T(\bm I_n-\bm P_{\bm X_n})\bm\varepsilon/n\stackrel{P_0}\rightarrow\sigma^2_0$. We can write for $\bm\beta^*$ satisfying \eqref{spldis}
\begin{eqnarray}
f_0(\bm X)^T(\bm I_n-\bm P_{\bm X_n})f_0(\bm X)&=&(f_0(\bm X)-\bm X_n\bm\beta^*)^T(\bm I_n-\bm P_{\bm X_n})(f_0(\bm X)-\bm X_n\bm\beta^*)\nonumber\\
&\lesssim&nk^{-2m}_n,\nonumber
\end{eqnarray}
since $(\bm I_n-\bm P_{\bm X_n})\bm X_n=\bm 0$. Using the Cauchy-Schwarz inequality, we get
\begin{eqnarray}
\left|n^{-1}\bm\varepsilon^T(\bm I_n-\bm P_{\bm X_n})f_0(\bm X)\right|&=&\left|n^{-1}\bm\varepsilon^T(\bm I_n-\bm P_{\bm X_n})(f_0(\bm X)-\bm X_n\bm\beta^*)\right|\nonumber\\
&\leq&\sqrt{\bm\varepsilon^T\bm\varepsilon/n}k^{-m}_n=o_{P_0}(1).\nonumber
\end{eqnarray}
By the Binomial Inverse Theorem,
\begin{eqnarray}
\bm P_{\bm X_n}-\bm X_n(\bm X^T_n\bm X_n+k_nn^{-2}\bm I_{k_n+m-1})^{-1}\bm X^T_n&=&k_nn^{-2}\bm X_n(\bm X^T_n\bm X_n)^{-1}\nonumber\\
&&\times\left(\bm I_{k_n+m-1}+(\bm X^T_n\bm X_n)^{-1}\frac{k_n}{n^2}\right)^{-1}\nonumber\\
&&\times(\bm X^T_n\bm X_n)^{-1}\bm X^T_n\nonumber
\end{eqnarray}
whose eigenvalues are of the order
$k_nn^{-2}{n}{k^{-1}_n}{k^2_n}{n^{-2}}={k^2_n}{n^{-3}}$.
Hence, the random variable $\bm Y^T\left(\bm P_{\bm X_n}-\bm X_n(\bm X^T_n\bm X_n+k_nn^{-2}\bm I_{k_n+m-1})^{-1}\bm X^T_n\right)\bm Y/n$ converges in $P_0$-probability to $0$ and $\E(\sigma^2|\bm X,\bm Y)\stackrel{P_0}\rightarrow\sigma^2_0$. Also, $$\Var(\sigma^2|\bm X, \bm Y)=\left(\E(\sigma^2|\bm X,\bm Y)\right)^2/(n/2+a-2)=o_{P_0}(1).$$ By using the Markov's inequality, we finally get $\Pi^*_n(|\sigma^2-\sigma^2_0|>\epsilon|\bm X,\bm Y)=o_{P_0}(1)$ for all $\epsilon>0$.
\end{proof}

\section{Appendix}

The following result was used to prove Lemmas 6, 7 and 10.
\begin{lemma}
For any $0\leq r\leq m-2$, there exist constants $L_{\max}>L_{\min}>0$ such that uniformly in $t\in[0,1]$, $$\frac{L_{\min}\sigma^2k^{2r+1}_n}{n}\left(1+o_{P_0}(1)\right)\leq\left(\bm N^{(r)}(t)\right)^T\left(\bm X^T_n\bm X_n\right)^{-1}\bm N^{(r)}(t)\leq\frac{L_{\max}\sigma^2k^{2r+1}_n}{n}\left(1+o_{P_0}(1)\right).$$
\end{lemma}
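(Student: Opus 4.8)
The plan is to replace the random matrix $\bm X^T_n\bm X_n$ by the deterministic weighted Gram matrix $\bm G_n:=\int_0^1\bm N(t)\bm N^T(t)g(t)dt$ and then read off the two bounds from the banded structure of $\bm G_n$ and the scaling of B-spline derivatives. Exactly as in the proof of Lemma 10, Donsker's theorem gives $\sup_{t\in[0,1]}|Q_n(t)-G(t)|=O_{P_0}(n^{-1/2})$, while $\sup_{t\in[0,1]}\|\bm N(t)\|^2\leq1$; combined with the fact (Lemma 6.1 of \citet{zhou1998local}) that both the smallest and the largest eigenvalue of $\bm G_n$ are of exact order $k^{-1}_n$, this yields $\|\bm X^T_n\bm X_n/n-\bm G_n\|=O_{P_0}(n^{-1/2})$ and hence, using $\|\bm R^{-1}-\bm S^{-1}\|\leq\|\bm S^{-1}\|\|\bm R-\bm S\|\|\bm S^{-1}\|$, the relative perturbation $\|\bm G_n^{-1}\|\cdot\|\bm X^T_n\bm X_n/n-\bm G_n\|=O_{P_0}(k_nn^{-1/2})=o_{P_0}(1)$ because $k_n\ll n^{1/2}$. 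Propagating this through the quadratic form (the induced error is a factor $O_{P_0}(k_nn^{-1/2})$ smaller than the main term, uniformly in $t$) reduces the lemma to showing that, uniformly in $t\in[0,1]$, $(\bm N^{(r)}(t))^T\bm G_n^{-1}\bm N^{(r)}(t)$ lies between two constant multiples of $k^{2r+1}_n$; the extra factor $n^{-1}$ and the $\sigma^2$ (a fixed positive constant, absorbed into $L_{\min},L_{\max}$) then appear automatically.

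For the reduced problem I would use two structural facts about B-splines. First, since the knots are pseudo-uniform and $g$ is bounded away from $0$ and $\infty$, $\bm G_n$ is symmetric positive definite with bandwidth $m$ and eigenvalues confined to $[c_1k^{-1}_n,c_2k^{-1}_n]$, so $c_2^{-1}k_n\leq\mathrm{mineig}(\bm G_n^{-1})\leq\mathrm{maxeig}(\bm G_n^{-1})\leq c_1^{-1}k_n$. Second, the $r$-th derivative satisfies $\|\bm N^{(r)}(t)\|^2\asymp k^{2r}_n$ uniformly in $t$. The upper bound here follows from the de Boor differentiation recursion $N^{(1)}_{j,m}=(m-1)[N_{j,m-1}/(\xi_{j+m-1}-\xi_j)-N_{j+1,m-1}/(\xi_{j+m}-\xi_{j+1})]$: consecutive knot gaps are of order $k^{-1}_n$, so each differentiation inflates magnitudes by a factor of order $k_n$, and since at most $m$ B-splines are supported at any $t$ and each is bounded, iterating $r$ times gives $\|\bm N^{(r)}(t)\|^2\lesssim k^{2r}_n$.

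Combining these, the upper bound is immediate: $(\bm N^{(r)}(t))^T\bm G_n^{-1}\bm N^{(r)}(t)\leq c_1^{-1}k_n\|\bm N^{(r)}(t)\|^2\leq L_{\max}k^{2r+1}_n$. The lower bound reduces, via $\bm G_n^{-1}\geq c_2^{-1}k_n\bm I$, to the uniform inequality $\|\bm N^{(r)}(t)\|^2\geq c\,k^{2r}_n$, and this is the \emph{main obstacle}. For $r=0$ it is trivial from the partition of unity $\sum_jN_j(t)\equiv1$, which forces $\|\bm N(t)\|^2\geq1/m$; but for $r\geq1$ the derivative B-splines neither sum to a constant nor are individually bounded below, so one must preclude their simultaneous near-vanishing. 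I would settle this by appealing to the local asymptotic theory of \citet{zhou1998local}, in which precisely the equivalence $(\bm N^{(r)}(t))^T(\bm X^T_n\bm X_n)^{-1}\bm N^{(r)}(t)\asymp k^{2r+1}_n/n$ is established: the banded structure of $\bm G_n$ yields, through Demko-type exponential decay of the entries of $\bm G_n^{-1}$, together with the de Boor--Fix dual functionals certifying local linear independence of the active derivative B-splines, the matching lower constant. Once this uniform lower bound is secured, the two inequalities combine with the reduction of the first paragraph to produce the stated bounds, with the $(1+o_{P_0}(1))$ factor inherited from the passage between $\bm X^T_n\bm X_n/n$ and $\bm G_n$.
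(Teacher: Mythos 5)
Your proposal is correct in substance, but it takes a genuinely different route from the paper: the paper offers no argument at all for this lemma, stating only that ``the proof is implicit in Lemma 5.4 of \citet{zhou2000derivative},'' whereas you give an explicit reduction. Your decomposition --- replace $\bm X_n^T\bm X_n/n$ by the deterministic Gram matrix $\bm G_n=\int_0^1\bm N(t)\bm N^T(t)g(t)\,dt$ via the Donsker-type perturbation already used in Lemma 10, control the relative error by $\|\bm G_n^{-1}\|\,\|\bm X_n^T\bm X_n/n-\bm G_n\|=O_{P_0}(k_nn^{-1/2})=o_{P_0}(1)$, and then squeeze the quadratic form between $\mathrm{mineig}(\bm G_n^{-1})\|\bm N^{(r)}(t)\|^2$ and $\mathrm{maxeig}(\bm G_n^{-1})\|\bm N^{(r)}(t)\|^2$ --- is sound, and it has the merit of making visible exactly where the hypotheses enter: $k_n\ll n^{1/2}$ for the random-to-deterministic step, pseudo-uniform knots and $g$ bounded away from $0$ and $\infty$ for the eigenvalue bounds $\asymp k_n^{-1}$ of $\bm G_n$ (Lemma 6.1 of \citet{zhou1998local}), and the B-spline differentiation recursion for $\|\bm N^{(r)}(t)\|^2\lesssim k_n^{2r}$. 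You also correctly isolate the one nontrivial ingredient, the uniform lower bound $\|\bm N^{(r)}(t)\|^2\gtrsim k_n^{2r}$, which you then defer to the literature; note, however, that your citation at that point invokes the full equivalence $(\bm N^{(r)}(t))^T(\bm X_n^T\bm X_n)^{-1}\bm N^{(r)}(t)\asymp k_n^{2r+1}/n$, which subsumes the whole lemma, so as written the lower half of your proof is no more self-contained than the paper's citation. It can be closed elementarily with the de Boor--Fix functionals you mention: writing $(x-t)^r=\sum_j\lambda_j\bigl((\cdot-t)^r\bigr)N_j(x)$ and differentiating $r$ times at $x=t$ gives $r!=\sum_j\lambda_j\bigl((\cdot-t)^r\bigr)N_j^{(r)}(t)$, and since the dual coefficients of the active basis functions are $O(k_n^{-r})$ (each $\psi_j$ is a product of $m-1$ factors of size $O(k_n^{-1})$ near $t$) while only $m$ terms are active, this forces $\|\bm N^{(r)}(t)\|\gtrsim k_n^{r}$ uniformly in $t$; no Demko-type decay of $\bm G_n^{-1}$ is actually needed. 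With that paragraph added, your argument is a complete, self-contained proof, which the paper's one-line citation is not.
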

The proof is implicit in Lemma 5.4 of \citet{zhou2000derivative}.
\bibliographystyle{chicago}
\bibliography{ref}

\end{document}